\def\ex{\mathbb{E}}
\def\d{\mathcal{D}}
\def\cc{\mathcal{C}}
\def\e{\mathcal{E}}
\def\r{\mathcal{R}}
\def\ps{P_S}
\def\pra{P_R}
\def\pr{P_r}
\def\a{\alpha}
\def\b{\beta}
\def\lm{\lambda}
\def\l1{L_1}
\def\l2{L_2}
\def\nr{N_r}
\def\nd{N_d}
\def\n{N}
\def\nt{\tilde{N}}
\def\sm{\text{-}}
\def\dt{\text{det}}
\def\lmm{\Lambda}
\def\l1{L_1}
\def\l2{L_2}
\def\nt{\tilde{N}}
\def\sm{\text{-}}
\newtheorem{theorem}{\bf{Theorem}}[section]
\newtheorem{lemma}{\bf{Lemma}}[section]
\newtheorem{proposition}{\bf{Proposition}}[section]
\newtheorem{definition}{\bf{Definition}}[section]
\newtheorem{remark}{\bf{Remark}}[section]
\newtheorem{proof}{\bf{Proof}}[section]
\begin{document}

\title{Stabilization of Linear Systems Over Gaussian Networks}
\author{\authorblockN{Ali A.~Zaidi\authorrefmark{1}, Tobias J. Oechtering\authorrefmark{1}, Serdar Y\"{u}ksel\authorrefmark{2} and Mikael Skoglund\authorrefmark{1}}\\
\authorblockA{\authorrefmark{1} KTH Royal Institute of Technology, Stockholm, Sweden\\
\authorblockA{\authorrefmark{2}Department of Mathematics and Statistics,
Queen's University, Canada}}
\thanks{Parts of this work were presented in IEEE ICCA June 2010, Reglerm\"{o}te June 2010, and ACC, June 2011.}}
\maketitle
\vspace{-1cm}
\begin{abstract}
  The problem of remotely stabilizing a noisy linear time invariant
  plant over a Gaussian relay network is addressed. The network is
  comprised of a sensor node, a group of relay nodes and a remote
  controller. The sensor and the relay nodes operate subject to an
  average transmit power constraint and they can cooperate to
  communicate the observations of the plant's state to the remote
  controller. The communication links between all nodes are modeled as
  Gaussian channels. Necessary as well as sufficient conditions for
  mean-square stabilization over various network topologies are
  derived. The sufficient conditions are in general obtained using
  delay-free linear policies and the necessary conditions are obtained
  using information theoretic tools. Different settings where linear policies are optimal, asymptotically optimal (in certain parameters of the system) and suboptimal have been identified. For the case with noisy multi-dimensional sources controlled over scalar channels, it is shown that linear time varying policies lead to minimum capacity requirements, meeting the fundamental lower bound. For the case with noiseless sources and parallel channels, non-linear policies which meet the lower bound have been identified.
\end{abstract}

\vspace{-.4cm}
\section{Introduction}
The emerging area of networked control systems has gained significant attention in recent years due to its potential applications in many areas such as machine-to-machine communication for security, surveillance, production, building management, and traffic control. The idea of controlling dynamical systems over communication networks is supported by the rapid advance of wireless technology and the development of cost-effective and energy efficient devices (sensors), capable of sensing, computing, and transmitting. This paper considers a setup in which a sensor node communicates the observations of a linear dynamical system (plant) over a network of wireless nodes to a remote controller in order to stabilize the system in closed-loop. The wireless nodes have transmit and receive capability and we call them \emph{relays}, as they relay the plant's state information to the remote controller. We assume a transmit power constraint on the sensor and relays, and the wireless links between all agents (sensor, relays, and controller) are modeled as Gaussian channels. The objective is to study stabilizability of the plant over Gaussian networks.


\subsection{Problem Formulation}
Consider a discrete linear time invariant system, whose state equation is given by
\begin{eqnarray}
   X_{t+1}&=&A X_t + B U_t + W_t, \label{eq:stateEquation}
\end{eqnarray}
where $X_t\in\mathbb{R}^n,U_t\in\mathbb{R}^m$, and
$W_t\in\mathbb{R}^n$ are state, control, and plant noise
The initial state $X_0$ is a random variable with bounded differential
entropy $|h(X_0)|<\infty$ and a given covariance matrix
$\lmm_0$. The plant noise $\{W_t\}$ is a zero mean white Gaussian
noise sequence with variance $K_W$ and it is assumed to be independent
of the initial state $X_0$. The matrices $A$ and $B$ are of appropriate dimensions and the
pair $(A,B)$ is controllable. Let $\{\lm_1,\lm_2,\dots,\lm_n\}$ denote
the eigenvalues of the system matrix $A$. Without loss of generality
we assume that all the eigenvalues of the system matrix are outside
the unit disc, i.e., $|\lm_i|\geq 1$. The unstable modes can
be decoupled from the stable modes by a similarity transformation. If
the system in \eqref{eq:stateEquation} is one-dimensional then $A$ is
scalar and we use the notation $A=\lm$. We consider a remote control
setup, where a sensor node observes the state process and transmits it
to a remotely situated controller over a network of relay\footnote{A
  relay is a communication device whose sole purpose is to support
  communication from the information source to the destination. In our
  setup the relay nodes cooperate to communicate the state process
  from sensor to the remote controller. If the system design objective
  is to replace wired connections, then relaying is a vital approach
  to communicate over longer distances.} nodes as shown in
Fig. \ref{fig:SystemDiagram}. The communication links between nodes
are modeled as white Gaussian channels, which is why we refer to it as
a Gaussian network. In order to communicate the observed state value
$X_t$, an encoder $\mathcal{E}$ is lumped with the observer
$\mathcal{O}$ and a decoder $\mathcal{D}$ is lumped with the
controller $\mathcal{C}$. In addition there are $L$ relay nodes
$\{\mathcal{R}_i\}^L_{i=1}$ within the channel to support
communication from $\mathcal{E}$ to $\mathcal{D}$. At any time instant
$t$, $S_{e,t}$ and $R_t$ are the input and the output of the network
and $U_t$ is the control action. Let $f_t$ denote the observer/encoder
policy such that $S_{e,t}=f_t(X_{[0,t]},U_{[0,t-1]})$, where
$X_{[0,t]}:= \{X_0, X_1,\dots,X_t\}$ and we have the following
average transmit power constraint $\lim_{T\rightarrow \infty}\frac{1}{T}\sum^{T-1}_{t=0}\mathbb{E}[S^2_{e,t}]\leq P_S$. Further let $\pi_t$ denote the decoder/controller policy, then
$U_t=\pi_t\left(R_{[0,t]}\right)$. The objective in this paper is to
find conditions on the system matrix $A$ so that the plant in
(\ref{eq:stateEquation}) can be mean square stabilized over a given
Gaussian network.

\begin{definition}
A system is said to be \emph{mean square stable} if there exists a constant $M<\infty$ such that $\ex[\|X_t\|^2]<M$ for all $t$.
\end{definition}

\begin{figure}[!t]
    \centering
    \psfrag{p}[][][3]{\begin{sideways}Plant\end{sideways}}
    \psfrag{oe}[][][3]{$\mathcal{O}/\mathcal{E}$}
    \psfrag{dc}[][][3]{$\mathcal{D}/\mathcal{C}$}
    \psfrag{rn}[][][2]{$\mathcal{R}_L$}
    \psfrag{r1}[][][2]{$\mathcal{R}_1$}
    \psfrag{r2}[][][2]{$\mathcal{R}_2$}
    \psfrag{r3}[][][2]{$\mathcal{R}_3$}
    \psfrag{ch}[][][2]{AWGN Relay Channel}
    \psfrag{nfch}[][][2]{\begin{sideways}Noiseless Feedback Communication Channel\end{sideways}}
    \psfrag{x}[][][2.5]{$X_t$}
    \psfrag{st}[][][2.5]{$S_{e,t}$}
    \psfrag{rt}[][][2.5]{$R_t$}
    \psfrag{ut}[][][2.5]{$U_t$}
    \resizebox{7 cm}{!}{\epsfbox{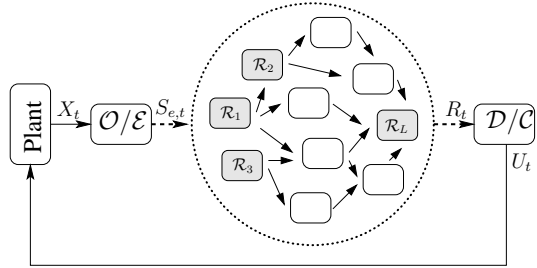}}
    \vspace{-.2cm}
  \caption{The unstable plant has to be controlled over a Gaussian relay network.}\label{fig:SystemDiagram}
\end{figure}


\vspace{-.3cm}
\subsection{Literature Review}
Important contributions to control over communication channels include \cite{BansalBasar89,Baillieul99,WongBrockett99,NairEvans00,EliaMitter01,NairEvans04,Elia04,MatveevSavkin04,TatikondaMitter04b,TatikondaMitter04c,MartinElia06,SahaiMitter06,MatveevSavkin07,BraslavskyFreudenberg07,MartinsDahleh08,MiddletonBraslavsky09,YukselTatikonda09,YukselBasar10,FarhadiAhmed11}.
The problem of remotely controlling dynamical systems over communication channels is studied with methods from stochastic control theory and information theory. The seminal paper by Bansal and Ba\c{s}ar \cite{BansalBasar89} used fundamental information theoretic arguments to obtain optimal policies for LQG control of a first order plant over a point to point Gaussian channel. Minimum rate requirements for stabilizability of a noiseless scalar plant were first established in \cite{Baillieul99,WongBrockett99} followed by \cite{NairEvans00}. Further rate theorems for stabilization of linear plants over some discrete and continuous alphabet channels can be found in \cite{TatikondaMitter04c,BraslavskyFreudenberg07,MartinsDahleh08,CharalambousFarhadi08,MiddletonBraslavsky09,YiqianChen09,YukselBasar10,FreudenbergSolo10,ShuMiddleton11,YukselIT12,SilvaQuevedo10,VargasSilva12}. The papers \cite{BansalBasar89,TatikondaMitter04b,TatikondaMitter04c,BraslavskyFreudenberg07,MiddletonBraslavsky09,YiqianChen09,FreudenbergSolo10,YukselTatikonda09,ShuMiddleton11,YukselBasar10,SilvaQuevedo10,VargasSilva12} addressing control over Gaussian channels are more relevant to our work. In \cite{BansalBasar89} linear sensing and control policies are shown to be optimal for the LQG control of a first order linear plant over a point-to-point Gaussian channel. A necessary condition for stabilization relating eigenvalues of the plant to the capacity of the Gaussian channel first appeared in \cite{TatikondaMitter04b,TatikondaMitter04c}. Some important contributions on stabilization over Gaussian channels with average transmit power constraints have been made in \cite{BraslavskyFreudenberg07,MiddletonBraslavsky09,FreudenbergSolo10,YiqianChen09,ShuMiddleton11,KumarLaneman11,VargasSilva12}. In \cite{BraslavskyFreudenberg07} sufficient conditions for stabilization of both continuous time and discrete time multi-dimensional plants over a scalar white Gaussian channel were obtained using linear time invariant (LTI) sensing and control schemes. It was shown in \cite{BraslavskyFreudenberg07,FreudenbergSolo10} that under some assumptions there is no loss in using LTI schemes for stabilization, that is the use of non-linear time varying schemes does not allow stabilization over channels with lower signal-to-noise ratio. The stability results were extended to a colored Gaussian channel in \cite{MiddletonBraslavsky09}. In \cite{YukselBasar10} the authors considered noisy communication links between both sensor--controller and controller--actuator and presented necessary and sufficient conditions for mean square stability. Stabilization of noiseless LTI plants over parallel white Gaussian channels subject to transmit power constraint has been studied in \cite{YiqianChen09,ShuMiddleton11,KumarLaneman11,VargasSilva12}. The paper \cite{YiqianChen09} considers output feedback stabilization and \cite{ShuMiddleton11} considers state feedback stabilization, and they both derive necessary and sufficient conditions for stability under a total transmit power constraint. The necessary condition derived in \cite{ShuMiddleton11} for mean-square stabilization of discrete time LTI plants over parallel Gaussian channels is not tight in general and its achievability is not guaranteed by LTI schemes. The paper \cite{VargasSilva12} focuses on  mean-square stabilization of two-input two-output system over two parallel Gaussian channels. By restricting the study to LTI schemes and assuming individual power constraint on each channel, the authors derive tight necessary and sufficient conditions for both state feedback and output feedback architectures. Realizing that LTI schemes are not optimal in general for stabilization over parallel channels \cite{ShuMiddleton11},
the paper \cite{KumarLaneman11} proposes a non-linear time invariant scheme for stabilization of a scalar noiseless plant over a parallel Gaussian channel using the idea that independent information should be transmitted on parallel channels \cite{VaishampayanThesis,YukselTatikonda09}. The problem of finding a tight necessary and sufficient condition for stabilization of an $m$-dimensional plant over an $n$-dimensional parallel Gaussian channel is still open, which we investigate in this paper.


As summarized above, the previous works on control over Gaussian channels have mostly focused on situations where there is no intermediate node between the sensor and the remote controller. The problems related to control over Gaussian networks with relay nodes are largely open. Such problems are hard because a relay network can have an arbitrary topology and every node within the network can have memory and can employ any transmit strategy. The papers \cite{Tatikonda03} and \cite{GuptaHassibi09} have derived conditions for stabilization over networks with digital noiseless channels and analog erasure channels respectively, however those results do not apply to noisy networks. In \cite{SahaiMitter06,YukselIT12} moment stability conditions in terms of error exponents have been established. However, even a single letter expression for channel capacity of the basic three-node Gaussian relay channel \cite{InfoBook} is not known in general. In \cite{GastparVetterli05} Gastpar and Vetterli determined capacity of a large Gaussian relay network in the limit as the number of relays tends to infinity. The problem of control over Gaussian relay channels was first introduced in \cite{zaidiICCA10,zaidiReglermote} and further studied in \cite{zaidiACC11,KumarLanemanCDC10}. The papers \cite{zaidiICCA10,zaidiReglermote,zaidiACC11,KumarLanemanCDC10} derived sufficient conditions for mean square stability of a scalar plant by employing linear schemes over Gaussian channels with single relay nodes. In this paper we consider more general setups with multiple relays and multi-dimensional plants. We also derive necessary conditions along with sufficient conditions and further discuss how good linear policies are for various network topologies. In particular this paper makes the following contributions:

\vspace{-.3cm}
\subsection{Main Contributions}
\begin{itemize}
\item In Sec.~\ref{sec:NecConditionGen} we obtain a necessary condition for
mean square stabilization of the linear system in
\eqref{eq:stateEquation} over the general relay network depicted in Fig.~\ref{fig:SystemDiagram}.
\item In Sec.~\ref{sec:Cascade}--\ref{sec:NonOrthogonal} we derive necessary as well as sufficient conditions for
  stabilization over some fundamental network topologies such as
cascade network, parallel network, and non-orthogonal network, which serve as building blocks for a large class of
Gaussian networks (see Figures \ref{fig:CascadeNetwork}, \ref{fig:ParallelNetwork}, \ref{fig:HalfDuplexNetwork}, pp. 7, 10, 13). Necessary conditions are obtained using information
  theoretic tools. Sufficient conditions are obtained using linear
  schemes.
\item Sub-optimality of linear policies is discussed and some insights on optimal schemes are presented. In some cases linear schemes can be
  asymptotically optimal and in some cases exactly optimal.
\item A linear time varying scheme is proposed in Sec.~\ref{sec:multiDimension}, which is optimal for stabilization of noisy multi-dimensional plants over the point-to-point scalar Gaussian channels.
\item The minimum rate required for stabilization of multi-dimensional plants over parallel Gaussian channels is established in Sec. \ref{sec:Parallel}, which is achievable by a non-linear time varying scheme for noiseless plants.
\end{itemize}


\vspace{-.3cm}
\section{Necessary Condition for Stabilization}\label{sec:NecConditionGen}
In the literature \cite{Elia04,MartinsDahleh08,SilvaOstergaard11,YukselIT12}, there exist a variety of information rate inequalities characterizing fundamental limits on the performance of linear systems controlled over communication channels. In the following we state a relationship which gives a necessary condition for mean square stabilization over the general network depicted in Fig.~\ref{fig:SystemDiagram}.
\begin{theorem}\label{thm:NecGeneral}
  If the linear system in \eqref{eq:stateEquation} is mean square
  stable over the Gaussian relay network, then
\begin{align}
  \log\left(|\dt\left(A\right)|\right)\leq\liminf_{T\rightarrow\infty}
  \frac{1}{T} I(\bar{X}_{[0,T-1]} \rightarrow R_{[0,T-1]}),
\end{align}
where $\{\bar{X}_t\}$ denotes the uncontrolled state process obtained by substituting $U_t=0$ in \eqref{eq:stateEquation}, i.e., $\bar{X}_{t+1}=A\bar{X}_t+W_t$, the notation $|\dt\left(A\right)|$ represents the absolute value of determinant
of matrix $A$ and $$I(\bar{X}_{[0,T-1]} \rightarrow R_{[0,T-1]})=\sum^{T-1}_{t=0} I
\left(\bar{X}_{[0,t]};R_t|R_{[0,t\sm 1]}\right)$$ is the directed
information from the uncontrolled state process
$\{\bar{X}_{[0,T-1]}\}$ to the sequence of variables $\{R_{[0,T-1]}\}$
received by the controller over the network of relay nodes.
\end{theorem}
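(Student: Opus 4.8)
The plan is to prove the inequality in two stages. First, I would show that the directed information on the right-hand side in fact equals the ordinary mutual information $I(\bar X_{[0,T-1]};R_{[0,T-1]})$, exploiting that the uncontrolled state process $\{\bar X_t\}$ is driven only by the exogenous data $(X_0,W_{[0,T-2]})$ and is therefore never ``fed back'' through the network. Second, I would show that mean-square stability forces $\frac{1}{T} I(\bar X_{[0,T-1]};R_{[0,T-1]})$ to be at least $\log|\dt(A)|$ in the limit.

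For the first stage, write (by the chain rule) $I(\bar X_{[0,T-1]}\to R_{[0,T-1]})=\sum_{t=0}^{T-1} I(\bar X_{[0,t]};R_t\mid R_{[0,t-1]})$ and $I(\bar X_{[0,T-1]};R_{[0,T-1]})=\sum_{t=0}^{T-1} I(\bar X_{[0,T-1]};R_t\mid R_{[0,t-1]})$, and verify that the difference of the $t$-th summands, namely $I(\bar X_{[t+1,T-1]};R_t\mid R_{[0,t-1]},\bar X_{[0,t]})$, is zero. Unrolling the encoder, relay and controller maps, $R_t$ is a deterministic function of $X_0$, the plant noises $W_{[0,t-1]}$, and the channel noises through time $t$; since $A$ is invertible, conditioning on $\bar X_{[0,t]}$ determines $(X_0,W_{[0,t-1]})$, so given $(\bar X_{[0,t]},R_{[0,t-1]})$ the residual randomness in $R_t$ is only channel noise, which is independent of the future plant noises $W_{[t,T-2]}$ that produce $\bar X_{[t+1,T-1]}$. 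Equivalently, in Massey's conservation law the reverse directed information vanishes, since each increment $W_{t-1}=\bar X_t-A\bar X_{t-1}$ is independent of everything generated through time $t-1$. This argument does not depend on the network topology or on memory at the relays, because $\{\bar X_t\}$ lies outside the feedback loop.

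For the second stage, solving \eqref{eq:stateEquation} gives $X_t-\bar X_t=\sum_{k=0}^{t-1}A^{t-1-k}BU_k$, which is a deterministic function of $R_{[0,t-1]}$ since $U_k=\pi_k(R_{[0,k]})$; subtracting a function of the conditioning variables does not change conditional differential entropy, so $h(\bar X_{T-1}\mid R_{[0,T-1]})=h(X_{T-1}\mid R_{[0,T-1]})\le h(X_{T-1})$. By mean-square stability $\ex[\|X_{T-1}\|^2]<M$, and the Gaussian maximum-entropy bound gives $h(X_{T-1})\le\frac{n}{2}\log(2\pi e M/n)=:c$, a constant independent of $T$. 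In the other direction, since $\bar X_{t+1}=A\bar X_t+W_t$ with $W_t$ independent of $\bar X_t$ and $A$ invertible, $h(\bar X_{t+1})\ge h(A\bar X_t)=h(\bar X_t)+\log|\dt(A)|$, so $h(\bar X_{T-1})\ge h(X_0)+(T-1)\log|\dt(A)|$. Hence
\begin{align*}
I(\bar X_{[0,T-1]};R_{[0,T-1]}) &\ge I(\bar X_{T-1};R_{[0,T-1]})=h(\bar X_{T-1})-h(\bar X_{T-1}\mid R_{[0,T-1]})\\
&\ge (T-1)\log|\dt(A)|+h(X_0)-c,
\end{align*}
and dividing by $T$, taking $\liminf_{T\to\infty}$, and combining with the first stage proves the theorem.

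The routine parts are the maximum-entropy estimate and the chain-rule bookkeeping; the step that needs care is the first stage --- making precise that no relay configuration, however complex and however much memory the relays carry, can render the directed information strictly smaller than the mutual information --- together with the usual caveats in handling differential entropies (for instance $h(\bar X_{T-1}\mid R_{[0,T-1]})$ may be $-\infty$, in which case the claimed bound is trivial, and one uses the standing assumption $|h(X_0)|<\infty$, and non-degeneracy of $K_W$ after decoupling to the unstable modes, to keep the relevant entropies finite).
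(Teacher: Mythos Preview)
Your argument is correct, and it reaches the conclusion by a route that is organized differently from the paper's. The paper works throughout with the \emph{controlled} process: it expands $I(X_{[0,T-1]}\to R_{[0,T-1]})$ term by term, drops the control $BU_{t-1}$ inside $h(X_t\mid R_{[0,t-1]})$ because $U_{t-1}=\pi_{t-1}(R_{[0,t-1]})$, and then telescopes $h(X_{t-1}\mid R_{[0,t-1]})-h(X_t\mid R_{[0,t]})$ after peeling off $\log|\dt(A)|$ at each step; only at the very end does it show $I(X\to R)=I(\bar X\to R)$ by the translation argument. You instead first identify $I(\bar X_{[0,T-1]}\to R_{[0,T-1]})=I(\bar X_{[0,T-1]};R_{[0,T-1]})$ via the Markov chain $\bar X_{[t+1,T-1]}\text{--}(\bar X_{[0,t]},R_{[0,t-1]})\text{--}R_t$, then project to the single term $I(\bar X_{T-1};R_{[0,T-1]})$ and obtain the $(T-1)\log|\dt(A)|$ growth from $h(\bar X_{t+1})\ge h(\bar X_t)+\log|\dt(A)|$ iterated on the open loop. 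Both routes use the same three ingredients (the $\log|\dt(A)|$ entropy-expansion identity, $U_t$ measurable with respect to $R_{[0,t]}$, and the Gaussian maximum-entropy bound from mean-square stability), but your packaging avoids the term-by-term telescoping and never needs to work with the controlled process at all. One small remark: the invertibility of $A$ is not actually needed to recover $(X_0,W_{[0,t-1]})$ from $\bar X_{[0,t]}$ (since $W_k=\bar X_{k+1}-A\bar X_k$), though it is of course needed for $h(A\bar X_t)=h(\bar X_t)+\log|\dt(A)|$, and it holds here because $|\lambda_i|\ge 1$ by the standing assumption.
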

\begin{proof}
The proof is given in Appendix \ref{apx:NecGeneral}, which essentially follows from the same steps as in the proof of Theorem 4.1 in \cite{YukselIT12}, however, with some differences due to the network structure. Similar constructions can also be found in \cite{MartinsDahleh08,SilvaOstergaard11}.
\end{proof}

\section{Cascade (Serial) Network}\label{sec:Cascade}
In this section we consider a cascade network of half-duplex relay nodes. A node which is capable of transmitting and receiving signals simultaneously using the same frequency band is known as \emph{full-duplex} while a \emph{half-duplex} node cannot simultaneously receive and transmit signals. In practice it is expensive and hard to a build a communication device which can transmit and receive signals at the same time using the same frequency, due to the self-interference created by the transmitted signal to the received signal. Therefore half-duplex systems are mostly used in practice. Consider a cascade network comprised of $L-1$ half-duplex relay nodes depicted in Fig.~\ref{fig:CascadeNetwork}, where the state encoder $\e$ observes the state of the system and transmits its signal to the relay node $\r_1$. The relay node $\r_1$ transmits a signal to the relay node $\r_2$ and so on. Finally the state information is received at the remote decoder/controller $\d$ from $\r_{L-1}$. The communication within the network takes place such that only one node is allowed to transmit at every time step. That is, if in a time slot $\r_i$ transmits signal to $\r_{i+1}$, then all the remaining nodes in the network are considered to be silent in that time slot. At any time step $t$, $S_{e,t}$ is the signal transmitted from $\e$ and $S^i_{r,t}$ is the signal transmitted from $\r_i$, which are given by
\begin{align}\label{eq:inOutCascadeRelay1}
&S_{e,t}=f_t\left(X_{[0,t]},U_{[0,t-1]}\right) \quad \forall t:t=1+nL, n\in\mathbb{N}, \qquad S_{e,t}=0 \quad \text{otherwise}, \nonumber \\
&S^i_{r,t}=g^i_t\left(Y^i_{[0,t]}\right) \quad \forall t:t=1+i+nL, n\in\mathbb{N}, \qquad S^i_{r,t}=0 \quad \text{otherwise},
\end{align}
where  $\mathbb{N}=\{0,1,2,\dots\}$, $f_t:\mathbb{R}^{2t\sm1}\rightarrow\mathbb{R}$, $g^i_t:\mathbb{R}^t\rightarrow\mathbb{R}$ such that $\ex\left[f^2_t\left(X_{[0,t]},U_{[0,t-1]}\right)\right]=L\ps$, $\ex\left[\left(g^i_t\left(Y_{[0,t]}\right)\right)^2\right]=L\pr^i$, $\sum^{L\sm1}_{i=1}\pr^i\leq\pra$. The signal received by $\r_i$ is
\begin{align}\label{eq:inOutCascadeRelay2}
Y^1_t=S_{e,t}+Z^1_t, \quad Y^i_t=S^{i\sm1}_{r,t}+Z^i_t \quad \forall t:t=nL+i, n\in\mathbb{N}, \qquad Y^i_t=0 \quad \text{otherwise}.
\end{align}
Here $Z^i_t\sim\mathcal{N}(0,N_i)$ denotes mutually independent white Gaussian noise components. Accordingly $\d$ receives $R_t=S^{L\sm1}_{r,t}+Z^L_t$ at $t=nL$ and zero otherwise.

\begin{figure}[!h]
    \centering
    \psfrag{p}[][][3.5]{\begin{sideways}Plant\end{sideways}}
    \psfrag{e}[][][3.5]{$\mathcal{E}$}
    \psfrag{d}[][][3.5]{$\mathcal{D}$}
    \psfrag{rn}[][][3.5]{$\mathcal{R}_L$}
    \psfrag{r1}[][][3.5]{$\mathcal{R}_1$}
    \psfrag{r2}[][][3.5]{$\mathcal{R}_2$}
    \psfrag{r3}[][][3.5]{$\mathcal{R}_{L\sm1}$}
    \psfrag{ch}[][][2]{AWGN Relay Channel}
    \psfrag{nfch}[][][2]{\begin{sideways}Noiseless Feedback Communication Channel\end{sideways}}
    \psfrag{z1}[][][2.5]{$Z^1_{t}$}
    \psfrag{z2}[][][2.5]{$Z^2_{t}$}
    \psfrag{z3}[][][2.5]{$Z^{L}_{t}$}
    \psfrag{st}[][][2.5]{$S_t$}
    \psfrag{rt}[][][2.5]{$R_t$}
    \psfrag{ut}[][][2.5]{$U_t$}
    \psfrag{s1}[][][2.5]{$S_{e,t}$}
    \psfrag{s2}[][][2.5]{$S^1_{r,t}$}
    \psfrag{s3}[][][2.5]{$S^2_{r,t}$}
    \psfrag{s4}[][][2.5]{$S^{L\sm1}_{r,t}$}
    \psfrag{y1}[][][2.5]{$Y^1_t$}
    \psfrag{y2}[][][2.5]{$Y^2_t$}
    \psfrag{y3}[][][2.5]{$Y^{L\sm1}_t$}
    \psfrag{y4}[][][2.5]{$R_t$}
    \resizebox{14 cm}{!}{\epsfbox{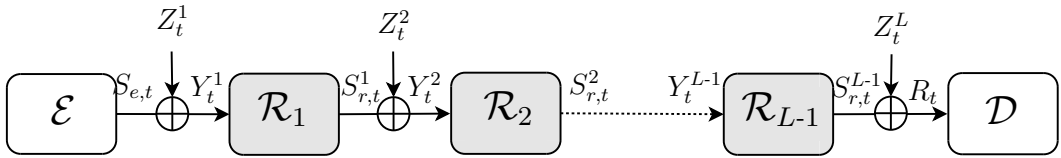}}
    \vspace{-.2cm}
  \caption{A cascade Gaussian network model.}\label{fig:CascadeNetwork}
\end{figure}

We now present a necessary condition for mean square stability over the given channel.
\begin{theorem}\label{thm:cascadeNetwork_nec}
If the system (\ref{eq:stateEquation}) is mean square stable over the \emph{cascade network} then
\begin{align}\label{eq:thmCascadenec}
\log\left(\left|\dt\left(A\right)\right|\right)<\frac{1}{2L} \log\left(1+L\min\left\{\frac{\ps}{\n_1},\frac{\pra}{\sum^{L}_{i=2}\n_i}\right\}\right).
\end{align}
\end{theorem}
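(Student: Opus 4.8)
The plan is to combine the general necessary condition of Theorem~\ref{thm:NecGeneral} with a cut-set estimate for the directed information that is tailored to the serial topology and to the time-division schedule \eqref{eq:inOutCascadeRelay1}--\eqref{eq:inOutCascadeRelay2}. First I would apply Theorem~\ref{thm:NecGeneral}: mean square stability over the cascade network forces $\log(|\dt(A)|)\le\liminf_{T\to\infty}\frac1T\,I(\bar X_{[0,T-1]}\rightarrow R_{[0,T-1]})$, so it is enough to upper bound this directed information rate by $\frac{1}{2L}\log\big(1+L\min\{\ps/\n_1,\ \pra/\sum_{i=2}^{L}\n_i\}\big)$ and then to argue that the resulting inequality is strict.

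Next I would bound that rate across every ``cut'' of the cascade. Index the links by $j=1,\dots,L$: link $j$ carries the signal transmitted by node $j-1$ (which is $\e$ if $j=1$ and $\r_{j-1}$ if $j\ge2$), is corrupted by $Z^j_t\sim\mathcal{N}(0,\n_j)$, and is active only at the instants $t=nL+j$, $n\in\mathbb{N}$; there $\ex[(S^{(j)}_t)^2]=L\,\pr^{\,j-1}$, with the convention $\pr^{\,0}:=\ps$, where $S^{(j)}_t$ denotes the input of link $j$. Since every relay acts causally on its own received sequence and the channel noises on the other links are mutually independent, every symbol delivered to $\d$ depends on the uncontrolled state process only through the signal that crosses link $j$; the cut-set argument for directed information used in the proof of Theorem~\ref{thm:NecGeneral} (see also \cite{MartinsDahleh08,SilvaOstergaard11}) then gives, for each $j$,
\[
I(\bar X_{[0,T-1]}\rightarrow R_{[0,T-1]})\ \le\!\!\sum_{n\in\mathbb{N}:\ nL+j\le T-1}\!\! \tfrac12\log\!\Big(1+\tfrac{\ex[(S^{(j)}_{nL+j})^2]}{\n_j}\Big)\ \le\ \Big(\tfrac{T}{L}+O(1)\Big)\tfrac12\log\!\Big(1+\tfrac{L\,\pr^{\,j-1}}{\n_j}\Big),
\]
the first inequality also using that a single use of the additive Gaussian link $Y=S+Z^j$ carries at most $\tfrac12\log(1+\ex[S^2]/\n_j)$ bits and is $0$ at the inactive instants. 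Dividing by $T$ and letting $T\to\infty$ gives $\liminf_{T\to\infty}\tfrac1T I(\bar X_{[0,T-1]}\rightarrow R_{[0,T-1]})\le\tfrac{1}{2L}\log(1+L\,\pr^{\,j-1}/\n_j)$ for every $j=1,\dots,L$.

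Finally I would take the minimum of these $L$ bounds and optimize over the relay power allocation. Because $\pr^{\,1},\dots,\pr^{\,L-1}>0$ with $\sum_{i=1}^{L-1}\pr^i\le\pra$, the mediant inequality gives $\min_{2\le j\le L}\pr^{\,j-1}/\n_j\le\big(\sum_{j=2}^{L}\pr^{\,j-1}\big)\big/\big(\sum_{j=2}^{L}\n_j\big)\le\pra/\sum_{i=2}^{L}\n_i$, hence $\min_{1\le j\le L}\pr^{\,j-1}/\n_j\le\min\{\ps/\n_1,\ \pra/\sum_{i=2}^{L}\n_i\}$; monotonicity of $x\mapsto\log(1+Lx)$ then yields \eqref{eq:thmCascadenec} with ``$\le$'' in place of ``$<$''. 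For the strict inequality I would invoke the same refinement as in Theorem~\ref{thm:NecGeneral} (cf. \cite{BraslavskyFreudenberg07,YukselIT12}): equality throughout would force, on whichever link attains the minimum, its output to be a white Gaussian sequence of maximal power, which cannot coexist with the closed loop feeding back the temporally correlated process $\bar X$ while keeping $\ex[\|X_t\|^2]$ finite.

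The step I expect to be the real obstacle is the cut-set inequality above: because of the feedback path $R\to U\to X\to S_e$ the link inputs are not independent of the downstream channel noises, so one cannot simply assert a Markov chain $\bar X\to Y^{(j)}\to R$ and invoke the data-processing inequality for ordinary mutual information. This is precisely the ``difference due to the network structure'' noted after Theorem~\ref{thm:NecGeneral}; it has to be handled by working with directed information, conditioning on the past outputs of link $j$, and using the causality of the relay maps, along the lines of \cite{MartinsDahleh08,SilvaOstergaard11,YukselIT12}. The remaining ingredients --- the single-link Gaussian bound, the $1/L$ duty cycle, and the mediant optimization --- are routine.
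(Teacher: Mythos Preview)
Your plan is essentially the paper's proof: apply Theorem~\ref{thm:NecGeneral}, bound the directed information by a cut across each link using the Gaussian single-letter capacity and the $1/L$ duty cycle, take the minimum, and optimize the relay powers. Your mediant inequality is equivalent to (and slightly slicker than) the paper's explicit max--min, which picks $\pr^i=\n_{i+1}\pra/\sum_{j=2}^{L}\n_j$ to equalize the ratios.

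One remark on the ``obstacle'' you flag: it is milder than you fear. The paper's first move is $I(\bar X_{[1,LT]}\rightarrow R_{[1,LT]})=I(\bar X_{[1,LT]};R_{[1,LT]})$ by \cite[Theorem~2]{Massey90}, because the \emph{uncontrolled} process $\bar X$ receives no feedback; the loop $R\to U\to X\to S_e$ disappears once $X$ is replaced by $\bar X$ (this is exactly \eqref{eq:noControlDirect}). After that, one simply adds $Y^i_{[1,LT]}$ as side information and checks the Markov chain $\bar X_{[1,LT]}-(Y^i_{[1,t]},R_{[1,t-1]})-R_t$, which holds because $R_t$ is a deterministic function of $Y^i_{[1,t]}$ and the downstream noises $Z^{i+1},\dots,Z^L$, all independent of $\bar X$. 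Also note that the paper, like your derivation, actually only establishes ``$\le$'' in \eqref{eq:thmCascadenec}; the strict inequality in the statement is not argued there.
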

\begin{proof}
We first derive an outer bound on the directed information $I(\bar{X}_{[1,LT]} \rightarrow R_{[1,LT]})$ over the given channel and then use Theorem \ref{thm:NecGeneral} to find the necessary condition \eqref{eq:thmCascadenec}.
\begin{align}\label{eq:boundNec1Cascade}
&I(\bar{X}_{[1,LT]} \rightarrow R_{[1,LT]})\stackrel{(a)}{=}I(\bar{X}_{[1,LT]} ; R_{[1,LT]})\stackrel{(b)}{\leq}I(\bar{X}_{[1,LT]}; Y^i_{[1,LT]}, R_{[1,LT]}) \nonumber \\
&= \sum^{LT}_{t=1}I(\bar{X}_{[1,LT]} ; R_t, Y^i_t | R_{[1,t\sm1]}, Y^i_{[1,t\sm 1]})\stackrel{(c)}{=} \sum^{LT}_{t=1} \Big( h(R_t, Y^i_t | R_{[1,t-1]}, Y^i_{[1,t\sm1]}) \nonumber \\
&\quad - h(R_t, Y^i_t | R_{[1,t-1]}, Y^i_{[1,t\sm 1]},\bar{X}_{[1,LT]})\Big)\stackrel{(d)}{=}\sum^{LT}_{t=1}\Big(h(Y^i_t | R_{[1,t-1]}, Y^i_{[1,t-1]}) + h(R_t|R_{[1,t\sm1]}, Y^i_{[1,t]})  \nonumber \\
&\quad - h(Y^i_t | R_{[1,t\sm1]}, Y^i_{[1,t\sm 1]},\bar{X}_{[1,LT]})- h(R_t|R_{[1,t\sm1]}, Y^i_{[1,t]},\bar{X}_{[1,LT]}) \Big) \nonumber \\
&\stackrel{(e)}{=}\sum^{LT}_{t=1}\Big(h(Y^i_t | R_{[1,t-1]}, Y^i_{[1,t-1]})- h(Y^i_t | R_{[1,t\sm1]}, Y^i_{[1,t\sm 1]},\bar{X}_{[1,LT]})+\underbrace{I(R_t;\bar{X}_{[1,LT]}|R_{[1,t\sm1]}, Y^i_{[1,t]})}_{=0}\Big) \nonumber \\
&\stackrel{(f)}{\leq}\sum^{LT}_{t=1}\left(h(Y^i_t)- h(Y^i_t |R_{[1,t\sm1]}, Y^i_{[1,t\sm 1]},\bar{X}_{[1,LT]})\right)\nonumber \\
&\stackrel{(g)}{\leq}\sum^{LT}_{t=1}\left(h(Y^i_t)- h(Y^i_t | S^{i\sm1}_{r,t},R_{[1,t\sm1]}, Y^i_{[1,t\sm 1]},\bar{X}_{[1,LT]})\right)\stackrel{(h)}{=}\sum^{LT}_{t=1}I(S^{i\sm1}_{r,t};Y^i_t) \nonumber\\
&\stackrel{(i)}{=}\sum^{T-1}_{t=0}I(S^{i\sm 1}_{r,tL+i};Y^i_{tL+i})\stackrel{(j)}{\leq}\frac{1}{2}\sum^{T-1}_{t=0}\log\left(1+\frac{L\pr^{i\sm 1}}{\n_i}\right)=\frac{T}{2}\log\left(1+\frac{L\pr^{i\sm 1}}{\n_i}\right)
\end{align}
where $(a)$ follows from \cite[Theorem 2]{Massey90}; $(b)$ follows from the fact that adding side information cannot decrease mutual information; $(c)$, $(d)$ and $(e)$ follow from properties of mutual information and differential entropy; $(f)$ follows from conditioning reduces entropy and the following Markov chain $\bar{X}_{[1,LT]}-(Y^i_{[1,t]},R_{[1,t\sm1]})-R_{t}$; $(g)$ follow from conditioning reduces entropy; $(h)$ follows from the Markov chain $Y^i_t-S^{i\sm1}_{r,t}-(R_{[1,t\sm1]}, Y^i_{[1,t\sm 1]},\bar{X}_{[1,LT]})$ due to memoryless channel from $S^{i\sm1}_{r,t}$ to  $Y^i_t$; $(i)$ follows from \eqref{eq:inOutCascadeRelay1} and \eqref{eq:inOutCascadeRelay2}; and $(j)$ follows from the fact that mutual information of a Gaussian channel is maximized by the Gaussian input distribution \cite[Theorem 8.6.5]{InfoBook}. If we replace $Y^i_{[1,LT]}$ with $Y^1_{[1,LT]}$ in step $(b)$ of \eqref{eq:boundNec1Cascade} and $S^{i\sm1}_{r,t}$ with $S_{e,t}$ in step $(g)$ of \eqref{eq:boundNec1Cascade}, then we get the following bound:
\begin{align}\label{eq:boundNec2Cascade}
I(\bar{X}_{[1,LT]} \rightarrow R_{[1,LT]})\leq\frac{T}{2}\log\left(1+\frac{L\ps}{\n_1}\right).
\end{align}
The directed information $I(\bar{X}_{[1,LT]} \rightarrow R_{[1,LT]})$ can also be bounded as
\begin{align}\label{eq:boundNec3Cascade}
&I\left(\bar{X}_{[1,LT]} \rightarrow R_{[1,LT]}\right)\stackrel{}{=}\sum^{LT}_{t=1}I\left(\bar{X}_{[1,t]} ; R_t | R_{[1,t\sm1]}\right)\stackrel{(a)}{\leq}\sum^{LT}_{t=1}I\left(S^{L\sm1}_{r,[1,t]} ; R_t| R_{[1,t\sm1]}\right)=I\left(S^{L\sm1}_{r,[1,LT]} \rightarrow R_{[1,LT]}\right) \nonumber \\
&\stackrel{(b)}{\leq}\sum^{LT}_{t=1}I(S^{L\sm 1}_{r,t};R_{t})\stackrel{(c)}{=}\sum^{T-1}_{t=0}I(S^{L\sm 1}_{r,tL+L};R_{tL+L})\stackrel{(d)}{\leq}\frac{T}{2}\log\left(1+\frac{L\pr^{L\sm 1}}{\n_L}\right),
\end{align}
where $(a)$ follows from the Markov chain $\bar{X}_{[1,LT]}-(S^{L\sm1}_{r,[1,t]},R_{[1,t\sm1]})-R_{[1,t]}$, $(b)$ follows from from \cite[Theorem 1]{Massey90}; $(c)$ follows from \eqref{eq:inOutCascadeRelay1} and \eqref{eq:inOutCascadeRelay2}; and $(d)$ follows from the fact that mutual information of a Gaussian channel is maximized by the Gaussian input distribution \cite[Theorem 8.6.5]{InfoBook}. Finally using \eqref{eq:boundNec1Cascade}, \eqref{eq:boundNec2Cascade}, and \eqref{eq:boundNec3Cascade}, we have the following bound:
\begin{align}\label{eq:directedInfoCascade}
&I(\bar{X}_{[1,LT]} \rightarrow R_{[1,LT]})\stackrel{}{\leq}\frac{T}{2}\min\left\{\log\left(1+\frac{L\ps}{\n_1}\right),\log\left(1+\frac{L\pr^1}{\n_2}\right),\dots,\log\left(1+\frac{L\pr^{L\sm1}}{\n_{L}}\right) \right\} \nonumber \\
&\stackrel{(a)}{=}\frac{T}{2}\log\left(1+L\min\left\{\frac{\ps}{\n_1},\frac{\pr^1}{\n_2},\dots,\frac{\pr^{L\sm1}}{\n_{L}}\right\}\right) \nonumber \\
&\leq\frac{T}{2}\log\left(1+L\min\left\{\frac{\ps}{\n_1},\max_{\pr^i: \sum \pr^i\leq \pra} \min \left\{\frac{\pr^1}{\n_2},\dots,\frac{\pr^{L\sm1}}{\n_{L}}\right\}\right\}\right) \nonumber \\
&\stackrel{(b)}{=}\frac{T}{2}\log\left(1+L \min\left\{\frac{\ps}{\n_1},\frac{\pra}{\sum^{L}_{i=2}\n_i}\right\}\right),
\end{align}
$(a)$ follows from the fact that $\log(1+x)$ is a monotonically increasing function of $x$; and $(b)$ follows from the optimal power allocation choice $\pr^i=\frac{\n_{i+1}\pra}{\sum^{L}_{i=2}\n_i}$. Finally dividing \eqref{eq:directedInfoCascade} by $LT$ and let $T\rightarrow \infty$ according to Theorem \ref{thm:NecGeneral}, we get the necessary condition \eqref{eq:thmCascadenec}.
\end{proof}

We now present a sufficient condition for mean-square stability over the given network.
\begin{theorem}\label{thm:cascadeNetwork_suff}
The scalar linear time invariant system in (\ref{eq:stateEquation}) with $A=\lm$ can be mean square stabilized using a linear scheme over a \emph{cascade network} of $L$ relay nodes if
\begin{align}\label{eq:thmCascadesuff}
\log\left(|\lm|\right)<\max_{\pr^i:\sum^L_{i=1}\pr^i \leq \pra} \frac{1}{2L} \log\left(1+\frac{L\ps}{L\ps+\n_1}\prod^{L-1}_{i=1}\left(\frac{L\pr^i}{L\pr^i+\n_{i+1}}\right)\right),
\end{align}
where the optimal power allocation is given by $\pr^i=\frac{-\n_{i+1}+\sqrt{\n^2_{i+1}-\frac{4\n_{i+1}}{\gamma}}}{2}$ and $\gamma<0$ is chosen such that $\sum^L_{i=1}\pr^i \leq \pra$. When all $N_i$ are equal, the optimal choice is $\pr^i=\frac{\pra}{L-1}$.
\end{theorem}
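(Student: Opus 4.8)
\emph{Proof plan.} The plan is to exhibit an explicit delay‑free \emph{linear} scheme — amplify‑and‑forward at the relays together with a linear predictor/estimator and deadbeat control at $\d$ — and to show that the stated inequality forces the $L$‑sampled closed loop to obey a contractive second‑moment recursion. Since only one node is active per slot, the end‑to‑end channel $\e\to\d$ is used once per period of $L$ slots, and the output received by $\d$ at time $(n+1)L$ carries information about the state $X_{nL+1}$ that $\e$ transmitted at the start of cycle $n$ (a delay of $L-1$ slots). Let $\d$ run the one‑step predictor $\hat X_{t+1}=\lm\hat X_t+BU_t$ with a measurement update whenever a channel output arrives and apply $U_t=-\tfrac{\lm}{B}\hat X_t$; then $\hat X$ is reset to $0$ after each control, so $U_t=0$ throughout the interior of a cycle, the error $e_t:=X_t-\hat X_t$ satisfies $e_{t+1}=\lm e_t+W_t$ between updates, and $X_{nL+1}=e_{nL+1}$ has bounded variance whenever the loop is stable. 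The encoder sends $S_{e,nL+1}=\g_e X_{nL+1}$ at full instantaneous power $\g_e^2\ex[X_{nL+1}^2]=L\ps$ (the factor $L$ being the one‑in‑$L$ duty cycle), each $\r_i$ forwards $\g_i Y^i$ with $\g_i^2\ex[(Y^i)^2]=L\pr^i$, and $\d$ forms the linear MMSE estimate of $X_{nL+1}$ from $R_{(n+1)L}$.

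The core computation tracks the ``signal fraction'' $\ro:=\tfrac{(\text{gain})^2\ex[X_{nL+1}^2]}{(\text{gain})^2\ex[X_{nL+1}^2]+(\text{accumulated noise})}$ of the signal about $X_{nL+1}$ as it moves along the chain. A one‑line calculation shows that a node which receives $Y$ with signal fraction $\ro$ and forwards it at power $Q$ over a link of noise $N'$ hands on signal fraction $\ro\cdot\tfrac{Q}{Q+N'}$; hence each hop simply multiplies the signal fraction by $\tfrac{Q_{\text{hop}}}{Q_{\text{hop}}+N_{\text{hop}}}$. Starting from signal fraction $1$ at $\e$ and composing over the $L$ hops gives the end‑to‑end value $\eta:=\tfrac{L\ps}{L\ps+\n_1}\prod_{i=1}^{L-1}\tfrac{L\pr^i}{L\pr^i+\n_{i+1}}$, so $\d$'s linear‑MMSE error for $X_{nL+1}$ is at most $(1-\eta)\ex[X_{nL+1}^2]$.

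Because the observation is $L-1$ slots stale and all intermediate controls are zero, $\d$ predicts $X_{(n+1)L}=\lm^{L-1}X_{nL+1}+\sum_{k=0}^{L-2}\lm^{k}W_{(n+1)L-1-k}$ forward, picking up $K_W\sum_{k=0}^{L-2}\lm^{2k}$ of unknown plant noise; applying deadbeat at $(n+1)L$ then gives, with $V_n:=\ex[X_{nL+1}^2]$ and a finite constant $c=K_W\sum_{k=0}^{L-1}\lm^{2k}$, the recursion $V_{n+1}\le\lm^{2L}(1-\eta)\,V_n+c$, so the sampled chain — and, by a routine estimate over the $L$ intermediate slots of each cycle, the whole process $\{X_t\}$ — is mean‑square bounded whenever $\lm^{2L}(1-\eta)<1$. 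The stated condition $2L\log|\lm|<\log(1+\eta)$, i.e.\ $\lm^{2L}<1+\eta$, implies $\lm^{2L}(1-\eta)<(1+\eta)(1-\eta)=1-\eta^2<1$, which is exactly what is needed. Finally, maximizing $\eta$ subject to $\sum_i\pr^i\le\pra$ is a concave program (each $\log\tfrac{L\pr^i}{L\pr^i+\n_{i+1}}$ is concave in $\pr^i$), whose KKT stationarity condition $\tfrac{\n_{i+1}}{\pr^i(L\pr^i+\n_{i+1})}=\text{const}$ rearranges to the quoted quadratic expression for $\pr^i$, and collapses by symmetry to $\pr^i=\pra/(L-1)$ when all $\n_i$ coincide.

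The main obstacle is the bookkeeping, not the algebra: one must match the half‑duplex, one‑in‑$L$ transmission schedule to the correct state indices, carry the $(L-1)$‑slot delay together with the plant‑noise accumulation over it, and settle the well‑posedness of the gains $\g_e,\g_i$ — these depend on steady‑state variances, which themselves presuppose stability. This circularity is handled in the standard way: assume the variances remain bounded, verify that the one‑period recursion closes under the stated condition, and read off the resulting fixed point (alternatively, use slowly adapted time‑varying gains, which the statement permits).
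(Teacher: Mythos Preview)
Your approach is essentially the paper's: memoryless amplify-and-forward at the relays turns the cascade into an effective scalar Gaussian channel, and the linear innovations/deadbeat scheme yields a one-period second-moment recursion; the power allocation follows by Lagrange/KKT on the concave objective. The paper only sketches this and points to its Theorem~\ref{thm:HalfDup} for the mechanics, so your filled-in bookkeeping (the $L-1$-slot delay, the accumulated process noise, and the well-posedness of the time-varying gains) is exactly what that reference supplies.

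One point worth flagging: your signal-fraction analysis actually produces the \emph{tighter} stability condition $\lambda^{2L}(1-\eta)<1$, i.e.\ $\log|\lambda|<\tfrac{1}{2L}\log\tfrac{1}{1-\eta}$, which is the per-slot capacity of the effective channel (and matches the $1+\mathrm{SNR}$ form appearing in the parallel-network sufficiency, Theorem~\ref{thm:parallel_suff}, when specialized to a single relay). The statement as written has $\log(1+\eta)$ in place of $\log\tfrac{1}{1-\eta}$; you correctly note that the former implies the latter via $(1+\eta)(1-\eta)<1$, so the theorem follows, but your derivation in fact proves more.
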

\emph{Outline of proof:}
The result can be derived by using a memoryless linear sensing and control scheme. Under linear policies, the overall mapping from the encoder to the controller becomes a scalar Gaussian channel, which has been well studied in the literature (see for example \cite{BansalBasar89}). Due to space constraints, we refer the reader to the proof of Theorem 5.2, which contains a detailed derivation for the non-orthogonal network and the proof for this setting is similar. The optimal power allocation follows from the concavity of $\prod^{L-1}_{i=1}\left(\frac{L\pr^i}{L\pr^i+\n_{i+1}}\right)$ in $\{\pr^i\}^{L-1}_{i=1}$ and by using the Lagrange multiplier method.

\begin{remark}
For fixed power allocations, as the number of relays $L$ approaches infinity in \eqref{eq:thmCascadenec}, the right hand side converges to zero and stabilization becomes impossible. We also note that the ratio between the sufficiency and necessity bounds converges to zero as the number of relays goes to infinity.
\end{remark}

In the related problem on the transmission of a Gaussian source with minimum mean-square distortion \cite{zaidiCDC11,LipsaMartinsAutomatica2011}, it is shown that linear sensing policies are not
globally optimal in general when there is one or more relay nodes in cascade. However linear policies
are shown to be person-by-person optimal in a single relay
setup. According to \cite{LipsaMartinsAutomatica2011,zaidiCDC11},
simple quantizer based policies can lead to a lower mean-square distortion than the best linear policy. We expect such non-linear policies to be useful for stabilization over cascade relay channels.

\section{Parallel Network}\label{sec:Parallel}

Consider the network shown in Fig.~\ref{fig:ParallelNetwork}, where the signal transmitted by a node does not interfere with the signals transmitted by other nodes, i.e., there are $L$ parallel channels from $\{\r_i\}^L_{i=1}$ to $\d$. We call this setup a \emph{parallel network}, which models a scenario where the signal spaces of the relay nodes are mutually orthogonal. For example the signals may be transmitted in either disjoint frequency bands or in disjoint time slots. In the first transmission phase, the sensor transmits $S_{e,t}$ with an average power $\ex\left[S^2_{e,t}\right]=2\ps$ to the relays and in the second phase all relays simultaneously transmit to the remote controller with average powers $2\pr^i$ such that $\sum^L_{i=1}\pr^i\leq\pra$. Accordingly, the received signals are given by
\begin{align}\label{eq:inOutCascadeRelay}
&Y^i_t=S_{e,t}+Z^i_{r,t}, \quad R^i_t=S^{i}_{r,t}=0, \quad t=1,3,5,\dots \nonumber\\
&R^i_t=S^{i}_{r,t}+Z^i_{d,t}, \quad Y^i_t=S_{e,t}=0, \quad t=2,4,6,\dots
\end{align}
where $Z^i_{r,t}\sim\mathcal{N}(0,N^i_r)$, $Z^i_{d,t}\sim\mathcal{N}(0,N^i_d)$  denote mutually independent white Gaussian noise variables.
In the following we present conditions for mean square stability of the system in \eqref{eq:stateEquation} over the given parallel network.

\begin{figure}[!tb]
    \centering
    \psfrag{p}[][][3]{\begin{sideways}Plant\end{sideways}}
    \psfrag{e}[][][3]{$\mathcal{E}$}
    \psfrag{d}[][][3]{$\mathcal{D}$}
    \psfrag{rn}[][][3]{$\mathcal{R}_L$}
    \psfrag{r1}[][][3]{$\mathcal{R}_1$}
    \psfrag{r2}[][][3]{$\mathcal{R}_2$}
    \psfrag{r3}[][][3]{$\mathcal{R}_3$}
    \psfrag{ch}[][][2]{AWGN Relay Channel}
    \psfrag{nfch}[][][2]{\begin{sideways}Noiseless Feedback Communication Channel\end{sideways}}
    \psfrag{z}[][][2]{$Z_{t}$}
    \psfrag{z1}[][][2]{$Z^1_{r,t}$}
    \psfrag{z2}[][][2]{$Z^2_{r,t}$}
    \psfrag{zn}[][][2]{$Z^L_{r,t}$}
    \psfrag{v1}[][][2]{$Z^1_{d,t}$}
    \psfrag{v2}[][][2]{$Z^2_{d,t}$}
    \psfrag{vl}[][][2]{$Z^L_{d,t}$}
    \psfrag{y1}[][][2]{$Y^1_{t}$}
    \psfrag{y2}[][][2]{$Y^2_{t}$}
    \psfrag{ym}[][][2]{$Y^L_{t}$}
    \psfrag{y}[][][2]{$Y_{t}$}
    \psfrag{se}[][][2]{$S_{e,t}$}
    \psfrag{sr1}[][][2]{$S^1_{r,t}$}
    \psfrag{sr2}[][][2]{$S^2_{r,t}$}
    \psfrag{sr3}[][][2]{$S^L_{r,t}$}
    \psfrag{h1}[][][2]{$h_1$}
    \psfrag{h2}[][][2]{$h_2$}
    \psfrag{hn}[][][2]{$h_L$}
    \psfrag{h}[][][2]{$h$}
    \psfrag{st}[][][2.5]{$S_t$}
    \psfrag{rr1}[][][2]{$R^1_{t}$}
    \psfrag{rr2}[][][2]{$R^2_{t}$}
    \psfrag{rrn}[][][2]{$R^L_{t}$}
    \psfrag{ut}[][][2.5]{$U_t$}
    \resizebox{8 cm}{!}{\epsfbox{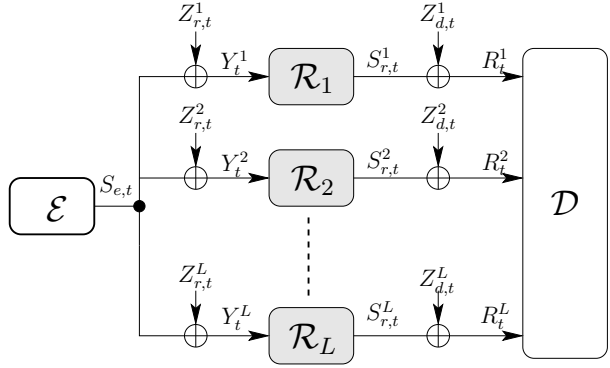}}
    \vspace{-.3cm}
  \caption{Parallel relay network.}\label{fig:ParallelNetwork}
\end{figure}

\begin{theorem}\label{thm:parallelNec}
If the system (\ref{eq:stateEquation}) is mean square stable over the \emph{parallel network} then
\begin{align}\label{eq:thmParallel_nec}
\log\left(\left|\dt\left(A\right)\right|\right)\leq \frac{1}{4}\min\left\{\log\left(1+2\sum^L_{i=1}\frac{\ps}{\nr^i}\right),\sum^L_{i=1}\log\left(1+\frac{2\pr^i}{\nd^i}\right) \right\},
\end{align}
where $\pr^i=\max\{\gamma - \nd^i, 0\}$ and $\gamma$ is chosen such that $\sum^L_{i=1}\pr^i=\pra$.
\end{theorem}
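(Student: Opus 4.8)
\emph{Proof outline.} The plan is to invoke Theorem~\ref{thm:NecGeneral} and to upper bound the directed information rate from the uncontrolled process $\{\bar{X}_t\}$ to the sequence received by $\d$. Since the parallel network runs on a period-two half-duplex schedule, over $2T$ physical steps there are $T$ \emph{sensor slots} ($t=1,3,\dots,2T-1$), in which $\e$ broadcasts $S_{e,t}$ with power $2\ps$ to the $L$ relays, and $T$ \emph{relay slots} ($t=2,4,\dots,2T$), in which the relays transmit $(S^1_{r,t},\dots,S^L_{r,t})$ with powers $2\pr^i$ to the controller; write $R_t=(R^1_t,\dots,R^L_t)$ for the received vector, which is the zero vector at odd $t$. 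I will derive two upper bounds on $I(\bar{X}_{[1,2T]}\rightarrow R_{[1,2T]})$, one per cut of the network, take their minimum, divide by $2T$, and let $T\to\infty$.

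\emph{Sensor cut.} For the first bound I reuse the telescoping and Markov-chain steps of \eqref{eq:boundNec1Cascade} in the proof of Theorem~\ref{thm:cascadeNetwork_nec}, except that the side information is the \emph{whole} relay observation vector $(Y^1_t,\dots,Y^L_t)$ rather than one coordinate: starting from $I(\bar{X}_{[1,2T]}\rightarrow R_{[1,2T]})=I(\bar{X}_{[1,2T]};R_{[1,2T]})$ (no feedback into the autonomous process, \cite{Massey90}), adding side information, applying the chain rule and ``conditioning reduces entropy'', and using the Markov relations $\bar{X}_{[1,2T]}-(Y^1_{[1,t]},\dots,Y^L_{[1,t]},R_{[1,t-1]})-R_t$ and $(Y^1_t,\dots,Y^L_t)-S_{e,t}-(\text{past},\bar{X}_{[1,2T]})$ (the latter because $S_{e,t}\mapsto(Y^1_t,\dots,Y^L_t)$ is a memoryless broadcast channel), the directed information collapses to $\sum_{t\ \mathrm{odd}}I(S_{e,t};Y^1_t,\dots,Y^L_t)$. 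Each term is the mutual information of the single-input/multiple-output Gaussian channel $Y^i_t=S_{e,t}+Z^i_{r,t}$ with input power $2\ps$; it is maximized by a Gaussian input, and $h(Y^1_t,\dots,Y^L_t)-\sum_i h(Z^i_{r,t})$ evaluates, via the matrix-determinant lemma $\dt\big(\mathrm{diag}(\nr^1,\dots,\nr^L)+2\ps\,\mathbf{1}\mathbf{1}^{\top}\big)=\big(\prod_i\nr^i\big)\big(1+2\ps\sum_i 1/\nr^i\big)$, to $I(S_{e,t};Y^1_t,\dots,Y^L_t)\le\frac12\log\big(1+2\sum_{i=1}^L\ps/\nr^i\big)$. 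Summing over the $T$ sensor slots gives $I(\bar{X}_{[1,2T]}\rightarrow R_{[1,2T]})\le\frac{T}{2}\log\big(1+2\sum_{i=1}^L\ps/\nr^i\big)$.

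\emph{Relay cut.} For the second bound I follow \eqref{eq:boundNec3Cascade}: the Markov chain $\bar{X}_{[1,2T]}-(S^1_{r,[1,t]},\dots,S^L_{r,[1,t]},R_{[1,t-1]})-R_t$ (each $S^i_{r,t}$ is a causal function of that relay's observations) together with Massey's directed data-processing inequality \cite{Massey90} give $I(\bar{X}_{[1,2T]}\rightarrow R_{[1,2T]})\le\sum_{t\ \mathrm{even}}I(S^1_{r,t},\dots,S^L_{r,t};R^1_t,\dots,R^L_t)$. The $L$ relay-to-controller links have mutually independent noises, so by subadditivity of differential entropy $I(S^1_{r,t},\dots,S^L_{r,t};R^1_t,\dots,R^L_t)=h(R^1_t,\dots,R^L_t)-\sum_i h(Z^i_{d,t})\le\sum_i\big(h(R^i_t)-h(Z^i_{d,t})\big)=\sum_i I(S^i_{r,t};R^i_t)\le\sum_i\frac12\log(1+2\pr^i/\nd^i)$. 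Summing over the $T$ relay slots gives $I(\bar{X}_{[1,2T]}\rightarrow R_{[1,2T]})\le\frac{T}{2}\sum_{i=1}^L\log(1+2\pr^i/\nd^i)$.

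\emph{Conclusion.} Taking the minimum of the two bounds, dividing by $2T$, letting $T\to\infty$, and applying Theorem~\ref{thm:NecGeneral} yields $\log|\dt(A)|\le\frac14\min\{\log(1+2\sum_i\ps/\nr^i),\ \sum_i\log(1+2\pr^i/\nd^i)\}$ for whatever feasible split $\{\pr^i\}$, $\sum_i\pr^i\le\pra$, the scheme uses. Since this split is at the designer's disposal, the inequality must hold with the allocation maximizing the second term; by concavity of $\pr^i\mapsto\log(1+2\pr^i/\nd^i)$ and the Lagrange-multiplier (KKT) conditions the maximizer is the waterfilling solution $\pr^i=\max\{\gamma-\nd^i,0\}$ with $\gamma$ fixed by $\sum_i\pr^i=\pra$, which is \eqref{eq:thmParallel_nec}. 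The step needing the most care is the directed-information telescoping with a \emph{vector}-valued channel output while bookkeeping the half-duplex schedule (so that the odd-slot terms vanish and the per-slot Markov relations hold); the only genuinely new computational ingredient relative to the cascade proof is the SIMO determinant that produces the $\sum_i 1/\nr^i$ aggregation of the inverse noise powers.
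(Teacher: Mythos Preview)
Your proof is correct and follows essentially the same approach as the paper: both derive two cut-set bounds on the directed information (a sensor-to-relays SIMO bound and a relays-to-controller parallel bound) by reusing the telescoping/Markov-chain machinery of the cascade proof, and then invoke Theorem~\ref{thm:NecGeneral} together with the water-filling optimization. Your writeup is in fact more explicit than the paper's on the two computational steps the paper glosses over---the matrix-determinant lemma for the SIMO term and the entropy-subadditivity decoupling for the parallel term.
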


\begin{proof}
Following the same steps as in proof of Theorem \ref{thm:cascadeNetwork_nec}, we can bound directed information $I(\bar{X}_{[1,2T]} \rightarrow R_{[1,2T]})$ over \emph{parallel} relay network as,
\begin{align}\label{eq:directedInfoParallel}
&I(\bar{X}_{[1,2T]} \rightarrow \{R^i_{[1,2T]}\}^L_{i=1}) \stackrel{(a)}{\leq}\min\left\{\sum^{2T}_{t=1}I\left(S_{e,t};\{Y^i_t\}^L_{i=1}\right), \sum^{2T}_{t=1}I\left(\{S^i_{r,t}\}^L_{i=1};\{R^i_t\}^L_{i=1}\right)\right\}  \nonumber\\
&\stackrel{(b)}{=}\min\left\{\sum^{T}_{t=1}I\left(S_{e,2t\sm 1};\{Y^i_{2t\sm1}\}^L_{i=1}\right), \sum^{T}_{t=1}I\left(\{S^i_{r,2t}\}^L_{i=1};\{R^i_{2t}\}^L_{i=1}\right)\right\} \nonumber \\
&\stackrel{(c)}{\leq}\frac{T}{2}\min\left\{\log\left(1+2\sum^L_{i=1}\frac{\ps}{\nr^i}\right),\max_{\pr^i: \pr\geq0,\sum_i\pr^i\leq\pra}\sum^L_{i=1}\log\left(1+\frac{2\pr^i}{\nd^i}\right) \right\},
\end{align}
where $(a)$ follows from the same steps as in \eqref{eq:boundNec1Cascade} and \eqref{eq:boundNec3Cascade}; $(b)$ follows from \eqref{eq:inOutCascadeRelay}; and $(c)$ follows from the fact that Gaussian input distribution maximizes mutual information for a Gaussian channel. The function $\sum^L_{i=1}\log\left(1+\frac{2\pr^i}{\nd^i}\right)$ is jointly concave in $\{\pr^i\}^L_{i=1}$. The optimal power allocation is given by $\pr^i=\max\{\gamma - \nd^i/2, 0\}$, where $\gamma$ is chosen such that $\sum^L_{i=1}\pr^i=\pra$, which is the well-known water-filling solution \cite[pp. 204-205]{TseBook}. We obtain \eqref{eq:thmParallel_nec} by using \eqref{eq:directedInfoParallel} in Theorem \ref{thm:NecGeneral}.
\end{proof}
We can obtain a sufficient condition for mean square stability over the \emph{parallel network} using linear policies like previously discussed scenarios, which is stated in the following theorem.
\begin{theorem}\label{thm:parallel_suff}
The scalar linear time invariant system in (\ref{eq:stateEquation}) with $A=\lm$ can be mean square stabilized using a linear scheme over the Gaussian \emph{parallel network} if
\begin{align}\label{eq:thmParallel_suff}
\log\left(\left|\lambda\right|\right)<\frac{1}{4} \log\left(1+\sum^L_{i=1}\frac{4\ps \pr^i}{2\ps \nd + 2\pr^i \nr^i +\nd \nr^i}\right).
\end{align}
\end{theorem}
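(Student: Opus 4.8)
\emph{Proof proposal.} The plan is to exhibit an explicit linear scheme that collapses the two-hop parallel network into a single scalar additive white Gaussian noise channel, and then to invoke the classical stabilizability condition for a scalar plant over a scalar Gaussian channel. Concretely: at each odd time step $t$ the sensor transmits $S_{e,t}=\a_t\big(X_t-\ex[X_t\mid\text{controller's information}]\big)$ with $\ex[S_{e,t}^2]=2\ps$, so that $S_{e,t}$ carries exactly the innovation the controller has not yet seen; at the following even step each relay $i$ amplifies and forwards its observation, $S^i_{r,t}=\b_i Y^i_{t-1}$, with the gain $\b_i^2=2\pr^i/(2\ps+\nr^i)$ chosen so that $\ex[(S^i_{r,t})^2]=2\pr^i$ and the relay powers sum to $\pra$; and the controller forms the MMSE (matched-filter) combination of the $L$ received signals $\{R^i_t\}$ and applies a dead-beat control $U_t=-\lm\hat X_t$.

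Substituting $S^i_{r,t}=\b_i\big(S_{e,t-1}+Z^i_{r,t-1}\big)$ into \eqref{eq:inOutCascadeRelay} gives $R^i_t=\b_i S_{e,t-1}+\big(\b_i Z^i_{r,t-1}+Z^i_{d,t}\big)$, i.e.\ $L$ conditionally independent noisy copies of the single symbol $S_{e,t-1}$ with noise variances $\b_i^2\nr^i+\nd^i$. Combining them optimally produces an equivalent scalar Gaussian channel $\hat S_{t-1}=S_{e,t-1}+\tilde V_{t-1}$ with
\[
  \mathrm{SNR}=\sum_{i=1}^{L}\frac{\b_i^2\,2\ps}{\b_i^2\nr^i+\nd^i}
  =\sum_{i=1}^{L}\frac{4\ps\pr^i}{2\ps\nd^i+2\pr^i\nr^i+\nd^i\nr^i},
\]
which is precisely the argument of the logarithm in \eqref{eq:thmParallel_suff}. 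Hence, exactly as in the outline of Theorem~\ref{thm:cascadeNetwork_suff}, under these linear policies the overall map from $\mathcal{E}$ to $\mathcal{D}$ is a memoryless scalar Gaussian channel, used once per two plant steps.

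It then remains to run the standard scalar-channel argument. Let $e_k=\ex\big[(X_{2k}-\hat X_{2k})^2\big]$. Between two successive channel uses the controller's error is multiplied by $\lm^2$ (and absorbs two bounded process-noise terms), while the single intervening channel use contracts the conditional error variance by the factor $1/(1+\mathrm{SNR})$; together these give a recursion $e_{k+1}\le \frac{\lm^4}{1+\mathrm{SNR}}\,e_k+c$ with $c$ a finite constant depending only on $K_W$ and $\mathrm{SNR}$. This recursion is bounded iff $\lm^4<1+\mathrm{SNR}$, i.e.\ iff $\log|\lm|<\tfrac14\log(1+\mathrm{SNR})$, which is \eqref{eq:thmParallel_suff}; the dead-beat control then keeps $\ex[X_t^2]$ uniformly bounded. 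Alternatively one may simply cite the scalar-plant/scalar-Gaussian-channel stabilizability result of \cite{BansalBasar89}, applied with effective eigenvalue $\lm^2$ to reflect the two plant steps per channel use.

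The part needing care is the information pattern, not the algebra. The sensor must form the innovation $X_t-\ex[X_t\mid\text{controller info}]$; it can, because the noiseless feedback link together with knowledge of the (scalar, invertible) control law lets it reconstruct the controller's running estimate from $U_{[0,t-1]}$, and with $U_t=-\lm\hat X_t$ the controller's one-step-ahead prediction at the next transmission opportunity is just the propagated estimate. One must also verify that the sensor and relay average-power constraints hold for every $t$, not only asymptotically --- this follows by an induction showing the error second moments remain bounded from $t=0$ on --- and that the process noise injected during the silent half of each cycle contributes only the bounded term $c$. Both are routine once the reduction to a scalar Gaussian channel has been established.
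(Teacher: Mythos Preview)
Your proposal is correct and follows essentially the same approach as the paper: the paper's own proof of this theorem is a one-line pointer to the memoryless linear amplify-and-forward scheme detailed for Theorem~\ref{thm:HalfDup} (Appendix~\ref{sec:ProofHalfDuplex}), and what you have written is exactly a self-contained instantiation of that scheme for the parallel topology --- scale-and-send the state, amplify-and-forward at each relay with gain $\b_i^2=2\pr^i/(2\ps+\nr^i)$, MMSE-combine at the controller to obtain an equivalent scalar Gaussian channel with the stated SNR, and apply dead-beat control. Your SNR computation and the two-step recursion $e_{k+1}\le \lm^4(1+\mathrm{SNR})^{-1}e_k+c$ are correct; note incidentally that under dead-beat control the controller's one-step prediction is zero, so transmitting the innovation and transmitting the scaled state coincide, which is why the feedback/information-pattern issue you flag is in fact trivial here.
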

\begin{proof}
The above result can be obtained by using a memoryless linear sensing and control scheme and as discussed in the proof of Theorem \ref{thm:cascadeNetwork_suff} and Theorem \ref{thm:HalfDup}. 
\end{proof}

\begin{proposition}
The gap between the necessary and sufficient conditions for a symmetric parallel network with $\pr^i=\pr, \nr^i=\nr$ is a non-decreasing function of the number of relays $L$ and approaches $\frac{1}{4}\log\left(1+\frac{\nd\left(2\ps+\nr\right)}{2\pr\nr}\right)$ as $L$ goes to infinity.
\end{proposition}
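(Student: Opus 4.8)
\emph{Proof sketch.} The plan is to make the two bounds explicit in the symmetric case and then study their difference as a function of the (integer) number of relays $L$. Substituting $\pr^i=\pr$, $\nr^i=\nr$, $\nd^i=\nd$ into Theorem~\ref{thm:parallelNec} and Theorem~\ref{thm:parallel_suff}, the necessary condition becomes $\log(|\dt(A)|)\le N(L):=\tfrac14\min\{\log(1+aL),\,L\log(1+d)\}$ and the sufficient condition becomes $\log(|\lm|)<S(L):=\tfrac14\log(1+cL)$, where I abbreviate $a:=2\ps/\nr$, $d:=2\pr/\nd$ and $c:=4\ps\pr/(2\ps\nd+2\pr\nr+\nd\nr)$. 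Clearing denominators yields the two elementary inequalities $c<a$ and $c<d$, which in particular give $S(1)<N(1)$, so the gap $G(L):=N(L)-S(L)$ is strictly positive at $L=1$.

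Next I would establish monotonicity of $G$ through its discrete increments. Since both terms inside the minimum defining $N$ are increasing in $L$, one has $N(L+1)-N(L)\ge\min\{\Delta_1(L),\Delta_2(L)\}$, where $\Delta_1(L):=\tfrac14\log\frac{1+a(L+1)}{1+aL}$ and $\Delta_2(L):=\tfrac14\log(1+d)$: if the minimum at $L+1$ is attained by the encoder-cut term then $N(L+1)-N(L)\ge\tfrac14\log(1+a(L+1))-\tfrac14\log(1+aL)=\Delta_1(L)$ because $N(L)\le\tfrac14\log(1+aL)$, and symmetrically for the decoder-cut term. Writing $\Delta_S(L):=\tfrac14\log\frac{1+c(L+1)}{1+cL}$, it then suffices to check $\Delta_1(L)\ge\Delta_S(L)$ and $\Delta_2(L)\ge\Delta_S(L)$: the former is equivalent to $\frac{a}{1+aL}\ge\frac{c}{1+cL}$, i.e.\ to $a\ge c$, and the latter to $d\ge\frac{c}{1+cL}$, which holds since $\frac{c}{1+cL}<c<d$. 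Hence $G(L+1)-G(L)\ge\min\{\Delta_1,\Delta_2\}-\Delta_S\ge0$, so $G$ is non-decreasing (and, combined with the first paragraph, positive for every $L$).

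For the limit, observe that $\log(1+aL)$ grows like $\log L$ while $L\log(1+d)$ grows linearly, so for all sufficiently large $L$ the encoder-cut term attains the minimum in $N(L)$ and $G(L)=\tfrac14\log\frac{1+aL}{1+cL}$. This ratio is increasing in $L$ and tends to $a/c$, hence $G(L)\uparrow\tfrac14\log(a/c)$; substituting the definitions, $a/c=(2\ps\nd+2\pr\nr+\nd\nr)/(2\pr\nr)=1+\nd(2\ps+\nr)/(2\pr\nr)$, which is precisely the claimed limiting value.

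The only delicate point is the value of $L$ at which the binding term of $N(L)$ switches from the decoder cut $L\log(1+d)$ to the encoder cut $\log(1+aL)$; the increment bound $N(L+1)-N(L)\ge\min\{\Delta_1,\Delta_2\}$ is exactly the device that makes the monotonicity argument uniform across this crossover and removes the need for a case split. All remaining steps are routine manipulations of $\log(1+\cdot)$.
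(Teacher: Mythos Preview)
Your argument is correct, and it takes a genuinely different route from the paper's. The paper does not keep the full $\min$ in the necessary bound: it simply replaces the right-hand side of Theorem~\ref{thm:parallelNec} by the encoder-cut term alone, setting $\Gamma_{\text{nec}}:=\tfrac14\log\bigl(1+2L\ps/\nr\bigr)$. The difference $\Gamma_{\text{nec}}-\Gamma_{\text{suf}}$ then collapses to the closed form
\[
\frac{1}{4}\log\!\left(1+\frac{2\ps\nd(2\ps+\nr)}{4\ps\pr\nr+\dfrac{\nr(2\ps\nd+2\pr\nr+\nd\nr)}{L}}\right),
\]
from which both monotonicity in $L$ and the limit $\tfrac14\log\bigl(1+\nd(2\ps+\nr)/(2\pr\nr)\bigr)$ are read off directly. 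Your approach retains the full $\min\{\log(1+aL),\,L\log(1+d)\}$ and handles the potential crossover between the two cut-set terms via the discrete-increment bound $N(L{+}1)-N(L)\ge\min\{\Delta_1,\Delta_2\}\ge\Delta_S$. What this buys you is a statement about the actual gap $N(L)-S(L)$ rather than about an upper bound on it (in regimes where $d<a$ the decoder cut can bind for small $L$, so the two quantities differ there); the price is a longer argument in place of the paper's one-line monotonicity check. Both routes coincide for large $L$, since the encoder cut eventually dominates the $\min$, which is why the limits agree.
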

\begin{proof}
For $\pr^i=\pr, \nr^i=\nr$, the R.H.S. of \eqref{eq:thmParallel_suff} is evaluated as $\Gamma_{\text{suf}}:=\frac{1}{4}\log\left(1+\frac{4L\ps\pr}{2\ps\nd+2\pr\nr+\nd\nr}\right)$ and the R.H.S of \eqref{eq:thmParallel_nec} can be bounded as $\Gamma_{\text{nec}}:=\frac{1}{4}\log\left(1+\frac{2L\ps}{\nr}\right)$. The gap is given by
\begin{align}
\Gamma_{\text{nec}}-\Gamma_{\text{suf}}=\frac{1}{4}\log\left(1+ \frac{2\ps \nd\left(2\ps+\nr\right)}{4\ps\pr\nr+\frac{\nr\left(2\ps\nd+2\pr\nr+\nd\nr\right)}{L}}\right),
\end{align}
which is an increasing function of $L$, approaching $\frac{1}{4}\log\left(1+\frac{\nd\left(2\ps+\nr\right)}{2\pr\nr}\right)$ as $L\rightarrow\infty$.
\end{proof}
\begin{remark}
If $N^i_d=0$, then $\Gamma_{\text{nec}}-\Gamma_{\text{suf}}=0$ and the linear scheme is exactly optimal. For $\nr^i=0$, $\Gamma_{\text{suf}}:=\frac{1}{4}\log\left(1+\frac{2L\pr}{\nd}\right)$ and $\Gamma_{\text{nec}}:=\frac{L}{4}\log\left(1+\frac{2\pr}{\nd}\right)$ according to \eqref{eq:thmParallel_nec}. Clearly $\lim_{L \rightarrow \infty}\left(\Gamma_{\text{nec}}-\Gamma_{\text{suf}}\right)=\infty$, showing the inefficiency of the LTI scheme for parallel channels.
\end{remark}

It is known that
linear schemes can be sub-optimal for transmission over parallel channels \cite{VaishampayanThesis,WernerssonSkoglund09}.
A distributed joint source--channel code is optimal in
minimizing mean-square distortion if the following two conditions hold
\cite{ShamaiZamir98}: i) All channels from the source to the
destination send independent information; ii) All channels utilize the
capacity, i.e., the source and channel need to be matched. If we use
linear policies at the relay nodes then the first condition is not
fulfilled because all nodes would be transmitting correlated
information. In \cite{YukselTatikonda09} the authors proposed a
non-linear scheme for a parallel network of two sensors without
relays, in which one sensor transmits only the magnitude of the
observed state and the other sensor transmits only the phase of the
observed state. The magnitude and phase of the state are shown to be
independent and thus the scheme fulfills the first condition of
optimality. This nonlinear sensing scheme is shown to outperform the
best linear scheme for the LQG control problem in the absence of
measurement noise, although the second condition of source-channel
matching is not fulfilled. We can use this non-linear scheme together
with the initialization step of the Schalkwijk Kailath (SK) type scheme described in Appendix \ref{sec:ProofHalfDuplex} for the non-orthogonal network,
which will ensure source-channel matching by
making the outputs of the two sensors Gaussian distributed after the
initial transmissions. In \cite{MattiasZaidi11} it is shown that linear sensing policies may not
be even person-by-person optimal for LQG control over parallel network without
relays.

For the special case of \emph{parallel network} with noiseless $\e-\r_i$ links, we have the following necessary and sufficient condition for mean-square stability.
\begin{theorem}\label{thm:parallelNoiseless}
The system (\ref{eq:stateEquation}) in absence of process noise ($W_t=0$) can be mean square stabilized over the Gaussian \emph{parallel network} with $Z^i_{r,t}=0$ for all $i$, only if
\begin{align}
\log\left(\left|\dt\left(A\right)\right|\right)\leq\frac{1}{4}\max_{\pr^i: \pr\geq0,\sum_i\pr^i\leq\pra}\sum^L_{i=1}\log\left(1+\frac{2\pr^i}{\nd^i}\right).
\end{align}
If the inequality is strict, then there exists a non-linear policy leading to mean-square stability.
\end{theorem}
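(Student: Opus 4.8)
\emph{Sketch of proof.} The statement has a converse part (stabilizability implies the inequality) and an achievability part (strict inequality implies stabilizability), which I would establish in turn.

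\emph{Converse.} This is obtained as the limit $\nr^i\to 0^{+}$ of Theorem~\ref{thm:parallelNec}. Inspecting the directed-information bound \eqref{eq:directedInfoParallel}, with noiseless first-hop links the term $\sum_t I(S_{e,t};\{Y^i_t\}^L_{i=1})$ imposes no finite constraint (a power-limited but noiseless real link carries unbounded information, so this mutual information is $+\infty$), hence the only binding bound is the one through the relay-to-controller parallel channel, $\sum_t I(\{S^i_{r,t}\}^L_{i=1};\{R^i_t\}^L_{i=1})$. Maximizing the latter over Gaussian inputs and over the power split $\sum_i\pr^i\le\pra$ gives $\tfrac{T}{2}\max_{\pr^i:\,\sum_i\pr^i\le\pra}\sum_i\log(1+2\pr^i/\nd^i)$ across the $T$ two-slot rounds (the maximizer being the water-filling allocation); substituting into Theorem~\ref{thm:NecGeneral} and dividing by $2T$ yields the claimed inequality. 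Note this half does not use $W_t=0$.

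\emph{Achievability.} First I would reduce the problem. Because $Z^i_{r,t}=0$ each relay observes $S_{e,t}$ exactly, so all relays share the encoder's complete transmission history and any joint, possibly non-linear and time-varying, strategy for the $L$ inputs of the parallel Gaussian channel $\{S^i_{r,t}\to R^i_t\}$ is implementable; the remaining task is to stabilize the noiseless plant $X_{t+1}=AX_t+BU_t$ over $L$ parallel Gaussian channels (noises $\nd^i$, powers $2\pr^i$, one use of each per two-slot round) with noiseless feedback. Since $W_t=0$ we have $X_t=A^tX_0+\sum_{s<t}A^{t-1-s}BU_s$ and the controller computes the second term from its own past actions, so it suffices to feed it a refining estimate of $X_0$ whose residual-uncertainty region shrinks, in log-volume, faster than $\log|\dt(A)|$ per time step. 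Fix $R$ with $\log|\dt(A)|<R<\tfrac{1}{4}\max_{\pr^i:\,\sum_i\pr^i\le\pra}\sum_i\log(1+2\pr^i/\nd^i)=:\tfrac{1}{2} C_\Sigma$, where $C_\Sigma$ is the sum capacity per two-slot round of the parallel channel under the water-filling split. Operating in blocks of $m$ rounds, in each block the encoder --- which via the noiseless feedback knows the controller's current estimate and hence the residual region of $X_0$ --- quantizes that region (with an $A$-adapted quantizer, bits split across the decoupled modes of $A$ in proportion to $\log|\lm_i|$) to about $2mR$ bits and transmits the index over the parallel channel using independent, capacity-approaching Gaussian codebooks on the $L$ components. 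The independent-codebook structure is precisely the non-linearity that makes the $L$ sub-channels carry independent information, in accordance with the optimality principle recalled after Theorem~\ref{thm:parallel_suff}, and by the coding theorem for parallel Gaussian channels the per-block error probability vanishes as $m\to\infty$. As $R<\tfrac{1}{2} C_\Sigma$, the residual region of $X_0$ contracts by a fixed factor $<1$ per block even after the $|\dt(A)|^{2m}$ growth of the state over a block, and setting $U_t$ to cancel the controller's current estimate of $A^tX_0$ keeps $\ex[\|X_t\|^2]$ bounded.

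\emph{Main obstacle.} The only genuinely delicate point is keeping the second moment bounded through the rare decoding-error events without an anytime-capacity assumption on the channel code. The plan is to use that $W_t=0$ and the feedback is noiseless: the encoder detects a mis-decoding exactly, the decoded point still lies in the current (bounded, indeed geometrically shrinking) search region so the excess error is bounded, and it is reabsorbed over subsequent blocks, so $\ex[\|X_t\|^2]$ is dominated by a convergent geometric series; the unbounded support of $X_0$ is handled by an initial zooming/overload-robust quantization step of standard type. This reasoning fails if a process noise or a noisy feedback link is present, which is why achievability is claimed only for the noiseless plant.
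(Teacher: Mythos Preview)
Your converse is fine and is exactly what the paper does: specialize Theorem~\ref{thm:parallelNec} to $\nr^i=0$, whereupon the broadcast cut becomes vacuous and only the parallel relay--controller cut survives.

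Your achievability, however, takes a different route from the paper and contains a real gap. The paper does \emph{not} use block coding: for the scalar case it simply invokes the delay-free non-linear scheme of \cite[Theorem~6]{KumarLaneman11}, which at every round maps the current (scalar) state into $L$ channel inputs carrying independent information and yields a per-step second-moment contraction; the extension to vector $A$ is then the time-sharing construction of Section~\ref{sec:multiDimension}. No block-length, no decoding-error analysis.

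Your block-coding argument breaks precisely at the ``convergent geometric series'' step you flag as the obstacle. Track the controller's residual $r_k=X_0-\hat X_0^{(k)}$: on a successful block it contracts by $2^{-2mR}$, on a failed block it does not contract (the decoded point lies in the current cell, so $|r_k|\le d_k$ and $d_{k+1}=d_k$). The state at the end of block $k$ satisfies $\|X_{2mk}\|\asymp\|A^{2mk}r_k\|$, hence for a scalar mode with eigenvalue $\lambda$,
\[
\ex\!\big[|X_{2mk}|^2\big]\ \lesssim\ d_0^2\Big(|\lambda|^{4m}\big((1-\epsilon_m)2^{-4mR}+\epsilon_m\big)\Big)^{k},
\]
and boundedness needs $\epsilon_m|\lambda|^{4m}<1$. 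That is an \emph{error-exponent} condition, not a rate condition: for $|\lambda|$ close to the threshold and $R$ correspondingly close to capacity, the random-coding exponent of your ``capacity-approaching Gaussian codebooks'' tends to zero and the inequality fails. Perfect error detection via feedback does not help here --- the damage is the $|\lambda|^{2m}$ growth during the failed block, and retransmission only turns the same bound into an MGF of a geometric waiting time, which diverges under the very same condition $\epsilon_m|\lambda|^{4m}\ge 1$. In short, you have rediscovered why anytime reliability, not Shannon capacity, governs mean-square stabilization with block codes; your claim to avoid it does not go through.

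The clean fixes are either (i) replace the forward codebooks by feedback codes (Schalkwijk--Kailath on each sub-channel, which is legitimate here since the noiseless $\e$--$\r_i$ links let the encoder pass the fed-back $R^i_t$ to every relay) so that $\epsilon_m$ decays doubly exponentially and the condition is met for every $|\lambda|$ below threshold, or (ii) drop block coding altogether and use the per-step non-linear mapping the paper cites. Either way, the proof you wrote does not yet establish mean-square stability.
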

\begin{proof}
The necessity follows from Theorem \ref{thm:parallelNec}. The sufficiency part for scalar systems follows from \cite[Theorem 6]{KumarLaneman11}, which is derived using a non-linear scheme. This scheme can be extended to vector systems using a time sharing scheme presented in Sec.~\ref{sec:multiDimension}.
\end{proof}

\begin{remark}\label{rem:parallelRelay}
According to Theorem \ref{thm:parallelNoiseless} the minimum rate required for stabilization of a noisy plant over a parallel Gaussian channel is equal to the channel capacity. It was shown by Shu and Middleton in \cite{ShuMiddleton11} that for some first order noiseless plants, linear time invariant encoders/decoders cannot achieve this minimum rate over parallel Gaussian channels. However the minimum rate for stabilization can always be achieved by a non-linear time varying scheme as discussed in the proof of Theorem \ref{thm:parallelNoiseless}.
\end{remark}

\section{Non-orthogonal Network}\label{sec:NonOrthogonal}
A communication network is said to be \emph{non-orthogonal} if all the communicating nodes transmit signals in overlapping time slots using the same frequency bands. A node which is capable of transmitting and receiving signals simultaneously using the same frequency band is known as \emph{full-duplex} while a \emph{half-duplex} node cannot simultaneously receive and transmit signals. In practice it is expensive and hard to a build a communication device which can transmit and receive signals at the same time using the same frequency, due to the self-interference created by the transmitted signal to the received signal. Therefore half-duplex systems are mostly used in practice. In this section we study both half-duplex and full-duplex configurations.

\subsection{Non-orthogonal Half-duplex Network}\label{sec:HalfDuplex}
\begin{figure}[!t]
    \centering
    \subfigure[First transmission phase.]{
    \psfrag{p}[][][3]{\begin{sideways}Plant\end{sideways}}
    \psfrag{e}[][][3]{$\mathcal{E}$}
    \psfrag{d}[][][3]{$\mathcal{D}$}
    \psfrag{rn}[][][3]{$\mathcal{R}_L$}
    \psfrag{r1}[][][3]{$\mathcal{R}_1$}
    \psfrag{r2}[][][3]{$\mathcal{R}_2$}
    \psfrag{r3}[][][3]{$\mathcal{R}_3$}
    \psfrag{ch}[][][2]{AWGN Relay Channel}
    \psfrag{nfch}[][][2]{\begin{sideways}Noiseless Feedback Communication Channel\end{sideways}}
    \psfrag{z}[][][2]{$Z_{d,t}$}
    \psfrag{z1}[][][2]{$Z^1_{r,t}$}
    \psfrag{z2}[][][2]{$Z^2_{r,t}$}
    \psfrag{zn}[][][2]{$Z^L_{r,t}$}
    \psfrag{y1}[][][2]{$Y^1_{t}$}
    \psfrag{y2}[][][2]{$Y^2_{t}$}
    \psfrag{ym}[][][2]{$Y^L_{t}$}
    \psfrag{y}[][][2]{$Y_{t}$}
    \psfrag{se}[][][2]{$S_{e,t}$}
    \psfrag{sr1}[][][2]{$S^1_{r,t}$}
    \psfrag{sr2}[][][2]{$S^2_{r,t}$}
    \psfrag{sr3}[][][2]{$S^L_{r,t}$}
    \psfrag{h1}[][][2]{$h_1$}
    \psfrag{h2}[][][2]{$h_2$}
    \psfrag{hn}[][][2]{$h_L$}
    \psfrag{h}[][][2]{$h$}
    \psfrag{st}[][][2.5]{$S_t$}
    \psfrag{r}[][][2]{$R_t$}
    \psfrag{ut}[][][2.5]{$U_t$}
    \resizebox{7.7 cm}{!}{\epsfbox{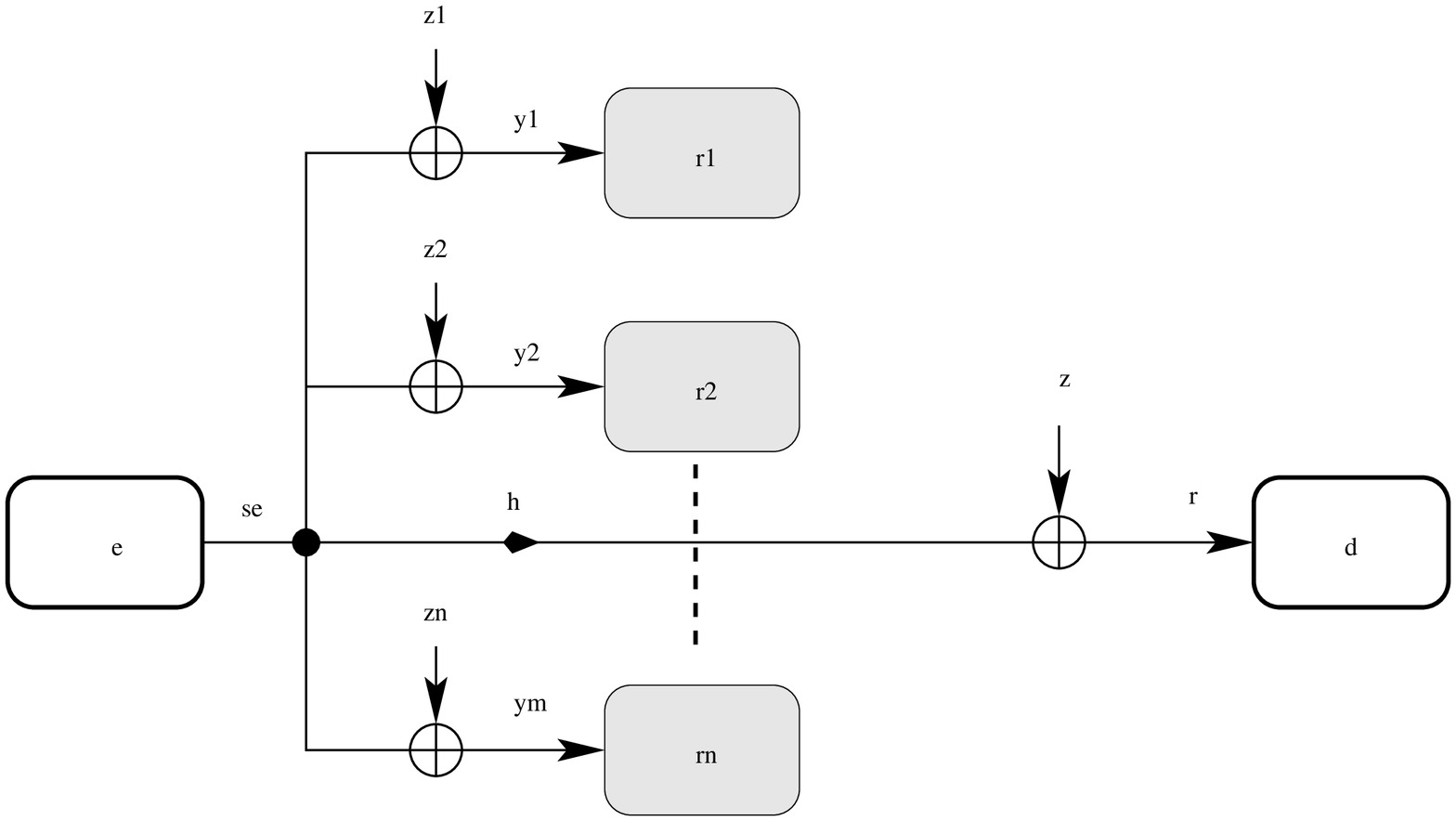}}
    \label{fig:phaseOne}}
    \vspace{1cm}
    \subfigure[Second transmission phase.]{
    \centering
    \psfrag{p}[][][3]{\begin{sideways}Plant\end{sideways}}
    \psfrag{e}[][][3]{$\mathcal{E}$}
    \psfrag{d}[][][3]{$\mathcal{D}$}
    \psfrag{rn}[][][3]{$\mathcal{R}_L$}
    \psfrag{r1}[][][3]{$\mathcal{R}_1$}
    \psfrag{r2}[][][3]{$\mathcal{R}_2$}
    \psfrag{r3}[][][3]{$\mathcal{R}_3$}
    \psfrag{ch}[][][2]{AWGN Relay Channel}
    \psfrag{nfch}[][][2]{\begin{sideways}Noiseless Feedback Communication Channel\end{sideways}}
    \psfrag{z}[][][2]{$Z_{d,t}$}
    \psfrag{z1}[][][2]{$Z^1_{r,t}$}
    \psfrag{z2}[][][2]{$Z^2_{r,t}$}
    \psfrag{zn}[][][2]{$Z^L_{r,t}$}
    \psfrag{y1}[][][2]{$Y^1_{t}$}
    \psfrag{y2}[][][2]{$Y^2_{t}$}
    \psfrag{ym}[][][2]{$Y^L_{t}$}
    \psfrag{y}[][][2]{$Y_{t}$}
    \psfrag{se}[][][2]{$S_{e,t}$}
    \psfrag{sr1}[][][2]{$S^1_{r,t}$}
    \psfrag{sr2}[][][2]{$S^2_{r,t}$}
    \psfrag{sr3}[][][2]{$S^L_{r,t}$}
    \psfrag{h1}[][][2]{$h_1$}
    \psfrag{h2}[][][2]{$h_2$}
    \psfrag{hn}[][][2]{$h_L$}
    \psfrag{h}[][][2]{$h$}
    \psfrag{st}[][][2.5]{$S_t$}
    \psfrag{r}[][][2]{$R_t$}
    \psfrag{ut}[][][2.5]{$U_t$}
    \resizebox{7.8 cm}{!}{\epsfbox{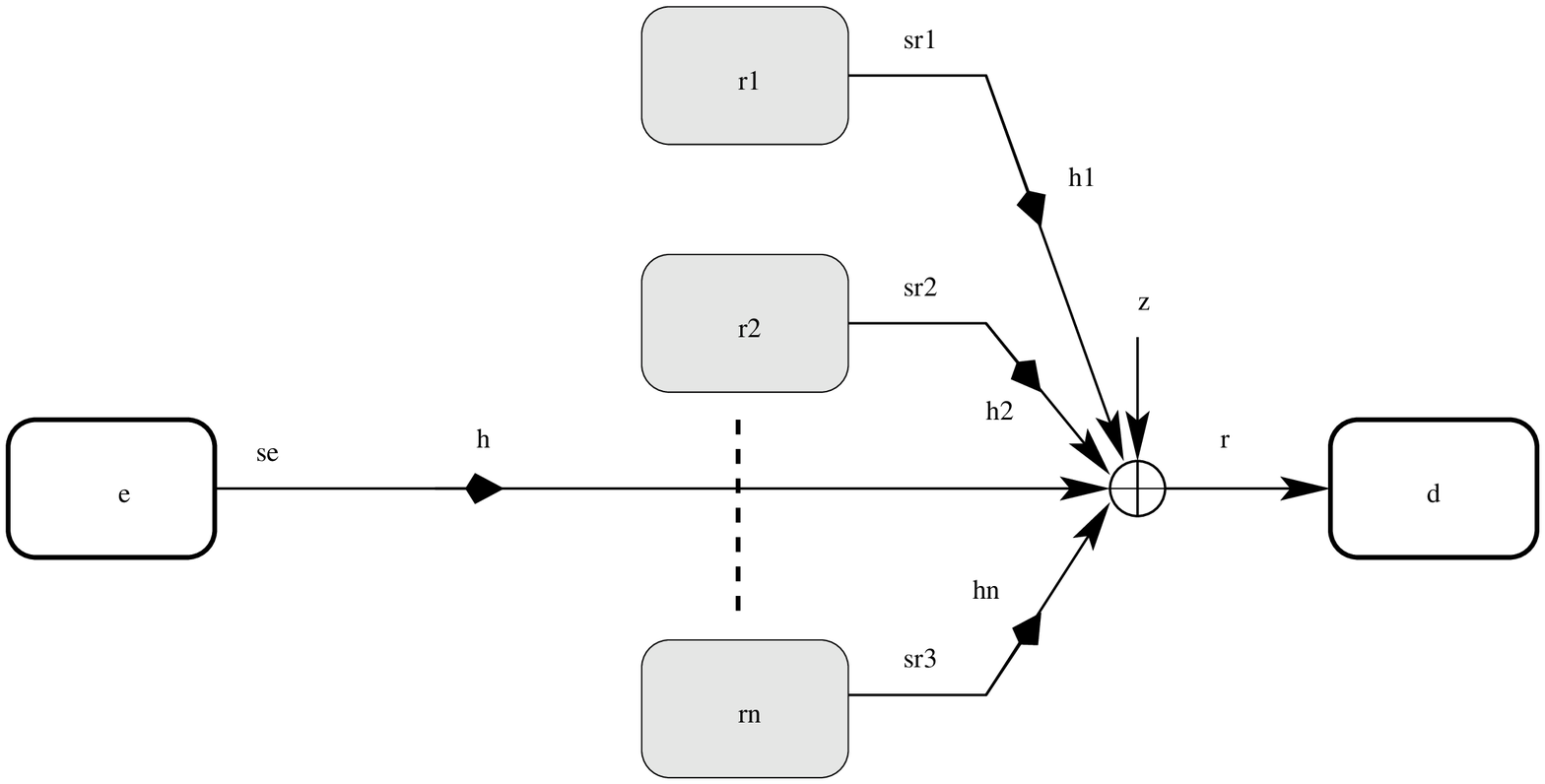}}
    \label{fig:phaseTwo}}
    \vspace{-.8cm}
   \caption{A non-orthogonal half-duplex Gaussian network model.}
   \label{fig:HalfDuplexNetwork}
 \vspace{5mm}
\end{figure}

A non-orthogonal half-duplex Gaussian network with $L$ relay nodes
$\{\r_i\}^L_{i=1}$ is illustrated in
Fig.~\ref{fig:HalfDuplexNetwork}. The variables $S_{e,t}$ and
$S^i_{r,t}$ denote the transmitted signals from the state encoder $\e$
and relay $\r_i$ at any discrete time step $t$. The variables
$Z^i_{r,t}$ and $Z_{d,t}$ denote the mutually independent white
Gaussian noise components at the relay node $i$ and $\d$ of the remote
control unit, with $Z^i_{r,t}\sim\mathcal{N}(0,N^i_r)$ and
$Z_{d,t}\sim\mathcal{N}(0,N_d)$. The noise components
$\{Z^i_{r,t}\}^L_{i=1}$ are independent across the relays, i.e.,
$\ex[Z^k_{r,t}Z^i_{r,t}]=0$ for all $i\neq k$. The information
transmission from the state encoder consists of two phases as shown in
Fig.~\ref{fig:HalfDuplexNetwork}. In the first phase the encoder
$\mathcal{E}$ transmits a signal with an average power $2\beta \ps$,
where $0<\beta\leq1$ is a parameter that adjusts power between the two
transmission phases. In this transmission phase all the relay nodes
listen but remain silent. In the second transmission phase, the
encoder $\e$ and relay nodes $\{\r_i\}^L_{i=1}$ transmit
simultaneously. In this second transmission phase, the encoder
transmits with an average power $2(1-\b)\ps$ and the $i$-th relay node
transmits with an average power $2\pr^i$ such that
$\sum^L_{i=1}\pr^i\leq P_R$. The input and output of the $i$-th relay
are given by,
\begin{align}\label{eq:halfDupRelay_InOutEq}
&Y^i_t=S_{e,t}+Z^i_{r,t}, \quad S^i_{r,t}=0, \qquad &t=1,3,5,\dots \nonumber \\
&Y^i_t=0, \quad S^i_{r,t}=g^i_t\left(Y^i_{[0,t-1]}\right), \qquad &t=2,4,6,\dots
\end{align}
where $g^i_t: \mathbb{R}^{t+1}\rightarrow\mathbb{R}$ is the $i$-th relay encoding policy such that $\ex\left[\left(g^i_t(Y^i_{[0,t-1]})\right)^2\right]=2\pr^i$ and $\sum^L_{i=1}\pr^i\leq P_R$.
The signal received at the decoder/controller is given by
\begin{align}
R_t&=h S_{e,t}+\sum^L_{i=1} h_iS^i_{r,t}+Z_{d,t}, \nonumber
\end{align}
where $h,h_i\in\mathbb{R}$ denote the channel gains of $\e-\d$ and $\r_i-\d$ links respectively.

\begin{theorem}\label{thm:NonOrthHalf_Nec}
If the linear system in \eqref{eq:stateEquation} is mean-square stable over the
\emph{non-orthogonal half-duplex} relay network, then
\begin{align}\label{eq:thmNonOrthHalf_Nec}
&\log\left(|\dt\left(A\right)|\right)\leq \frac{1}{4}\min\Bigg\{\max_{\begin{subarray}{c} 0<\b\leq 1\end{subarray}}\left( \log\left(1+\frac{2h^2(1-\b)\ps}{\nd}\right)+\log\left(1+2\b\ps\left(\sum^L_{i=1}\frac{1}{\nr^i}+\frac{h^2}{N_d}\right)\right)\right), \nonumber \\
&\max_{\begin{subarray}{c} 0<\b\leq1 \\ \pr^i: \sum_i \pr^i \leq\pra \end{subarray}}\left( \log\left(1\!+\!\frac{2h^2\b\ps}{\nd}\right)\!+\! \log\left(1\!+\!\frac{1}{\nd}\left(\!\sum^{L+1}_{i=1}\!\delta^2_iP_i\!+\!2\!\sum^{L+1}_{i=1}\sum^{L+1}_{k=i+1}\!\rho^\star_{i,k}\delta_i\delta_k\sqrt{P_i P_k}\right)\right)\right)\Bigg\},
\end{align}
where $\rho^\star_{i,k}:=\frac{2(1-\b)\ps}{\sqrt{(2(1-\b)\ps+N_i)(2(1-\b)\ps+N_k)}}$, $P_{L+1}:=2(1-\b)\ps$, $N_{L+1}:=0$, $\delta_{L+1}:=h$, $P_i:=2\pr^i$, $\delta_i:=h_i$, $N_i:=\nr^i$ for all $i=\{1,2,\dots,L\}$.
\end{theorem}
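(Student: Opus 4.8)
The plan is to invoke Theorem~\ref{thm:NecGeneral} and to bound the directed information $I(\bar{X}_{[1,2T]}\rightarrow R_{[1,2T]})$ over the half-duplex network in two different ways, corresponding to the two arguments of the outer $\min$ in \eqref{eq:thmNonOrthHalf_Nec}; the necessary condition then follows by dividing by $2T$ and letting $T\rightarrow\infty$. In both bounds the first step is the same chain of manipulations used in \eqref{eq:boundNec1Cascade}: using Massey's identity $I(\bar{X}\rightarrow R)=I(\bar{X};R)$ (the feedback enters only through the controls, which are stripped out in $\bar X$), adding side information, and splitting the per-letter terms into an ``odd-slot'' (listening) contribution and an ``even-slot'' (relaying) contribution according to \eqref{eq:halfDupRelay_InOutEq}. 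Because a relay only transmits in even slots and only listens in odd slots, the directed information telescopes into a sum over the two phases, and each phase term is upper-bounded by a single-letter mutual information of a Gaussian multiple-access/broadcast configuration.

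For the \emph{first} bound I would add the relay observations $\{Y^i_t\}_{i=1}^L$ as genuine side information at the decoder, so that the odd-slot term becomes $I\!\left(S_{e,2t-1};\{Y^i_{2t-1}\}_{i=1}^L, \text{(nothing new at }R)\right)$ — a point-to-point channel with $L$ parallel Gaussian observations of the encoder signal of power $2\beta P_S$, together with the direct link to $\mathcal D$ of gain $h$, giving the term $\tfrac12\log\!\bigl(1+2\beta P_S(\sum_i 1/\nr^i + h^2/N_d)\bigr)$ after choosing the worst-case Gaussian input. The even-slot term is then just the direct $\mathcal E\!-\!\mathcal D$ link carrying the remaining encoder power $2(1-\beta)P_S$, giving $\tfrac12\log\!\bigl(1+2h^2(1-\beta)P_S/N_d\bigr)$; maximizing over $\beta$ and dividing by $2T$ yields the first argument of the $\min$. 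For the \emph{second} bound I would instead keep the received signal $R_t$ itself and bound the even-slot term by the capacity of the Gaussian multiple-access-with-common-message channel formed by the $L$ relays (each with input power $2\pr^i$, gain $h_i$) plus the encoder's second-phase signal (power $2(1-\beta)P_S$, gain $h$), all adding coherently at $\mathcal D$ over noise $N_d$. Bounding the sum-rate of that channel by the entropy of a single Gaussian whose variance is the variance of the sum $\sum_i h_i S^i_{r,t} + h S_{e,t}$, and noting that the pairwise correlations between $S^i_{r,t}$ and $S^k_{r,t}$ (both deterministic functions of their respective noisy observations $S_{e,2t-1}+Z^i_{r,\cdot}$ of the common encoder signal) are bounded in magnitude by $\rho^\star_{i,k}$ — the correlation coefficient of two jointly Gaussian observations of a common source of power $2(1-\beta)P_S$ through independent noises $N_i,N_k$ — gives exactly the quadratic form $\sum_i \delta_i^2 P_i + 2\sum_{i<k}\rho^\star_{i,k}\delta_i\delta_k\sqrt{P_iP_k}$ inside the $\log$. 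The odd-slot term contributes $\tfrac12\log(1+2h^2\beta P_S/N_d)$ from the direct link alone, and maximizing over $\beta$ and the power split $\{\pr^i\}$ finishes the second argument.

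The main obstacle is the correlation bound in the second branch: one must show that for \emph{arbitrary} (possibly nonlinear, memory-ridden) relay policies $g^i_t$, the normalized cross-correlation $\ex[S^i_{r,t}S^k_{r,t}]/\sqrt{\ex[(S^i_{r,t})^2]\ex[(S^k_{r,t})^2]}$ cannot exceed $\rho^\star_{i,k}$. This is the half-duplex analogue of a ``correlation cannot be manufactured'' argument: $S^i_{r,t}$ and $S^k_{r,t}$ are conditionally independent given the encoder's transmitted sequence, and each is a noisy-channel-degraded version of that sequence, so a data-processing / MMSE-type inequality pins the achievable correlation. Making this rigorous for the general (non-Gaussian, nonlinear) case — rather than just for linear relays — is the delicate point; one route is to first condition on $S_{e,[1,2t-1]}$, use conditional independence to factor the covariance, and then apply the fact that $\mathrm{Var}(\ex[S^i_{r,t}\mid S_{e,[1,2t-1]}])\le \mathrm{Var}(\ex[S^i_{r,t}\mid Y^i_{[1,2t-1]}])\le$ the relay's output power times the correlation of the best linear estimator, invoking Gaussianity of $Z^i_{r,\cdot}$ only where it controls the estimation-theoretic bound. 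Once that inequality is in hand, everything else is the routine ``Gaussian maximizes entropy under a covariance constraint'' step, exactly as in steps $(f)$–$(j)$ of \eqref{eq:boundNec1Cascade}.
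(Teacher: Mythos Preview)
Your plan is essentially the paper's own proof: the same two cut-set bounds (a broadcast cut with the relay observations revealed to $\mathcal D$, and a multiple-access cut into $\mathcal D$), each split into odd/even phase contributions, then single-letter Gaussian maximization and division by $2T$.

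Two points of comparison are worth naming. First, for the broadcast cut the paper makes your ``nothing new at $R$'' step explicit by defining $\tilde R_t := R_t - \sum_i h_i S^i_{r,t}$; this is legitimate once $\{Y^i_{[1,t-1]}\}$ is side information (since $S^i_{r,t}=g^i_t(Y^i_{[1,t-1]})$) and it collapses the channel to a genuinely memoryless one from $S_{e,t}$ to $(\tilde R_t,\{Y^i_t\})$, making the Markov chains you need transparent. Second, and more to the point, the paper does \emph{not} derive the correlation inequality you single out as the main obstacle. It simply invokes eq.~(48) of Gastpar's Gaussian sensor-network upper bound \cite{GastparITA07}, casting the even-slot encoder as an $(L{+}1)$st sensor with zero observation noise and recording the dictionary $\sigma_S^2:=\alpha_t$, $M:=L{+}1$, $P_i:=2P_r^i$, $\sigma^2_Z:=N_d$, $\sigma^2_{W,i}:=N_r^i$, $\alpha_i:=\sqrt{2(1-\beta)P_S/\alpha_t}$. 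So your ``delicate point'' can be bypassed by the same citation rather than a fresh data-processing argument; the form of $\rho^\star_{i,k}$ with $2(1-\beta)P_S$ (rather than the $2\beta P_S$ listening power you mention) is exactly what drops out of that parametrization via the $\alpha_i$ scaling, not a statement about the odd-slot signal power per se.
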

\begin{proof}
We first derive an outer bound on the directed information $I(\bar{X}_{[1,LT]} \rightarrow R_{[1,LT]})$ over the given channel and then use Theorem \ref{thm:NecGeneral} to find the necessary condition \eqref{eq:thmNonOrthHalf_Nec}.
\begin{align}\label{eq:nonOrthHalf_Bnd1}
&I(\bar{X}_{[1,2T]} \rightarrow R_{[1,2T]})\stackrel{(a)}{=}I\left(\bar{X}_{[1,2T]} ; R_{[1,2T]}\right)\stackrel{(b)}{\leq}I(\bar{X}_{[1,2T]}; \{Y^i_{[1,2T]}\}^L_{i=1}, R_{[1,2T]}) \nonumber \\
&\stackrel{(c)}{=}I\left(\bar{X}_{[1,2T]}; \tilde{R}_{[1,2T]}, \{Y^i_{[1,2T]}\}^L_{i=1}\right)= \sum^{2T}_{t=1}I(\bar{X}_{[1,2T]} ; \tilde{R}_t, \{Y^i_{t}\}^L_{i=1} | \tilde{R}_{[1,t\sm1]}, \{Y^i_{[1,t\sm1]}\}^L_{i=1}) \nonumber \\
&\stackrel{(d)}{\leq}\sum^{2T}_{t=1}I(S_{e,t} ; \tilde{R}_t, \{Y^i_{t}\}^L_{i=1} | \tilde{R}_{[1,t\sm1]}, \{Y^i_{[1,t\sm1]}\}^L_{i=1})\stackrel{(e)}{\leq} \sum^{2T}_{t=1}I\left(S_{e,t}; \tilde{R}_{t}, \{Y^i_{t}\}^L_{i=1}\right) \nonumber \\
&\stackrel{(f)}{=}\sum^{T}_{t=1}I\left(S_{e,2t}; \tilde{R}_{2t}\right)+\sum^{T}_{t=1}I\left(S_{e,2t\sm 1}; \tilde{R}_{2t\sm 1},\{Y^i_{2t\sm 1}\}^L_{i=1}\right)  \nonumber \\
&\stackrel{(g)}{\leq} \frac{T}{2} \log\left(1+\frac{2h^2(1-\b)\ps}{\nd}\right) +\frac{T}{2}\log\left(1+2\b\ps\left(\sum^L_{i=1}\frac{1}{\nr^i}+\frac{h^2}{N_d}\right)\right) \nonumber \\
& \stackrel{}{\leq} \frac{T}{2} \max_{\begin{subarray}{c} 0<\b\leq 1\end{subarray}}\left\{ \log\left(1+\frac{2h^2(1-\b)\ps}{\nd}\right)+\log\left(1+2\b\ps\left(\sum^L_{i=1}\frac{1}{\nr^i}+\frac{h^2}{N_d}\right)\right)\right\}
\end{align}
where $(a)$ follows from \cite[Theorem 1]{Massey90}; $(b)$ follows from the fact that adding side information cannot decrease mutual information; $(c)$ follows by defining $\tilde{R}_{t}:=R_t-\sum^L_{i=1} h_iS^i_{r,t}$ and from the fact that $S^i_{r,t}$ is a function of $Y^i_{[1,t\sm1]}$; $(d)$ follows from the Markov chain $\bar{X}_{[1,2T]}-S_{e,t}-(\tilde{R}_{t},\{Y^i_{t}\}^L_{i=1})$, since $\bar{X}_{[0,T]}$ is the uncontrolled state process and the fact that the channel between $S_{e,[1,2T]}$ and $(\tilde{R}_{[1,2T]}, \{Y^i_{[1,2T]}\}^L_{i=1})$ is memoryless due to $\tilde{R}_{t}=R_t-\sum^L_{i=1} h_iS^i_{r,t}$; $(e)$ follows from the Markov chain $(\tilde{R}_{[1,t\sm1]}, \{Y^i_{[1,t\sm1]}\}^L_{i=1})-S_{e,t}-(\tilde{R}_{t}, \{Y^i_{t}\}^L_{i=1})$ and conditioning reduces entropy; $(f)$ follows by separating odd and even indexed terms and $Y^i_{2t}=0$ according to \eqref{eq:halfDupRelay_InOutEq}; $(g)$ follows from $Y^i_{2t\sm1}=S_{e,2t\sm1}+Z^i_{r,2t\sm 1}$, $\tilde{R}_t=S_{e,t}+Z_t$, $\ex\left[S^2_{e,2t}\right]=2(1-\b)\ps$, $\ex\left[S^2_{e,2t\sm1}\right]=2\b\ps$, and the fact that mutual information of a Gaussian channel is maximized by centered Gaussian input distribution \cite{TseBook}. The directed information rate $I(\bar{X}_{[1,2T]} \rightarrow R_{[1,2T]})$ can also be bounded as,
\begin{align}\label{eq:nonOrthHalf_Bnd2}
&I(\bar{X}_{[1,2T]} \rightarrow R_{[1,2T]})=\sum^{2T}_{t=1}I(\bar{X}_{[1,t]} ; R_t| R_{[1,t\sm1]}) \stackrel{(a)}{\leq}\sum^{2T}_{t=1}I(S_{e,t},\{S^i_{r,t}\}^L_{i=1} ; R_t| R_{[1,t\sm1]})\nonumber \\
&\stackrel{(b)}{\leq} \sum^{2T}_{t=1}I\left(S_{e,t},\{S^i_{r,t}\}^L_{i=1}; R_{t}\right)\stackrel{(c)}{=}\sum^{T}_{t=1}I\left(S_{e,2t\sm 1}; R_{2t\sm 1}\right)+ \sum^{T}_{t=1}I\left(S_{e,2t},\{S^i_{r,2t}\}^L_{i=1}; R_{2t}\right)\nonumber \\
&\stackrel{(d)}{\leq} \frac{T}{2} \log\left(1+\frac{2h^2\b\ps}{\nd}\right)+ \frac{T}{2}\log\left(1\!+\!\frac{1}{\nd}\left(\!\sum^{L+1}_{i=1}\!\delta^2_iP_i\!+\!2\!\sum^{L+1}_{i=1}\sum^{L+1}_{k=i+1}\!\rho^\star_{i,k}\delta_i\delta_k\sqrt{P_i P_k}\right)\right)\nonumber \\
&\stackrel{}{\leq} \frac{T}{2} \max_{\begin{subarray}{c} 0<\b\leq1 \\ \pr^i: \sum_i \pr^i \leq\pra \end{subarray}}\left\{ \log\left(1+\frac{2h^2\b\ps}{\nd}\right)+ \log\left(1\!+\!\frac{1}{\nd}\left(\!\sum^{L+1}_{i=1}\!\delta^2_iP_i\!+\!2\!\sum^{L+1}_{i=1}\sum^{L+1}_{k=i+1}\!\rho^\star_{i,k}\delta_i\delta_k\sqrt{P_i P_k}\right)\right)\right\}
\end{align}
where $\rho^\star_{i,k}=\frac{2(1-\b)\ps}{\sqrt{(2(1-\b)\ps+N_i)(2(1-\b)\ps+N_k)}}$, $P_{L+1}=2(1-\b)\ps$, $N_{L+1}=0$, $\delta_{L+1}=h$, $P_i=2\pr^i$, $\delta_i=h_i$, $N_i=\nr^i$ for all $i=\{1,2,\dots,L\}$. The inequality $(a)$ follows from the Markov chain $\bar{X}_{[0,t]}-\left(S_{e,t},\{S^i_{r,t}\}^L_{i=1}\right)-R_t$ due to the memoryless channel between $S_{e,[1,2T]},\{S^i_{r,[1,2T]}\}^L_{i=1}$ and ${R}_{[1,2T]}$; $(b)$ follows from the Markov chain $R_{[1,t\sm1]}-\left(S_{e,t},\{S^i_{r,t}\}^L_{i=1}\right)-R_t$ and conditioning reduces entropy; $(c)$ follows by separating the odd and even indexed terms and $S^i_{r,2t\sm 1}=0$ according to \eqref{eq:halfDupRelay_InOutEq}; $(d)$ follows from the fact that the first addend on the R.H.S. of $(c)$ is maximized by a centered Gaussian distributed $S_{e,t}$ and the second addend is bounded using a bound presented in \cite{GastparITA07}, where the author studied the problem of transmitting a Gaussian source over a simple sensor network. In order to apply the upper bound given in (48) of \cite{GastparITA07} to our setup, we consider state encoder $\e$ to be a sensor node with zero observation noise and make the following change of system variables so that our system model becomes equivalent to the one discussed in \cite{GastparITA07}: $\sigma^2_S:=\a_t$, $\delta_i:=h_i$, $M:=L+1$, $P_i:=2\pr^i$, $\sigma^2_Z:=\nd$, $\sigma^2_{W,i}:=\nr^i$, $\a_i=\sqrt{\frac{2(1-\b)\ps}{\a_t}}$ for all $i$. We finally obtain \eqref{eq:thmNonOrthHalf_Nec} by dividing \eqref{eq:nonOrthHalf_Bnd1} and \eqref{eq:nonOrthHalf_Bnd2} by $2T$ and let $T\rightarrow \infty$ according to Theorem \ref{thm:NecGeneral}.
\end{proof}

We now present a sufficient condition for mean square stability of a scalar plant over the given network, which can be extended to a multi-dimensional plant using the arguments given in Sec.~\ref{sec:multiDimension}.
\begin{theorem}\label{thm:HalfDup} The scalar linear time invariant system in (\ref{eq:stateEquation}) with $A=\lm$ can be mean square stabilized using a linear scheme over the \emph{non-orthogonal half-duplex} network if
\begin{align}
\log\left(|\lm|\right)\!<\!\frac{1}{4}\max_{\begin{subarray}{c} 0<\b\leq1 \\ \pr^i: \sum_i \pr^i \leq\pra \end{subarray}}\left\{\log\left(1+\frac{2h^2\b\ps}{\nd}\right)+\log\left(1+\frac{\tilde{M}\left(\b,\left\{\pr^i\right\}^L_{i=1}\right)}{\nt\left(\b,\left\{\pr^i\right\}^L_{i=1}\right)}\right)\right\},
\label{eq:thmHalfDup}
\end{align}
where $\tilde{M}\left(\b,\left\{\pr^i\right\}^L_{i=1}\right)=\left(\sqrt{2h^2(1-\b)\ps}+\sqrt{\frac{2\b\ps\nd}{\left(2h^2\b\ps+\nd\right)}}
\left(\sum^L_{i=1}\sqrt{\frac{2h^2_i\pr^i}{2\b\ps+\nr^i}}\right)\right)^2$ and $\nt(\b,\left\{\pr^i\right\}^L_{i=1})=\sum^L_{i=1}\frac{2h^2_i\pr^i\nr^i}{2\b\ps+\nr^i}+\nd$ are real-valued functions.
\end{theorem}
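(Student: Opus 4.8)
\emph{Proof plan.}
The plan is to exhibit a linear (certainty‑equivalent, amplify‑and‑forward) scheme whose induced two‑step closed‑loop dynamics produce an estimation‑error variance recursion that is contractive precisely when \eqref{eq:thmHalfDup} holds, and then to read off the stated rate by optimizing over the free design parameters $\b$ and $\{\pr^i\}$.

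\textbf{The scheme.} I would use certainty‑equivalent control $U_t=-\tfrac{\lm}{B}\hat X_t$, where $\hat X_t$ is the controller's running linear least‑squares estimate of $X_t$; the closed loop then collapses to $X_{t+1}=\lm(X_t-\hat X_t)+W_t$, so mean‑square stability is equivalent to boundedness of the error variance $P_t:=\ex[(X_t-\hat X_t)^2]$. Group time into super‑steps, with $t=2n-1$ the \emph{listen} phase and $t=2n$ the \emph{relay} phase, and let $e_{2n-1}:=X_{2n-1}-\hat X_{2n-1}$ have variance $P_n$. In the listen phase the encoder sends a scalar multiple of $e_{2n-1}$ at power $2\b\ps$; the controller receives $R_{2n-1}=hS_{e,2n-1}+Z_{d,2n-1}$ and updates, leaving a residual $\epsilon$ with variance $P_n^{(1)}=P_n/\bigl(1+\tfrac{2h^2\b\ps}{\nd}\bigr)$, while each relay $i$ stores $Y^i_{2n-1}$ (total received power $2\b\ps+\nr^i$). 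Since the encoder can reconstruct the controller's update from the fed‑back controls $U_{[0,2n-1]}$ (equivalently, over the noiseless feedback link), exactly as in the Schalkwijk--Kailath construction, it can compute $\epsilon$; in the relay phase it transmits a scalar multiple of $\epsilon$ at power $2(1-\b)\ps$, while relay $i$ amplify‑and‑forwards, sending $\sqrt{2\pr^i/(2\b\ps+\nr^i)}\,Y^i_{2n-1}$ at power $2\pr^i$, with $\sum_i\pr^i\leq\pra$.

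\textbf{Coherent combining and the two‑step SNR.} The relay phase yields $R_{2n}=hS_{e,2n}+\sum_i h_i S^i_{r,2n}+Z_{d,2n}$. Decomposing $e_{2n-1}$ into the controller's known listen‑phase update plus the residual $\epsilon$, each amplify‑and‑forward term carries a \emph{known} component (proportional to that update), which the controller cancels, and a useful component that is coherent with the direct relay‑phase term. Because the listen phase scales the error by $\sqrt{2\b\ps/P_n}$, one has $\sqrt{P_n^{(1)}}\cdot\sqrt{2\b\ps/P_n}=\sqrt{2\b\ps\nd/(2h^2\b\ps+\nd)}$, and a short calculation shows the cleaned observation equals $G\epsilon+V$ with $\ex[V^2]=\sum_i h_i^2\tfrac{2\pr^i}{2\b\ps+\nr^i}\nr^i+\nd=\nt(\b,\{\pr^i\})$ and coherent signal power $G^2P_n^{(1)}=\tilde{M}(\b,\{\pr^i\})$ --- the latter being exactly the squared sum in the statement, namely the direct path $\sqrt{2h^2(1-\b)\ps}$ plus $\sqrt{2\b\ps\nd/(2h^2\b\ps+\nd)}\sum_i\sqrt{2h_i^2\pr^i/(2\b\ps+\nr^i)}$. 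Hence the relay phase conveys $\epsilon$ at signal‑to‑noise ratio $\tilde{M}/\nt$ and reduces the residual variance by the further factor $1+\tilde{M}/\nt$.

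\textbf{Stability, optimization, and the main obstacle.} Propagating the error through the two open‑loop steps of the super‑step (each multiplying the variance by $\lm^2$ and adding a bounded process‑noise term) would give a recursion of the form
\[
P_{n+1}=\frac{\lm^4\,P_n}{\left(1+\frac{2h^2\b\ps}{\nd}\right)\left(1+\frac{\tilde{M}(\b,\{\pr^i\})}{\nt(\b,\{\pr^i\})}\right)}+c_W ,
\]
with $c_W<\infty$ depending only on $K_W$ and $\lm$; for a strictly memoryless scheme the per‑phase gains are fixed to the values matched to the stationary variance (which exists under the hypothesis), and otherwise the deterministic sequence $P_n$ is tracked offline and the gains retuned each step. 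This affine recursion admits a finite positive fixed point, to which it converges, if and only if $\lm^4<\bigl(1+\tfrac{2h^2\b\ps}{\nd}\bigr)\bigl(1+\tfrac{\tilde{M}}{\nt}\bigr)$; then $\sup_n P_n<\infty$, and via $X_{t+1}=\lm(X_t-\hat X_t)+W_t$ also $\sup_t\ex[X_t^2]<\infty$. Taking logarithms, dividing by $4$, and maximizing over $0<\b\leq1$ and $\{\pr^i\}$ with $\sum_i\pr^i\leq\pra$ yields precisely \eqref{eq:thmHalfDup}. The delicate part will be the joint design of the relay amplification gains and the controller's cancellation step so that the direct listen‑phase path, the direct relay‑phase path, and the $L$ amplify‑and‑forward paths add \emph{coherently} in $\epsilon$ --- producing the squared‑sum numerator $\tilde{M}$ while the only residual degradation is the forwarded relay noise $\nt$; tracking the exact normalizations (relay $i$'s forwarded noise weighted by $\tfrac{2h_i^2\pr^i\nr^i}{2\b\ps+\nr^i}$, its signal amplitude by $\sqrt{\tfrac{2\b\ps\nd}{2h^2\b\ps+\nd}}\sqrt{\tfrac{2h_i^2\pr^i}{2\b\ps+\nr^i}}$) is what makes the achievable region meet the second branch of the necessary condition \eqref{eq:thmNonOrthHalf_Nec} in the appropriate regime. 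The auxiliary point that the encoder can emulate the controller's estimate from the fed‑back controls is handled exactly as in the point‑to‑point Schalkwijk--Kailath scheme (cf.\ Theorem~\ref{thm:cascadeNetwork_suff}).
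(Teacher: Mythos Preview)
Your proposal is correct and follows the same architecture as the paper --- certainty-equivalent control, amplify-and-forward relaying, two-phase super-steps, and MMSE combining --- arriving at the identical condition $\lm^4<(1+2h^2\b\ps/\nd)(1+\tilde M/\nt)$. The one substantive difference is what the encoder transmits in the relay phase: you have it send the \emph{residual} $\epsilon=X_{2n-1}-\hat X_{2n-1|2n-1}$ (reconstructed from the fed-back control $U_{2n-1}$), whereas the paper has it send the \emph{current state} $X_{2n}=\lm\epsilon+W_{2n-1}$. Because the paper's relay-phase signal carries the extra process-noise term $W_{2n-1}$, the innovation $I_{2n}$ contains a $W_{2n-1}$ component that cannot be cancelled; this leads to the nonlinear recursion \eqref{eq:noisyP_varSecondPhase} in $\alpha_{t-1}$, which the paper then dominates by an affine upper sequence via the technical bound of Lemma~\ref{lm:halfDuplex} ($f(\alpha)\le f_\infty+m/\alpha$). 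Your variant sidesteps this entirely: all relay-phase paths are coherent in $\epsilon$ with no $W$ cross-term, so the two-step recursion is already the clean affine map $P_{n+1}=\lm^4 P_n/[(1+2h^2\b\ps/\nd)(1+\tilde M/\nt)]+c_W$, and convergence follows immediately. The trade-off is that your scheme explicitly uses the fed-back controls $U_{[0,t-1]}$ in the encoder map (which the model allows), while the paper's encoder only uses the offline-computed variance $\alpha_t$ to set its gain; both are linear, both hit the same rate, but your analysis is shorter.
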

\begin{proof}
The proof is given in Appendix \ref{sec:ProofHalfDuplex}.
\end{proof}

\begin{remark}
An optimal choice of the power allocation parameter $\b$ at the state encoder and an optimal power allocation at the relay nodes $\{\pr^i\}^L_{i=1}$ which maximize the term on the right hand side of (\ref{eq:thmHalfDup}) depend on the quality of the $\e-\d$, $\e-\r_i$, and $\r_i-\d$ links. This is a non-convex optimization problem, however it can be transformed into an equivalent convex problem by using the approach in \cite[Appendix A]{XiaoGoldsmith08}. This equivalent convex problem can be efficiently solved for optimal $\{\pr^i\}^L_{i=1}$ using the interior point method. For $\b=1$, we can analytically obtain the following optimal power allocation using the Lagrangian method:
\begin{align} \label{eq:optimalPowerNonOrth}
\pr^i=\pra\left(\frac{h^2_i\left(2\ps+\nr^i\right)}{\left(2\ps\nd+\nr^i\nd+\pra h^2_i \nr^i\right)^2}\right)\left[\sum^L_{l=1}\frac{h^2_l\left(2\ps+\nr^l\right)}{\left(2\ps\nd+\nr^l\nd+\pra h^2_l \nr^l\right)^2}\right]^{-1}.
\end{align}
\end{remark}
\begin{remark}\label{rem:infoRate}
  For channels with feedback, directed information is a useful
  quantity \cite{Massey90,TatikondaMitter09}. It is shown in
  Appendix~\ref{appendixAchievability} that the term on the right hand
  side of (\ref{eq:thmHalfDup}) is the information rate over the
  half-duplex network with noiseless feedback, obtained when running
  the described closed-loop protocol. Further we show that the directed
  information rate is also equal to the term on the right hand side of
  (\ref{eq:thmHalfDup}).
\end{remark}

\subsection{Two-Hop Network}
Consider the half-duplex relay network illustrated in Fig.~\ref{fig:HalfDuplexNetwork} with $h=0$. The state information is communicated to the remote controller only via the relay nodes. We call this setup a \emph{two-hop} relay network, where the communication from the state encoder to the controller takes place in two hops. In the first hop the relay nodes receive the state information from the state encoder, which then communicate the state information to the controller in the second hop. The controller takes action in alternate time steps upon receiving the state information. We can obtain a sufficient condition for stability over this network by substituting $h=0, \b=1$ in Theorem \ref{thm:HalfDup}. Similarly a necessary condition can be obtained from \eqref{eq:thmNonOrthHalf_Nec}, where $\b=1$ is the maximizer of the first term and $\b=0$ is the maximizer of the second term. In the following we evaluate the gap between the sufficient and necessary conditions for a symmetric two hop network.

\begin{proposition}
For a symmetric two-hop network with $\pr^i=\pr, \nr^i=\nr, h_i=c, h=0,\b=1$, the gap between necessary and sufficient conditions approaches zero as the number of relays $L$ goes to infinity. The gap also monotonically approaches zero as $\pr$ goes to infinity.
\end{proposition}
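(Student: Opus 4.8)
The plan is to evaluate both sides of the necessity bound \eqref{eq:thmNonOrthHalf_Nec} and the sufficiency bound \eqref{eq:thmHalfDup} under the symmetric substitution $h=0$, $\b=1$, $h_i=c$, $\nr^i=\nr$, and the uniform power split $\pr^i=\pr=\pra/L$ (which is optimal by symmetry for the relay phase), and then show their difference vanishes as $L\to\infty$ and separately as $\pr\to\infty$. First I would specialize the necessity expression: with $h=0$ the first term inside the outer $\min$ becomes $\tfrac14\log(1+2L\b\ps/\nr)$ maximized at $\b=1$, i.e.\ $\tfrac14\log(1+2L\ps/\nr)$; the second term, with $h=0$ forcing $\delta_{L+1}=0$ and $\b=0$ as maximizer, reduces via the Gastpar-type bound \cite{GastparITA07} to $\tfrac14\log\!\bigl(1+\tfrac{1}{\nd}(\sum_i c^2 P_i + 2\sum_{i<k}\rho^\star c^2\sqrt{P_iP_k})\bigr)$ with $\rho^\star=\tfrac{2\ps}{2\ps+\nr}$ and $P_i=2\pr$, giving $\tfrac14\log\!\bigl(1+\tfrac{2c^2\pr}{\nd}\bigl(L+L(L-1)\tfrac{2\ps}{2\ps+\nr}\bigr)\bigr)$. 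For large $L$ the second term dominates asymptotically, so the binding necessity bound is governed by the first term $\tfrac14\log(1+2L\ps/\nr)$ once $L$ is large enough; I would track this carefully rather than assume it.

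Next I would specialize the sufficiency bound \eqref{eq:thmHalfDup}. With $h=0$, $\b=1$ the first addend $\tfrac14\log(1+2h^2\b\ps/\nd)$ vanishes, and $\nt = \sum_{i=1}^L \tfrac{2h_i^2\pr^i\nr^i}{2\b\ps+\nr^i}+\nd = \tfrac{2c^2\pr\nr}{2\ps+\nr}L+\nd$, while $\tilde M$ collapses (the first summand $\sqrt{2h^2(1-\b)\ps}$ is zero and the prefactor $\sqrt{2\b\ps\nd/(2h^2\b\ps+\nd)}=\sqrt{2\ps}$) to $\tilde M = 2\ps\bigl(\sum_{i=1}^L\sqrt{\tfrac{2c^2\pr}{2\ps+\nr}}\bigr)^2 = 2\ps\cdot\tfrac{2c^2\pr}{2\ps+\nr}L^2$. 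Hence the sufficiency rate is $\Gamma_{\text{suf}}=\tfrac14\log\!\Bigl(1+\tfrac{4\ps c^2\pr L^2/(2\ps+\nr)}{2c^2\pr\nr L/(2\ps+\nr)+\nd}\Bigr)$. The key algebraic observation is that as $L\to\infty$ the denominator of the inner fraction is dominated by its $L$-linear term, so $\Gamma_{\text{suf}}\sim\tfrac14\log\!\bigl(\tfrac{4\ps c^2\pr L^2/(2\ps+\nr)}{2c^2\pr\nr L/(2\ps+\nr)}\bigr)=\tfrac14\log\!\bigl(\tfrac{2\ps}{\nr}L\bigr)$, which matches the necessity bound $\tfrac14\log(1+2L\ps/\nr)$ to within an $o(1)$ term — this gives the first claim. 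For the $\pr\to\infty$ claim (with $L$ fixed), both $\tilde M$ and the leading part of $\nt$ scale linearly in $\pr$, so $\tilde M/\nt \to 2\ps\cdot\tfrac{c^2 L^2}{\nr L} = \tfrac{2\ps L}{\nr}$ and $\Gamma_{\text{suf}}\to\tfrac14\log(1+2\ps L/\nr)$, which is exactly the first (and here binding, since the second necessity term also grows) term of the necessity bound; thus the gap $\to 0$.

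**The main obstacle** I anticipate is not any single computation but bookkeeping: one must verify that under $h=0$ the correct maximizers of the two necessity terms really are $\b=1$ and $\b=0$ respectively (monotonicity in $\b$ of each term must be checked), and — more delicately — one must confirm that the $\min$ in \eqref{eq:thmNonOrthHalf_Nec} is, in the relevant asymptotic regime, attained by the same term that the sufficiency bound approaches, so that the comparison is against the genuinely binding constraint rather than a loose one. For the $L\to\infty$ direction this requires noting that the first necessity term grows like $\tfrac14\log L$ while the second grows like $\tfrac14\log(L^2)=\tfrac12\log L$, so the first is eventually the minimum — and it is precisely this $\tfrac14\log L$ rate that $\Gamma_{\text{suf}}$ achieves. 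Once these matchings are pinned down, the remaining steps are routine: take differences, expand the logarithms, and show the residual terms are $O(1/L)$ (resp.\ $O(1/\pr)$) and hence vanish, with monotone decrease following from the sign of the derivative of the gap expression in $L$ (resp.\ $\pr$), in the same spirit as the proof of the earlier proposition on the symmetric parallel network.
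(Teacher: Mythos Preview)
Your approach is correct and is essentially the paper's: specialize both bounds under the symmetric substitution, then show the difference vanishes. Your evaluation of $\Gamma_{\text{suf}}$ agrees exactly with the paper's. The one place you work harder than necessary is precisely what you flag as the ``main obstacle'': the paper never determines which term of the $\min$ in \eqref{eq:thmNonOrthHalf_Nec} is binding. Since the necessity condition is a $\min$, it is always at most the first argument, so the paper simply sets $\Gamma_{\text{nec}}:=\tfrac14\log\bigl(1+2L\ps/\nr\bigr)$ as an upper bound on the necessity side and computes the gap in closed form,
\[
\Gamma_{\text{nec}}-\Gamma_{\text{suf}}=\frac{1}{4}\log\!\left(1+ \frac{\dfrac{4\ps^2\nd+2\ps\nr\nd}{L}}{4c^2\ps\pr\nr+\dfrac{2c^2\pr\nr^2}{L}+\dfrac{\nd\nr\left(2\ps+\nr\right)}{L^2}}\right),
\]
from which the $L\to\infty$ limit is immediate and the monotone decrease in $\pr$ is read off directly ($\pr$ appears only in the denominator). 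Your analysis of the second necessity term and its crossover with the first is correct but superfluous, and the closed-form difference above replaces your asymptotic expansions and derivative computations with a one-line observation.
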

\begin{proof}
For $\pr^i=\pr, \nr^i=\nr, h_i=c, h=0,\b=1$ for all $i$, the R.H.S. of \eqref{eq:thmHalfDup} is evaluated as $\Gamma_{\text{suf}}:=\frac{1}{4}\log\left(1+\frac{4L^2c^2\ps\pr}{2Lc^2\pr\nr+\nd\left(2\ps+\nr\right)}\right)$ and the R.H.S of \eqref{eq:thmNonOrthHalf_Nec} can be bounded as $\Gamma_{\text{nec}}:=\frac{1}{4}\log\left(1+\frac{2L\ps}{\nr}\right)$. The gap between $\Gamma_{\text{suf}}$ and $\Gamma_{\text{nec}}$ is given by
\begin{align}
\Gamma_{\text{nec}}-\Gamma_{\text{suf}}=\frac{1}{4}\log\left(1+ \frac{\frac{4\ps^2\nd+2\ps\nr\nd}{L}}{4c^2\ps\pr\nr+\frac{2c^2\pr\nr^2}{L}+\frac{\nd\nr\left(2\ps+\nr\right)}{L^2}}\right),
\end{align}
which approaches zero as $L$ goes to infinity. The gap also monotonically approaches zero as $\pr$ tends to infinity.
\end{proof}
In Fig.~\ref{fig:nonOrthCompLPr} we have plotted $\Gamma_{\text{nec}}$ and $\Gamma_{\text{suf}}$ as functions of $L$ and $\pr$. These figures show that linear schemes are quite efficient in some regimes.

\begin{remark}
Linear policies can be even exactly optimal in the following special cases: i) If we fix all relaying policies to be linear, then the channel becomes equivalent to a point-point scalar Gaussian channel, for which linear sensing is known to be optimal for LQG control \cite{BansalBasar89}. ii) If we fix the state encoder to be linear and assume noiseless causal feedback links from the controller to the relay nodes, then linear policies are optimal for mean-square stabilization over a symmetric \emph{two-hop} relay network, by the following arguments. Since the control actions are available at the relay nodes via noiseless feedback links, there is no dual effect of control, i.e., the separation of estimation and control holds. Further by restricting the state encoder to be linear, the relay network becomes equivalent to the Gaussian network studied in\cite{GastparIT08,GastparITA07}, where it is shown that linear policies are optimal if the network is symmetric.
\end{remark}

\begin{figure}\label{fig:nonOrthCompLPr}
\centering
\subfigure[$\ps=2\pr^i=10,\nr^i=\nd=1,h_i=1$]{\includegraphics[width=7cm]{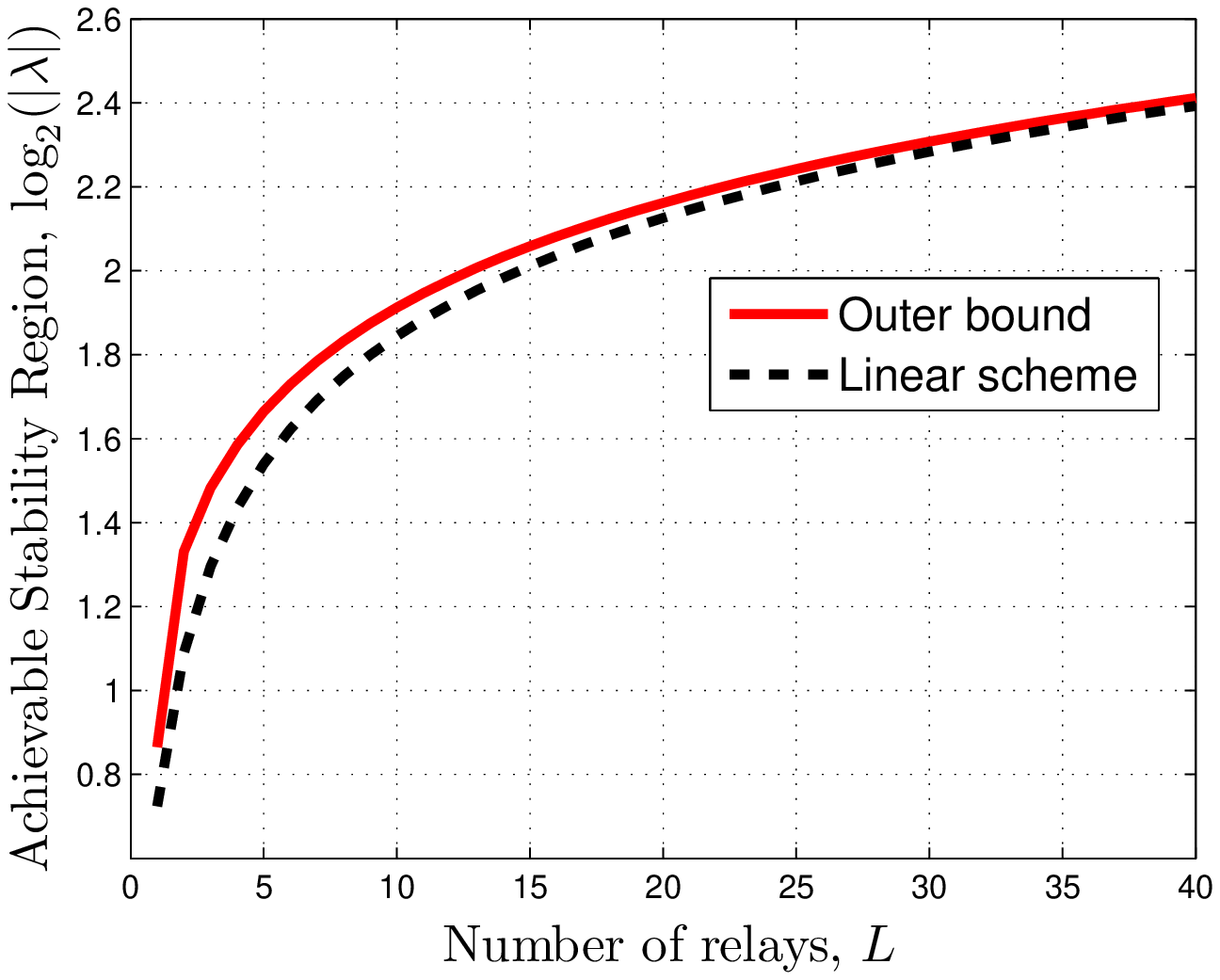}}
\subfigure[$L=10,\ps=10,\nr^i=\nd=1,h_i=1$]{\includegraphics[width=7cm]{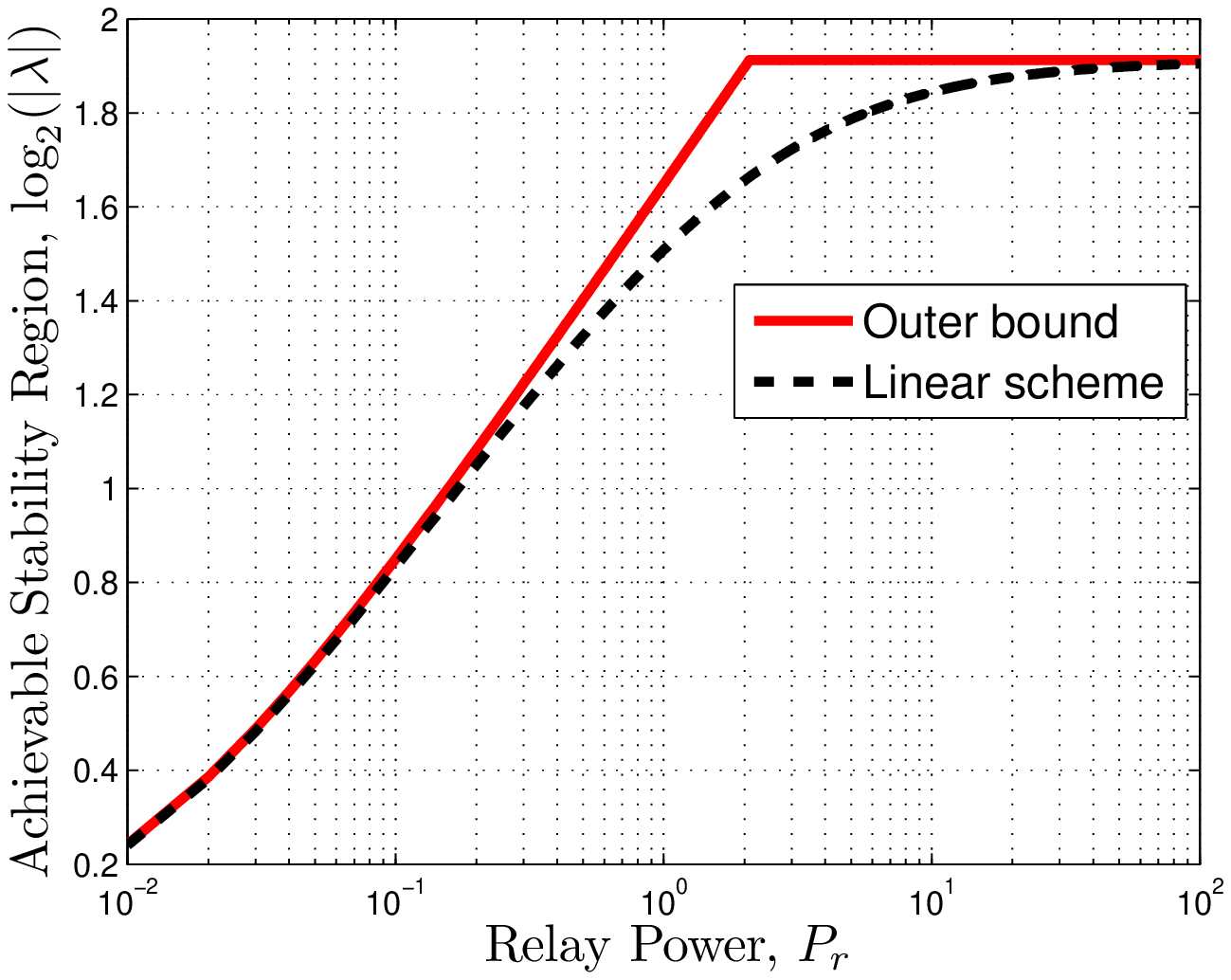}}
\caption{Comparison of necessary and sufficient conditions for a symmetric two-hop relay network.}
\label{subfig2}
\end{figure}

\subsection{Non-orthogonal Full-duplex  Network}\label{sec:FullDuplex}

We now consider a non-orthogonal network of $L$ \emph{full-duplex} relay nodes, where all the nodes receive and transmit their signals in every time step, i.e., at any time instant $t\in\mathbb{N}$,
\begin{align}
&S_{e,t}=f_t\left(X_{[0,t]},U_{[0,t-1]}\right), \qquad S^i_{r,t}=g^i_t\left(Y^i_{[0,t-1]}\right), \quad \forall t\in\mathbb{N},\nonumber \\
&Y^i_t=S_{e,t}+Z^i_{r,t}, \qquad R_t=hS_{e,t}+\sum^L_{i=1}S^i_{r,t}+Z_{d,t}, \quad \forall t\in\mathbb{N},
\end{align}
where $\ex\left[(S_{e,t})^2\right]=\ps$, $\ex\left[(S^i_{r,t})^2\right]=\pr^i$, and $\sum^L_{i=1}\pr^i\leq P_R$.
%


\begin{theorem}\label{thm:NonOrthFull_Nec}
If the linear system in \eqref{eq:stateEquation} is mean-square stable over the
\emph{non-orthogonal full-duplex} relay network, then
\begin{align}\label{eq:thmNonOrthFull_Nec}
\log\left(|\dt\left(A\right)|\right)\leq& \frac{1}{2}\min\Bigg\{\log\left(1+\ps\left(\sum^L_{i=1}\frac{1}{\nr^i}+\frac{h^2}{N_d}\right)\right), \nonumber \\
&\max_{\begin{subarray}{c} \pr^i: \sum_i \pr^i \leq\pra \end{subarray}}\left( \log\left(1\!+\!\frac{1}{\nd}\left(\!\sum^{L+1}_{i=1}\!\delta^2_iP_i\!+\!2\!\sum^{L+1}_{i=1}\sum^{L+1}_{k=i+1}\!\rho^\star_{i,k}\delta_i\delta_k\sqrt{P_i P_k}\right)\right)\right)\Bigg\},
\end{align}
where $\rho^\star_{i,k}=\frac{\ps}{\sqrt{(\ps+N_i)(\ps+N_k)}}$, $P_{L+1}:=\ps$, $N_{L+1}:=0$, $\delta_{L+1}:=h$, $P_i:=\pr^i$, $\delta_i:=h_i$, $N_i:=\nr^i$ for all $i=\{1,2,\dots,L\}$.
\end{theorem}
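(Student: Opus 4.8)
The plan is to reuse the two-cut argument from the proof of Theorem~\ref{thm:NonOrthHalf_Nec}, stripped of the two-phase bookkeeping: in the \emph{full-duplex} schedule every node transmits and receives in each time step, so there is exactly one channel use per time instant, no power-splitting parameter $\b$, and the prefactor $\frac14$ of the half-duplex bound becomes $\frac12$. By Theorem~\ref{thm:NecGeneral} it suffices to upper bound $\liminf_{T\to\infty}\frac1T I(\bar{X}_{[1,T]}\rightarrow R_{[1,T]})$ by the right-hand side of \eqref{eq:thmNonOrthFull_Nec}, and I would do this by producing two separate outer bounds on $I(\bar{X}_{[1,T]}\rightarrow R_{[1,T]})$ and then taking their minimum.

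For the first bound I would cut the network at the encoder. Define $\tilde{R}_t:=R_t-\sum^L_{i=1}h_iS^i_{r,t}$, the part of the controller's observation not produced by the relays; since $S^i_{r,t}=g^i_t(Y^i_{[0,t-1]})$ depends only on strictly past relay observations, adding $\{Y^i_{[1,T]}\}^L_{i=1}$ and $\tilde{R}_{[1,T]}$ as side information does not decrease $I(\bar{X}_{[1,T]};R_{[1,T]})$, and $R_{[1,T]}$ becomes redundant given this side information. Expanding the result as a sum of per-letter conditional mutual informations, the Markov chain $\bar{X}_{[1,T]}-S_{e,t}-(\tilde{R}_t,\{Y^i_t\}^L_{i=1})$ — valid because $\tilde{R}_t=hS_{e,t}+Z_{d,t}$ and $Y^i_t=S_{e,t}+Z^i_{r,t}$ with the channel noises independent of the plant — together with memorylessness and ``conditioning reduces entropy'' collapses each term to $I(S_{e,t};\tilde{R}_t,\{Y^i_t\}^L_{i=1})$. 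This is the mutual information of a single-input multiple-output Gaussian channel with input power $\ex[S^2_{e,t}]=\ps$, which by maximal-ratio combining and the Gaussian-optimality fact \cite[Theorem 8.6.5]{InfoBook} is at most $\frac12\log(1+\ps(\sum^L_{i=1}\tfrac{1}{\nr^i}+\tfrac{h^2}{\nd}))$; summing over $t$, dividing by $T$, and applying Theorem~\ref{thm:NecGeneral} gives the first term in \eqref{eq:thmNonOrthFull_Nec}.

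For the second bound I would cut the network at the decoder: using the Markov chains $\bar{X}_{[1,t]}-(S_{e,t},\{S^i_{r,t}\}^L_{i=1})-R_t$ and $R_{[1,t-1]}-(S_{e,t},\{S^i_{r,t}\}^L_{i=1})-R_t$ together with conditioning reduces entropy, each term $I(\bar{X}_{[1,t]};R_t\mid R_{[1,t-1]})$ is bounded by $I(S_{e,t},\{S^i_{r,t}\}^L_{i=1};R_t)=h(R_t)-h(Z_{d,t})$. Since $R_t=hS_{e,t}+\sum^L_{i=1}h_iS^i_{r,t}+Z_{d,t}$ is a noisy sum of $L+1$ power-constrained, correlated terms, I would bound $h(R_t)$ using the bound of \cite{GastparITA07} (eq.~(48)) via the same change of variables as in the proof of Theorem~\ref{thm:NonOrthHalf_Nec}, now with the encoder transmitting at its full power $\ps$ (the $\b=1$ specialization of the half-duplex identification): $P_{L+1}=\ps$, $N_{L+1}=0$, $\delta_{L+1}=h$, $P_i=\pr^i$, $\delta_i=h_i$, $N_i=\nr^i$ for $i\le L$, and correlation coefficients $\rho^\star_{i,k}=\ps/\sqrt{(\ps+N_i)(\ps+N_k)}$ reflecting that every relay observes the common signal $S_{e,t}$ through the independent noises $Z^i_{r,t}$. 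Maximizing over $\{\pr^i\}$ with $\sum_i\pr^i\le\pra$, summing over $t$, dividing by $T$, and combining with the first bound through Theorem~\ref{thm:NecGeneral} yields \eqref{eq:thmNonOrthFull_Nec}.

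The main obstacle I anticipate is the decoder cut: the relay inputs $S^i_{r,t}$ are arbitrary deterministic functions of their delayed observations and are not jointly Gaussian, so one cannot compute $h(R_t)$ directly; the argument must instead show that the worst case over all admissible (power-constrained, correlated) relay inputs is dominated by the jointly Gaussian surrogate with the stated covariance, which is precisely what the bound of \cite{GastparITA07} delivers once the identification with the fictitious sensor-network model is set up. Verifying that this identification remains legitimate with $N_{L+1}=0$ and $\delta_{L+1}=h$ — i.e.\ treating the encoder as a noiseless ``sensor'' with direct gain $h$ and power $\ps$ — is the delicate bookkeeping step; everything else is a routine specialization of the half-duplex derivation.
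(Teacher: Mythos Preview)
Your proposal is correct and matches the paper's approach exactly: the paper's proof simply states that it follows the steps of Theorem~\ref{thm:NonOrthHalf_Nec} without separating odd- and even-indexed terms (since $\ex[S_{e,t}^2]=\ps$ and $\ex[(S^i_{r,t})^2]=\pr^i$ for all $t$), which is precisely the two-cut argument you outline. One minor slip: your parenthetical ``$\b=1$ specialization'' is not the right identification---in the half-duplex second phase the encoder power is $2(1-\b)\ps$, so $\b=1$ would give zero---but the parameters $P_{L+1}=\ps$, $P_i=\pr^i$, $\rho^\star_{i,k}=\ps/\sqrt{(\ps+N_i)(\ps+N_k)}$ you actually list are the correct full-duplex values.
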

\begin{proof}
  The proof follows exactly in the steps of the proof of Theorem
  \ref{thm:NonOrthHalf_Nec}, with an exception that odd and even
  indexed terms are not treated separately because
  $\ex\left[S^2_{e,t}\right]=\ps$ and
  $\ex\left[(S^i_{r,t})^2\right]=\pr^i$ for all $t$.
\end{proof}

\begin{theorem}\label{thm:FullDup}
The scalar linear time invariant system in (\ref{eq:stateEquation}) with $A=\lm$ and $W_t=0$ can be mean square stabilized using a linear scheme over the \emph{non-orthogonal full-duplex} Gaussian network if
\begin{align}\label{eq:thmFullDup}
\log\left(|\lambda|\right)\!<\!\frac{1}{2}\max_{\pr^i:\sum^L_{i=1}\pr^i \leq \pra} \left\{\log \left(1\!+\!{\left(\sqrt{h^2\ps}\!+\!\eta^\star \sum^L_{i=1}\sqrt{\frac{h^2_i\ps\pr^i}{\ps\!+\!\nr^i}}\right)^2}\left({\nd\!+\!\sum^L_{i=1}\frac{h^2_i\pr^i \nr^i}{\ps\!+\!\nr^i}}\right)^{\!-\!1}\right)\right\},
\end{align}
where $\eta^\star$ is the unique root in the interval $[0,1]$ of the following fourth order polynomial

\begin{align}\label{eq:chan3Poly}
\left(\sum^L_{i=1}\sqrt{\frac{h^2_i\ps\pr^i}{(\ps+\nr^i)}}\right)\eta^4&+\left(2h\ps\sum^L_{i=1}\sqrt{\frac{h^2_i\pr^i}{(\ps+\nr^i)}}\right)\eta^3 \nonumber \\
&+\left(h^2\ps+\nd+\sum^L_{i=1}\frac{h^2_i\pr^i \nr^i}{\ps+\nr^i}\right)\eta^2=\left(\nd+\sum^L_{i=1}\frac{h^2_i\pr^i \nr^i}{\ps+\nr^i}\right).
\end{align}

\end{theorem}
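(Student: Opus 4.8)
The plan is to establish \eqref{eq:thmFullDup} constructively and to show that, because $W_t=0$, the closed loop reduces to transmission over a scalar Gaussian channel whose effective SNR solves \eqref{eq:chan3Poly}. The scheme is as follows: let the encoder send $S_{e,t}=\gamma_t X_t$ with $\gamma_t^2\,\ex[X_t^2]=\ps$; let relay $i$ forward $S^i_{r,t}=\beta^i Y^i_{t-1}$ (with $S^i_{r,0}=0$) at full power, $(\beta^i)^2(\ps+\nr^i)=\pr^i$, the sign of $\beta^i$ being chosen for coherent combining; and let the controller compute the linear minimum-mean-square-error estimate $\hat X_t$ of $X_t$ given $R_{[0,t]}$ and apply $U_t=-\tfrac{\lm}{B}\hat X_t$. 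Assume $\lm>0$ (otherwise flip every relay sign). With $W_t=0$ this gives $X_{t+1}=\lm(X_t-\hat X_t)=\lm\tilde X_t$, so $\ex[X_{t+1}^2]=\lm^2\,\ex[\tilde X_t^2]$ and mean-square stability amounts to keeping $\ex[X_t^2]$ bounded; the average power constraints hold (with equality for every $t\geq1$), so they are met.

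The reduction to a scalar channel hinges on the identity $\tilde X_{t-1}=X_t/\lm$, valid \emph{only} because $W_t=0$; equivalently $X_{t-1}=\hat X_{t-1}+X_t/\lm$, with $\hat X_{t-1}$ known to the controller at time $t$. Substituting this into $R_t=h\gamma_t X_t+\sum_i h_i\beta^i(\gamma_{t-1}X_{t-1}+Z^i_{r,t-1})+Z_{d,t}$ and subtracting the known term $\gamma_{t-1}\big(\sum_i h_i\beta^i\big)\hat X_{t-1}$ leaves $\tilde R_t=g_t X_t+V_t$, with $g_t=h\gamma_t+\tfrac{\gamma_{t-1}}{\lm}\sum_i h_i\beta^i$ and $\ex[V_t^2]=\nd+\sum_i (h_i\beta^i)^2\nr^i=:\nt$, in which $X_t$ is uncorrelated with $V_t$ and with $R_{[0,t-1]}$ (again using $W_t=0$ and the orthogonality of the innovation $\tilde X_{t-1}$). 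Hence $\ex[X_{t+1}^2]=\tfrac{\lm^2}{1+\mathrm{SNR}_t}\,\ex[X_t^2]$ with $\mathrm{SNR}_t=g_t^2\,\ex[X_t^2]/\nt$, which is the error recursion of a scalar Gaussian channel of signal-to-noise ratio $\mathrm{SNR}_t$, as studied for control over such channels in \cite{BansalBasar89}. For $W_t\neq0$ one gets $\lm\tilde X_{t-1}=X_t-W_{t-1}$ instead, the delayed relay copy is then no longer coherently combinable with the fresh direct copy, and the reduction breaks down; this is why the theorem assumes $W_t=0$.

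It remains to evaluate $\mathrm{SNR}_t$. Using $\gamma_t^2\ex[X_t^2]=\ps$ one finds $g_t^2\ex[X_t^2]=\big(\sqrt{h^2\ps}+\mu_t G\big)^2$, with $G:=\sum_i\sqrt{h_i^2\ps\pr^i/(\ps+\nr^i)}$ and $\mu_t:=\tfrac{1}{\lm}\sqrt{\ex[X_t^2]/\ex[X_{t-1}^2]}$; since $\mu_{t+1}^2=1/(1+\mathrm{SNR}_t)$, the scheme induces the scalar iteration $\mu_{t+1}=\Phi(\mu_t)$, $\Phi(\mu):=\big(1+\tfrac{1}{\nt}(\sqrt{h^2\ps}+\mu G)^2\big)^{-1/2}$, started at $\mu_0=0$. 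Now $\Phi$ is a continuous, strictly decreasing self-map of $[0,1]$, so it has a unique fixed point $\eta^\star\in(0,1)$, and clearing denominators in $1/(\eta^\star)^2-1=(\sqrt{h^2\ps}+\eta^\star G)^2/\nt$ gives exactly \eqref{eq:chan3Poly}, whose root in $(0,1)$ is unique since the polynomial there is monotone and changes sign. A short argument — the even and odd subsequences of $\{\mu_t\}$ are monotone because $\Phi\circ\Phi$ is increasing, and $\Phi$ has no $2$-cycle — gives $\mu_t\to\eta^\star$, whence $\ex[X_{t+1}^2]/\ex[X_t^2]=\lm^2\mu_{t+1}^2\to\lm^2(\eta^\star)^2$. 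This limit is strictly below $1$ precisely when $\log|\lm|<\tfrac{1}{2}\log\!\big(1+(\sqrt{h^2\ps}+\eta^\star G)^2/\nt\big)$, i.e.\ exactly the stated inequality; under it $\ex[X_t^2]\to0$, so the plant is mean-square stable. Taking the maximum over the relay power split $\{\pr^i\}$ with $\sum_i\pr^i\leq\pra$ produces the outer maximum in \eqref{eq:thmFullDup}.

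The crux is this last step: the effective SNR is not a free design parameter but the fixed point of a recursion coupling the instantaneous encoder renormalization $\gamma_t^2\ex[X_t^2]=\ps$ with the one-step-delayed relay forwarding, so one must both identify that fixed point (the quartic \eqref{eq:chan3Poly}) and prove the closed-loop iteration converges to it. This coupling is absent in the half-duplex case of Theorem~\ref{thm:HalfDup}, where the two-phase schedule closes each information exchange within one block; that is why the half-duplex bound features an explicit ratio instead of the root of a polynomial, and why it remains valid for $W_t\neq0$.
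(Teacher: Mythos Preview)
Your proof is correct and follows the same linear amplify-and-forward construction that the paper invokes; the paper itself defers all details to \cite{zaidiICCA10} for the single-relay case and merely asserts the multi-relay extension, whereas you supply the mechanics it omits---the innovation reduction via $\tilde X_{t-1}=X_t/\lambda$ (which is precisely where $W_t=0$ enters), the scalar-channel SNR recursion $\mu_{t+1}=\Phi(\mu_t)$, and the identification of the fixed point with the quartic \eqref{eq:chan3Poly}. One small point: your ``no $2$-cycle'' claim does hold but deserves a line of justification---equating $b^2(\tilde N+(c+aG)^2)=a^2(\tilde N+(c+bG)^2)$ and factoring yields $(b-a)\big[(a+b)(\tilde N+c^2)+2cGab\big]=0$, whose bracket is strictly positive, forcing $a=b$; with that, the monotone even/odd subsequence argument is complete.
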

\begin{proof}
The proof can be found in \cite{zaidiICCA10} for a single relay setup, which can be easily extended for multiple relays.
\end{proof}
Although we expect that Theorem \ref{thm:FullDup} also holds in the presence of process noise like other setups, we are not able to show convergence of second moment of the state process. However numerical experiments suggest that the result should hold.

\begin{remark} The term on the right hand side of the inequality in (\ref{eq:thmFullDup}) is an achievable rate with which information can be transmitted reliably over the \emph{non-orthogonal full-duplex} relay network. This result is derived for a network with single relay node in \cite[Theorem 5]{BrossWigger09}, however it can be easily extended to problems with multiple relays.
\end{remark}

\section{Noisy Multi-dimensional Systems}\label{sec:multiDimension}
In this section we investigate stabilization of multi-dimensional systems over multi-dimensional channels. First we state a result for a scalar Gaussian channel.
\begin{theorem}\label{thm:multiDim}
The $n$-dimensional noisy linear system \eqref{eq:stateEquation} can be mean square stabilized over a scalar Gaussian channel having information capacity $\mathcal{C}$, if $\log\left(\left|A\right|\right)<\mathcal{C}$. Furthermore, a linear time varying policy is sufficient through sequential linear encoding
of scalar components.
\end{theorem}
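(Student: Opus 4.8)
The plan is to establish the sufficiency direction of Theorem~\ref{thm:multiDim} by constructing an explicit linear time-varying scheme that transmits the $n$ scalar components of the decoupled plant sequentially over the scalar Gaussian channel, in a round-robin fashion with a long block length. First I would put the system in the Jordan/real canonical form so that $A=\text{diag}(A_1,\dots,A_k)$ with blocks associated to the eigenvalues $\lm_i$; since $\log|A|=\sum_i\log|\lm_i|$ and $\prod_i|\lm_i|=|\dt(A)|=|A|$, the hypothesis $\log|A|<\cc$ gives room to split the channel's capacity $\cc$ among the modes. The idea is to allocate to the $i$-th mode a fraction $\theta_i$ of the $T$ time slots in each super-frame of length $T$, with $\sum_i\theta_i\le 1$ chosen so that $\theta_i\cc>\log|\lm_i|$ (strictly) for every $i$; this is possible precisely because the total demand $\sum_i\log|\lm_i|=\log|A|$ is strictly below $\cc$. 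During the slots assigned to mode $i$, the encoder runs the point-to-point scalar stabilization scheme (for instance the Bansal--Ba\c{s}ar / Schalkwijk--Kailath type linear scheme referenced earlier, or a simple memoryless linear map followed by a Kalman-like estimator) on the subsystem $X^{(i)}_{t+1}=A_iX^{(i)}_t+B_iU^{(i)}_t+W^{(i)}_t$, treating the in-between slots (when other modes are served) as pure open-loop growth by a fixed factor.

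The key steps, in order, are: (1) decouple the plant into scalar (or small Jordan) modes by a similarity transformation and note the additivity $\log|A|=\sum_i\log|\lm_i|<\cc$; (2) fix a super-frame length $T$ and a time-slot allocation $\{\theta_i\}$ with $\theta_i\cc>\log|\lm_i|$, which exists by a strict-inequality/continuity argument since $\sum_i\log|\lm_i|<\cc$; (3) on the slots devoted to mode $i$, invoke the known scalar result (cf.\ \cite{BansalBasar89} and the achievability arguments used in Theorems~\ref{thm:cascadeNetwork_suff} and \ref{thm:HalfDup}) to conclude that the $i$-th error covariance, sampled at the end of each super-frame, satisfies a one-step recursion of the form $\Sigma^{(i)}_{m+1}\le \mu_i\,\Sigma^{(i)}_m + c_i$ with contraction factor $\mu_i=|\lm_i|^{2T}\,2^{-2T\theta_i\cc}<1$ and $c_i<\infty$ accounting for the process noise accumulated over the $T$ in-frame steps and the channel/estimation noise; (4) iterate the recursion to get $\limsup_m \Sigma^{(i)}_m<\infty$, then bound the intra-super-frame excursions of the second moment by the worst-case deterministic growth $\|A\|^{2T}$ over a single frame, giving a uniform bound $\ex[\|X_t\|^2]<M$ for all $t$; (5) undo the similarity transformation, which only rescales $M$ by the condition number of the transformation, to recover mean-square stability of the original state.

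The main obstacle I anticipate is step~(3)--(4): making rigorous the bookkeeping of the noise that accumulates in the $(1-\theta_i)T$ slots during which mode $i$ is \emph{not} being actively controlled, and verifying that the per-super-frame recursion genuinely contracts under $\theta_i\cc>\log|\lm_i|$ rather than only at equality. Concretely, one must check that the doubly-exponential-looking factor $|\lm_i|^{2T}2^{-2T\theta_i\cc}$ is what governs the recursion (so that the \emph{strict} capacity slack translates into $\mu_i<1$ for every sufficiently large $T$, or even any $T$), and that the additive constant $c_i$ stays finite — which it does because each ``rest'' interval has bounded length $T$ and $W_t$ has finite variance $K_W$. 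A secondary, more technical point is handling non-diagonalizable $A$: a Jordan block contributes a polynomial-in-$T$ factor to the open-loop growth, but since the contraction comes from an \emph{exponential} gap $2^{2T(\theta_i\cc-\log|\lm_i|)}$, the polynomial factor is absorbed for $T$ large, so the argument still closes. Finally, I would remark that the converse inequality $\log|A|\le\cc$ is already contained in Theorem~\ref{thm:NecGeneral} specialized to the point-to-point channel (directed information rate bounded by capacity), so the theorem's ``if'' is in fact tight.
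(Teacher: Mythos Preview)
Your proposal is correct and follows essentially the same strategy as the paper: both put $A$ in Jordan form, time-share the scalar channel among the modes with allocation $\theta_i$ satisfying $\theta_i\mathcal{C}>\log|\lm_i|$ (the paper takes the specific choice $\theta_i=\log|\lm_i|/\sum_j\log|\lm_j|$), and then invoke the scalar point-to-point result on each mode. The only notable difference is in the bookkeeping for Jordan coupling: the paper works out a two-dimensional example explicitly, bounds the cross term via Cauchy--Schwarz, and closes the recursion through a monotone fixed-point lemma (Lemma~\ref{lm:vectorConverge}), whereas you argue more abstractly that the polynomial growth from a Jordan block is absorbed by the exponential contraction $2^{2T(\theta_i\mathcal{C}-\log|\lm_i|)}$; both routes work.
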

\emph{Proof Outline:} We prove Theorem \ref{thm:multiDim} with the help of a simple example, due to space limitation in the paper.
Consider that a two-dimensional plant with system matrix $A=\left[
  \begin{array}{cc}
    \lm_1 & 1 \\
    0 & \lm_2 \\
  \end{array}
\right]$
and an invertible input matrix $B$ has to be stabilized over a Gaussian channel disturbed by a zero mean Gaussian noise with variance $N$.
We assume that the sensor transmits with an average $P$. For this channel, we define information
capacity as $\mathcal{C}:=\frac{1}{2}\log\left(1+\frac{P}{N}\right)$.
We denote the state and the control variables as
$X_t:=[x_{1,t},x_{2,t}]^T$ and $U_t:=[u_{1,t},u_{2,t}]^T$
respectively. Consider the following scheme for stabilization. The
sensor observes state vector $X_t$ in alternate time steps (that is,
at $t, t+2, t+4, \dots$), whose elements are sequentially
transmitted. The sensor linearly transmits $x_{2,t}$ at time $t$ and
$x_{1,t}$ at time $t+1$ with an average transmit power constraint. The
control actions for the two modes are also taken in alternate time
steps, that is, $u_{1,t}=0$ and $u_{2,t+1}=0$. Accordingly the state
equations for the two modes at time $t+1$ are given by
\begin{align}
x_{2,t+1}&=\lm_2 x_{2,t}+ u_{2,t}+w_{2,t}\stackrel{(a)}{=}\lm_2 \left(x_{2,t}-\hat{x}_{2,t}\right)+w_{2,t}, \label{eq:vectorMode2t1}\\
x_{1,t+1}&\stackrel{(b)}{=}\lm_1 x_{1,t}+ x_{2,t} +w_{1,t}, \label{eq:vectorMode1t1}
\end{align}
where $(a)$ and $(b)$ follow from $u_{2,t}=-\lm_2 \hat{x}_{2,t}$ and $u_{1,t}=0$. The state equations at time $t+2$ are
\begin{align}
x_{2,t+2}&=\lm_2 x_{2,t+1}+w_{2,t+1}=\lm^2_2 \left(x_{2,t}-\hat{x}_{2,t}\right)+\lm_2 w_{2,t}+ w_{2,t+1}, \label{eq:vectorMode2t2}\\
x_{1,t+2}&=\lm_1 x_{1,t+1}\!+\! x_{2,t+1} \!+\! u_{1,t+1} \!+\! w_{1,t+1}\stackrel{(a)}{=}\lm^2_1 x_{1,t}\!+\! (\lm_1\!+\!\lm_2) x_{2,t} \!+\! u_{1,t+1} \!+\! \lm_1 w_{1,t} \!+\! w_{2,t} \!+\! w_{1,t+1}  \nonumber \\
&\stackrel{(b)}{=}\lm^2_1\left(x_{1,t}- \hat{x}_{1,t}\right)+ (\lm_1+\lm_2)\left(x_{2,t}-\hat{x}_{2,t}\right) + \lm_1 w_{1,t} +w_{2,t}+ w_{1,t+1}, \label{eq:vectorMode1t2}
\end{align}
where $(a)$ follows \eqref{eq:vectorMode1t1}; and $(b)$ follows from $u_{1,t+1}=\!-\!\lm^2_1 \hat{x}_{1,t}\! -\! (\lm_1 \!+\!\lm_2) \hat{x}_{2,t}$. We first study the stabilization of the lower mode. According to \eqref{eq:vectorMode2t2} the second moment of $x_{2,t}$ is given by
\begin{align}
\ex\left[x^2_{2,t+2}\right]=\lambda^4_2 \ex\left[\left(x_{2,t}- \hat{x}_{2,t}\right)^2\right]+ \tilde{n}_2= \lambda^4_2 2^{-2\mathcal{C}} \ex\left[x^2_{2,t}\right]+\tilde{n}_2.
\end{align}
where the last equality follows from the linear mean-square estimation of a Gaussian variable over a scalar Gaussian channel of capacity $\mathcal{C}$ and $\tilde{n}_2:=(\lm^2_2+1) n_{w,2}$. We observe that the lower mode is stable if and only if $\lambda^4_2 2^{-2\mathcal{C}}<1 \Rightarrow \log(|\lm_2|) < \frac{\mathcal{C}}{2}$.
Assuming that $x_{2,t}$ is stable, the second moment of $x_{1,t}$ is given by
\begin{align}
&\ex\left[x^2_{1,t+2}\right]\stackrel{(a)}{=}\lambda^4_1 \ex\left[\left(x_{1,t}- \hat{x}_{1,t}\right)^2\right]+ 2\lambda^2_1 (\lm_1+\lm_2) \ex\left[\left(x_{1,t}- \hat{x}_{1,t}\right)\left(x_{2,t}-\hat{x}_{2,t}\right)\right] \nonumber \\
&\quad +(\lm_1+\lm_2)^2 \ex\left[\left(x_{2,t}-\hat{x}_{2,t}\right)^2\right] + \tilde{n}_1 \nonumber \\
&\stackrel{(b)}{=}\lambda^4_1 2^{-2\mathcal{C}} \ex\left[x^2_{1,t}\right]+ 2\lambda^2_1 (\lm_1+\lm_2)  \ex\left[\left(x_{1,t}- \hat{x}_{1,t}\right)\left(x_{2,t}-\hat{x}_{2,t}\right)\right]+(\lm_1+\lm_2)^2 2^{-2\mathcal{C}} \ex\left[x^2_{2,t}\right] + \tilde{n}_1 \nonumber \\
&\stackrel{(c)}{\leq}\lambda^4_1 2^{-2\mathcal{C}} \ex\left[x^2_{1,t}\right]\!+\! 2\lambda^2_1 (\lm_1\!+\!\lm_2) \sqrt{\ex\left[\left(x_{1,t}\!-\! \hat{x}_{1,t}\right)^2\right] \ex\left[\left(x_{2,t}\!-\!\hat{x}_{2,t}\right)^2\right]}\!+\!(\lm_1\!+\!\lm_2)^2 2^{-2\mathcal{C}} \ex\left[x^2_{2,t}\right] \!+\! \tilde{n}_1 \nonumber \\
&=\lambda^4_1 2^{-2\mathcal{C}} \ex\left[x^2_{1,t}\right]+ 2\lambda^2_1 (\lm_1+\lm_2) \sqrt{2^{-2\mathcal{C}} \ex\left[x^2_{1,t}\right]}\sqrt{2^{-2C}\ex\left[x^2_{2,t}\right]}+(\lm_1+\lm_2)^2 2^{-2\mathcal{C}} \ex\left[x^2_{2,t}\right] + \tilde{n}_1 \nonumber \\
&\stackrel{(d)}{\leq} k_1 \ex\left[x^2_{1,t}\right]+ k_2\sqrt{\ex\left[x^2_{1,t}\right]}+k_3,
\end{align}
where $(a)$ follows from \eqref{eq:vectorMode1t2} and
$\tilde{n}_1:=(\lm^2_1+1) n_{w,1}+n_{w,2}$; $(b)$ follows from the
linear mean-square estimation of a Gaussian variable over a scalar
Gaussian channel of capacity $\mathcal{C}$; $(c)$ follows from the
Cauchy--Schwarz inequality; $(d)$ follows from the fact
$\ex\left[x^2_{1,t}\right]<M$ (assuming that $\lambda^4_2
2^{-2\mathcal{C}}<1$) and by defining $k_1:=\lambda^4_1 2^{-2\mathcal{C}}$,
$k_2:=2\lambda^2_1 (\lm_1+\lm_2) 2^{-2\mathcal{C}} \sqrt{M}$, and
$k_3:=(\lm_1+\lm_2)^2 2^{-2\mathcal{C}} M +\tilde{n}_1$. We now want to a find
condition which ensures convergence of the following sequence:
\begin{align}
\a_{t+1}=k_1 \a_{t}+ k_2\sqrt{\a_{t}}+k_3. \label{eq:vectorSeq1}
\end{align}
In order to show convergence, we make use of the following lemma.
\begin{lemma}\label{lm:vectorConverge}
Let $T:\mathbb{R}\mapsto\mathbb{R}$ be a non-decreasing continuous mapping with a unique fixed point $x^\star\in\mathbb{R}$. If there exists $u\leq x^\star \leq v$ such that $T(u)\geq u$ and $T(v) \leq v$, then the sequence generated by $x_{t+1}=T(x_t)$, $t\in\mathbb{N}$ converges starting from any initial value $x_0\in \mathbb{R}$.
\end{lemma}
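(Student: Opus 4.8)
The plan is to exploit the two structural gifts in the hypotheses---monotonicity of $T$ and uniqueness of the fixed point---so that convergence from an \emph{arbitrary} $x_0\in\mathbb{R}$ reduces to the monotone-bounded-sequence theorem, with the bracketing points $u,v$ used only to pin down a global sign pattern. First I would observe that because $T$ is non-decreasing, every orbit is itself monotone: if $x_1=T(x_0)\geq x_0$ then applying $T$ repeatedly gives $x_{t+1}\geq x_t$ for all $t$ by induction, and symmetrically $x_1\leq x_0$ forces a non-increasing orbit. Hence the only thing that can fail is boundedness, and to control that I would study the residual $g(x):=T(x)-x$, which is continuous.

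The crux is determining the sign of $g$ on each side of $x^\star$. Since $g$ is continuous, vanishes at $x^\star$, and (by uniqueness of the fixed point) vanishes nowhere else, the intermediate value theorem forces $g$ to keep a constant sign on each of the connected sets $(-\infty,x^\star)$ and $(x^\star,\infty)$. The bracketing hypotheses then fix those signs: $T(u)\geq u$ with $u<x^\star$ gives $g(u)>0$, hence $g>0$ on all of $(-\infty,x^\star)$, and likewise $T(v)\leq v$ with $v>x^\star$ gives $g<0$ on all of $(x^\star,\infty)$. This is precisely the step that reaches initial conditions outside $[u,v]$: once the sign of $g$ is known on an entire half-line, the particular points $u,v$ are no longer needed, so the argument is not confined to the invariant interval that a cruder approach would impose.

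With the sign pattern in hand the trapping argument finishes the proof. For $x_0>x^\star$ I would use $g(x_0)<0$ to obtain $x_1<x_0$ and the monotonicity of $T$ to obtain $x_1=T(x_0)\geq T(x^\star)=x^\star$; monotonicity of the orbit then makes it non-increasing, while an easy induction ($x_t\geq x^\star\Rightarrow x_{t+1}=T(x_t)\geq x^\star$) bounds it below by $x^\star$, so it converges. The case $x_0<x^\star$ is symmetric (non-decreasing, bounded above by $x^\star$), and $x_0=x^\star$ is immediate. Continuity of $T$ shows the limit $L$ satisfies $T(L)=L$, and uniqueness of the fixed point forces $L=x^\star$.

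The main obstacle I anticipate is the boundary of the sign argument: the degenerate possibility that the only admissible bracketing points satisfy $u=x^\star$ or $v=x^\star$, in which case the corresponding hypothesis is vacuous, the sign of $g$ on that half-line is undetermined, and a purely order-theoretic argument cannot exclude an escaping orbit. I would therefore record that the conclusion rests on genuine two-sided bracketing $u<x^\star<v$, which holds automatically for the map $T(x)=k_1x+k_2\sqrt{x}+k_3$ of Theorem \ref{thm:multiDim}: there $k_1<1$ makes $g(x)\to-\infty$ as $x\to\infty$ while $g(0)=k_3>0$, so strict bracketing points exist and $g$ has a single sign change. Making this strictness explicit, rather than silently assuming it, is where the write-up will need the most care.
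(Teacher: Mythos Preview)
Your proposal is correct and follows essentially the same route as the paper's proof: case-split on the side of $x^\star$, use the sign of $T(x)-x$ on each half-line to show the orbit is monotone and bounded by $x^\star$, and invoke the monotone convergence theorem. If anything your version is slightly more careful---the paper simply asserts ``for $x_0\in[x^\star,\infty)$ we have $T(x)\leq x$'' without explicitly invoking continuity and the intermediate value theorem, whereas you justify this sign pattern via $g(x)=T(x)-x$ and also flag the degenerate bracketing case $u=x^\star$ or $v=x^\star$ that the paper passes over.
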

\begin{proof}
The proof is given in Appendix \ref{apx:MultiDim}.
\end{proof}
We observe that the mapping $T(\a)=k_1 \a+ k_2\sqrt{\a}+k_3$ with $\a\geq0$ is monotonically increasing since $k_1,k_2>0$. It will have a unique fixed point $\a^\star$ if and only if $k_1<1$, since $k_2,k_3>0$. Assuming that $k_1<1$, there exists $u<\a^\star<v$ such that $T(u)\geq u$ and $T(v) \leq v$. Therefore by Lemma \ref{lm:vectorConverge} the sequence $\{\a_t\}$ is convergent if $k_1=\lambda^4_1 2^{-2\mathcal{C}}<1 \Rightarrow \log(|\lm_1|) < \frac{\mathcal{C}}{2}$.

The time sharing scheme illustrated above can be generalized to any
$n$-dimensional plant and the stability conditions can be easily
obtained using Lemma \ref{lm:vectorConverge}. We know that any system
matrix $A$ can be written in the Jordan form by a similarity matrix
transformation. We can then use the following scheme for
stabilization. The encoder chooses to send only one component of the
observed state vector at each time $t$ over a Gaussian channel of
capacity $\mathcal{C}$. Assume that for a fraction
$\frac{\log(|\lm_m|)}{\sum^K_{i=1}\log(|\lm_i|)}$ of the total
available time the encoder transmits the $m$-th component of the state
vector. Thus the rate available for the transmission of the $m$-th
state component is
$\frac{\log(|\lm_m|)}{\sum^K_{i=1}\log(|\lm_i|)}\mathcal{C}$. The
system will be stable if and only if
$\log(|\lm_m|)<\frac{\log(|\lm_m|)}{\sum^K_{i=1}\log(|\lm_i|)}\mathcal{C}$
for all $m\in \{1,2,\dots,K\}$, which implies
$\sum^K_{i=1}\log(|\lm_i|)=\log\left(\left|\dt\left(A\right)\right|\right)<
\mathcal{C}$. For a multi-dimensional system with a controllable $(A,B)$ pair, any input (control action) can be realized in $n$ time steps. If the encoder has access to the channel output, then it can refine estimate of the state using noiseless feedback channel (SK coding scheme) during these $n$ time steps and observe the new state periodically after every $n$ time steps. \hfill $\square$
\begin{remark}The sufficiency results
presented in sections \ref{sec:NonOrthogonal}-\ref{sec:Parallel} for
scalar systems can be extended to multi-dimensional systems using the proposed time varying scheme. The
sufficient conditions for vector systems will be identical to scalar
systems except that $\log(|\lm|)$ is replaced with
$\log(\left|\dt\left(A\right)\right|)$ everywhere.
\end{remark}
\begin{remark}
  In \cite{BraslavskyFreudenberg07} the authors studied stabilization
  of a noiseless multi-dimensional system over a point-to-point scalar
  Gaussian channel using a linear time invariant scheme, that is the
  state encoder transmits $S_t=E X_t$, where $E$ is a row vector. This
  LTI scheme cannot stabilize if the pair $(A,E)$ is not
  observable. For example consider a diagonal system matrix $A$ with
  two equal eigenvalues. This system cannot be stabilized by any
  choice of the encoding matrix $E$, irrespective of how much power
  the state encoder is allowed to spend. However our linear time varying scheme can always stabilize
  the system, even in the presence of process noise.
\end{remark}

\begin{remark}
As mentioned in Theorem \ref{thm:parallelNoiseless} and Remark \ref{rem:parallelRelay}, the proposed time varying scheme can be used with the non-linear scheme of \cite{KumarLaneman11} to achieve the minimum power required for stabilization of noiseless multi-dimensional plants over vector Gaussian channels.
\end{remark}

\vspace{-.3cm}
\section{Conclusions}
The problem of mean-square stabilization of LTI plants over basic Gaussian relay networks is analyzed. Some necessary and sufficient conditions for stabilization are presented which reveal relationships between stabilizability and communication parameters. These results can serve as a useful guideline for a system designer. Necessary conditions have been derived using information theoretic cut-set bounds, which are not tight in general due to the real-time nature of the information transmission. Sufficient conditions for stabilization of scalar plants are obtained by employing time invariant communication and control schemes. We have shown that time invariant schemes are not sufficient in general for stabilization of multi-dimensional plants. However, a simple time variant scheme is always shown to stabilize multi-dimensional plants. In this time varying scheme, one component of the state vector is transmitted at a time and the state component corresponding to a more unstable mode is transmitted more often. The sufficient conditions for stabilization of multi-dimensional plants are obtained by using this time varying scheme. We also established minimum signal-to-noise ratio requirement for stabilization of a noiseless multi-dimensional plant over a parallel Gaussian channel. It is observed in some network settings that sufficient conditions do not depend on the plant noise and they may be characterized by the directed information rate from the sequence of channel inputs to the sequence of channel outputs. We have discussed optimality of linear policies over the given network topologies. In some very special cases, linear schemes are shown to be optimal.

\vspace{-.3cm}
\appendix
\subsection{Necessary Condition}\label{apx:NecGeneral}
Consider the following series of inequalities:
\begin{align}\label{eq:directedInfoBound1}
&I\left(X_{[0,T\sm1]}\rightarrow R_{[0,t\sm 1]}\right)\stackrel{(a)}{=}\sum^{T-1}_{t=0}I\left(X_{[0,t]};R_t|R_{[0,t\sm 1]}\right)\stackrel{(b)}{\geq}\sum^{T-1}_{t=0}I\left(X_t;R_t|R_{[0,t\sm 1]}\right)\nonumber \\
&\stackrel{}{=}I\left(X_0;R_0\right)\!+\!\sum^{T-1}_{t=1}I\left(X_t;R_t|R_{[0,t\sm 1]}\right)\!=\! I\left(X_0;R_0\right)+ \sum^{T-1}_{t=1}\left(h\left(X_t|R_{[0,t\sm 1]}\right)\!-\!h\left(X_t|R_{[0,t]}\right)\right)\nonumber \\
&\stackrel{(c)}{=}I\left(X_0;R_0\right)+\sum^{T-1}_{t=1}\left(h\left(AX_{t-1}+BU_{t-1}+W_{t-1}|R_{[0,t\sm 1]}\right)-h\left(X_t|R_{[0,t]}\right)\right) \nonumber \\
&\stackrel{(d)}{=}I\left(X_0;R_0\right)+\sum^{T-1}_{t=1}\left(h\left(AX_{t-1}+W_{t-1}|R_{[0,t\sm 1]}\right)-h\left(X_t|R_{[0,t]}\right)\right) \nonumber \\
&\!\stackrel{(e)}{\geq}\!I\left(X_0;R_0\right)+\sum^{T-1}_{t=1}\left(h\left(AX_{t-1}+W_{t-1}|R_{[0,t\sm 1]},W_{t-1}\right)-h\left(X_t|R_{[0,t]}\right)\right)\stackrel{(f)}{=}I\left(X_0;R_0\right)+ \nonumber \\
&+\sum^{T-1}_{t=1}\left(h\left(AX_{t-1}|R_{[0,t\sm 1]}\right)-h\left(X_t|R_{[0,t]}\right)\right)\stackrel{(g)}{=}\sum^{T-1}_{t=1}\left(\log\left(\left|\dt\left(A\right)\right|\right)+h\left(X_{t-1}|R_{[0,t\sm 1]}\right)-h\left(X_t|R_{[0,t]}\right)\right) \nonumber\\
&+I\left(X_0;R_0\right)\stackrel{}{=}I\left(X_0;R_0\right)+T \log\left(\left|\dt\left(A\right)\right|\right)+h\left(X_0|R_0\right)-h\left(X_{T\sm1}|R_{[0,T\sm 1]}\right)\nonumber \\
&\stackrel{}{=}h\left(X_0\right)+ T \log\left(\left|\dt\left(A\right)\right|\right)-h\left(X_{T\sm1}|R_{[0,T\sm 1]}\right) \stackrel{(h)}{\geq} h\left(X_0\right)+ T \log\left(\left|\dt\left(A\right)\right|\right)-h\left(X_{T\sm1}\right)\nonumber\\
& \stackrel{(i)}{\geq} h\left(X_0\right)+ T \log\left(\left|\dt\left(A\right)\right|\right)-\log\left((2\pi e)^n\left|\dt\left(K\right)\right|\right),
\end{align}
where $(a)$ follows from the definition of directed information; $(b)$ follows from the fact that discarding variables cannot increase mutual information; $(c)$ follows from \eqref{eq:stateEquation}; $(d)$ follows from $U_{t- 1}=\pi_{t-1}(R_{[0,t\sm 1]})$; $(e)$ follows from the fact that conditioning reduces entropy; $(f)$ follows from $h\left(AX_{t\sm1}+W_{t\sm1}|R_{[0,t\sm 1]},W_{t-1}\right)=h(AX_{t-1}|R_{[0,t\sm 1]},W_{t-1})=h(AX_{t-1}|R_{[0,t\sm 1]})$ due to mutual independence of $X_t$ and $W_t$; $(g)$ follows from $h\left(AX\right)=\left|\dt\left(A\right)\right| h(X)$ \cite[Theorem 8.6.4]{InfoBook}; $(h)$ follows from conditioning reduces entropy; and $(i)$ follows the fact that for a mean square stable system there exists a matrix $K\succ0$ with $\dt\left(\ex\left[X^T_tX_t\right]\right) < \dt\left(K\right)$ for all $t$ and further for a given covariance matrix the differential entropy is maximized by the Gaussian distribution.
We can also write
\begin{align}\label{eq:noControlDirect}
&I\left(X_{[0,T\sm1]}\rightarrow R_{[0,t\sm 1]}\right)\stackrel{}{=}\sum^{T-1}_{t=0}I\left(X_{[0,t]};R_t|R_{[0,t\sm 1]}\right)\stackrel{(a)}{=}\sum^{T-1}_{t=0}I\left(\bar{X}_{[0,t]}+\bar{f}\left(U_{[0,t\sm1]}\right);R_t|R_{[0,t\sm 1]}\right) \nonumber \\
&\stackrel{(b)}{=}\sum^{T-1}_{t=0}I\left(\bar{X}_{[0,t]};R_t|R_{[0,t\sm 1]}\right)=I\left(\bar{X}_{[0,T\sm1]}\rightarrow R_{[0,T\sm 1]}\right),
\end{align}
where $(a)$ follows by defining uncontrolled state process $\bar{X}_{t+1}=A\bar{X}_{t}+W_t$ and writing the controlled state process $X_{[0,t]}$ as a sum of uncontrolled process $\bar{X}_{[0,t]}$ and a linear function of control actions $U_{[0,t\sm1]}$, since the system is linear and control actions are additive; and $(b)$ follows from $U_{t}=\pi_{t}(R_{[0,t]})$. From \eqref{eq:directedInfoBound1} and \eqref{eq:noControlDirect} we have  ${\liminf}_{\begin{subarray}{c}T\rightarrow\infty\end{subarray}} \frac{1}{T} I\left(\bar{X}_{[0,T\sm1]}\rightarrow R_{[0,T\sm 1]}\right){\geq}\log\left(\left|\dt\left(A\right)\right|\right)$, since we have assumed $h(X_0)<\infty$.

\subsection{Proof of Theorem \ref{thm:HalfDup}}\label{sec:ProofHalfDuplex}
In order to prove Theorem \ref{thm:HalfDup} we propose a linear
communication and control scheme. This scheme is based on the coding
scheme given in \cite{BrossWigger09} which is an adaptation of the
well-known Schalkwijk--Kailath scheme \cite{SchalkwijkKailath66}. By
employing the proposed linear scheme, we find a condition on the
system parameters $\lm$ which is sufficient to mean square stabilize
the system (\ref{eq:stateEquation}). The control and communication
scheme for the \emph{half-duplex} network works as follows: If the
initial state $X_0$ is not Gaussian distributed, then we first make
the state process Gaussian distributed by performing the following
initialization step which was introduced in \cite{zaidiReglermote}.
\subsubsection*{Initial time step, $t=0$}
At time step $t=0$, the state encoder $\e$ observes $X_{0}$ and it transmits $S_{e,0}=\sqrt{\frac{\ps}{\alpha_{0}}}X_{0}$. The decoder $\d$ receives $R_0=hS_{e,0}+Z_{d,0}$. It estimates $X_{0}$ as $ \hat{X}_{0}=\frac{1}{h}\sqrt{\frac{\alpha_{0}}{\ps}}R_0=X_{0}+\frac{1}{h}\sqrt{\frac{\alpha_{0}}{\ps}}Z_{d,0}$. The controller $\cc$ then takes an action $U_{0}=-\lm\hat{X}_{0}$ which results in
\begin{align}
X_{1}=\lm X_0 + U_0 + W_0=\lm\left(X_{0}-\hat{X}_{0}\right)+W_0=-\frac{\lm}{h}\sqrt{\frac{\alpha_{0}}{\ps}}Z_{d,0}+W_0.
\end{align}
The new plant state $X_{1}\sim\mathcal{N}(0,\a_{1})$ where $\a_{1}=\frac{\lm^2 \nd}{h^2\ps}\a_{0}+n_w$.
\subsubsection*{First transmission phase, $t=1,3,5,...$}
The state encoder $\e$ observes $X_t$ and transmits $S_{e,t}=\sqrt{\frac{2\b\ps}{\a_t}}X_t$. The relay nodes $\left\{\r_i\right\}^L_{i=1}$ receive this signal over the Gaussian links and do not transmit any signal in this transmission phase due to half-duplex restriction. The decoder $\d$ observes $R_t=hS_{e,t}+Z_{d,t}$ and computes the MMSE estimate of $X_t$, which is given by
\begin{align*}
  \hat{X}_{t}=\ex[X_t|R_{[1,t]}] \stackrel{(a)}{=}\ex[X_t|R_t]\stackrel{(b)}{=}\frac{\ex[X_tR_t]}{\ex[R^2_t]} R_t\stackrel{(c)}{=}\left(\frac{h\sqrt{2\b\ps\a_t}}{2h^2\b\ps+\nd}\right)R_t, \nonumber
\end{align*}
where ($a$) follows from the \emph{orthogonality principle} of MMSE estimation (that is $\mathbb{E}[X_t R_{t-j}]=0$ for $j\geq1$) \cite{HayesBook}; ($b$) follows from the fact that the optimum MMSE estimator for a Gaussian variable is linear \cite{HayesBook}; and ($c$) follows from $\ex[X_tR_t]=\sqrt{2h^2\b\ps\a_t}$ and $\ex[R^2_t]=2h^2\b\ps+\nd$.\newline
The controller $\cc$ takes an action $U_{t}=-\lm\hat{X}_{t}$ which results in $X_{t+1}=\lm(X_{t}-\hat{X}_{t})+W_t$. The new plant state $X_{t+1}$ is a linear combination of zero mean Gaussian variables $\{X_t,\hat{X}_t,W_t\}$, therefore it is also zero mean Gaussian with the following variance
\begin{align}
\a_{t+1}:=\ex[X^2_{t+1}]&=\lm^2\ex[(X_{t}-\hat{X}_{t})^2]+\ex[W^2_t]=\lm^2\left(\frac{\nd}{2h^2\b\ps+\nd}\right)\a_t+n_w, \label{eq:noisyP_varFirstPhase}
\end{align}
where the last equality follows from $\ex[X_t\hat{X}_t]=\ex[\hat{X}^2_t]=\frac{2h^2\b\ps\a_t}{2h^2\b\ps+\nd}$ (by computation).
\subsubsection*{Second transmission phase, $t=2,4,6,...$}
The encoder $\e$ observes $X_t$ and transmits $S_{e,t}=\sqrt{\frac{2\left(1-\b\right)\ps}{\a_t}}X_t$. In this phase the relay nodes choose to transmit their own signal to the decoder $\d$ and thus they cannot listen to the signal transmitted from the state encoder due to the half-duplex assumption. Each relay node amplifies the signal that it had received in the previous time step (first transmission phase) under an average transmit power constraint and transmits it to the decoder $\d$. The signal transmitted from the $i$-th relay node is thus given by
$S^i_{r,t}=\sqrt{\frac{2\pr^i}{\left(2\b\ps+\nr^i \right)}}\left(S_{e,t-1}+Z^i_{r,t-1}\right)$.
The decoder $\mathcal{D}$ accordingly receives
\begin{align}
 R_t&=hS_{e,t}+\sum^L_{i=1}h_iS^i_{r,t}+Z_{d,t}=L_1X_t+L_2X_{t-1}+\tilde{Z}_t,
\end{align}
where $L_1=\sqrt{\frac{2(1-\b)h^2\ps}{\a_{t}}}$, $L_2=\sum^L_{i=1}\sqrt{\frac{4\b h^2_i\ps\pr^i}{(2\b\ps+\nr^i)\a_{t-1}}}$, and  $\tilde{Z}_t=Z_{d,t}+\sum^L_{i=1}\sqrt{\frac{2h^2_i\pr^i}{2\b\ps+\nr^i}}Z^i_{r,t-1}$ is a white Gaussian noise sequence with zero mean and variance $\nt(\b,\{\pr^i\}^L_{i=1})=\nd+\sum^L_{i=1}\frac{2h^2_i\pr^i\nr^i}{2\b\ps+\nr^i}$. The decoder then computes the MMSE estimate of $X_t$ given all previous channel outputs $\{R_1,R_2,...,R_t\}$ in the following three steps:
\begin{enumerate}
  \item Compute the MMSE prediction of $R_t$ from $\{R_1,R_2,...,R_{t-1}\}$, which is given by $\hat{R}_t=L_2\hat{X}_{t-1}$,
  where $\hat{X}_{t-1}$ is the MMSE estimate of $X_{t-1}$.
  \item Compute the innovation
\begin{align}\label{eq:innovation}
  I_t&=R_t-\hat{R}_t=L_1X_t+L_2(X_{t-1}\!-\!\hat{X}_{t-1})\!+\!\tilde{Z}_t\stackrel{(a)}{=}\left(\frac{\lm L_1+L_2}{\lm}\right)X_t\!-\!\frac{L_2}{\lm}W_{t-1}\!+\!\tilde{Z}_t,
 \end{align}
where $(a)$ follows from $X_t=\lm\left(X_{t-1}-\hat{X}_{t-1}\right)+W_{t-1}$.
  \item Compute the MMSE estimate of $X_{t}$ given $\{R_1,R_2,...,R_{t-1},I_t\}$.
The state $X_t$ is independent of $\{R_1,R_2,...,R_{t-1}\}$ given $I_t$, therefore we can compute the estimate $\hat{X}_{t}$ based only on $I_t$ without any loss of optimality, that is,
\begin{align}
\hat{X}_{t}&=\mathbb{E}[X_{t}|I_t]\stackrel{(a)}{=}\frac{\mathbb{E}[X_tI_t]}{\mathbb{E}[I^2_t]}I_t\stackrel{(b)}{=}\frac{\lm\left(\lm L_1+L_2\right)\a_t}{\left(\lm L_1+L_2\right)^2\a_t+L^2_2n_w+\lm^2 \nt(\b,\pr)} I_t,
\end{align}
where ($a$) follows from an MMSE estimation of a Gaussian variable; and ($b$) follows from $\ex[X_tI_t]=\left(\frac{\lm L_1+L_2}{\lm}\right)\a_t$ and $\ex[I^2_t]=\left(\frac{\lm L_1+ L_2}{\lm}\right)^2\a_t+\frac{L^2_2 n_w}{\lm^2}+\nt(\b,\pr)$.
\end{enumerate}
The controller $\cc$ takes action $U_{t}=-\lm\hat{X}_{t}$ which results in $X_{t+1}=\lm(X_{t}-\hat{X}_{t})+W_t$. The new plant state $X_{t+1}$ is a linear combination of zero mean Gaussian random variables $\{X_t,\hat{X}_t,W_t\}$, therefore it is also zero mean Gaussian distributed with the following variance,
\begin{align}
&\a_{t+1}\!=\!\lm^2\ex[(X_{t}\!-\!\hat{X}_{t})^2]\!+\!\ex[W^2_t]\stackrel{(a)}{=}\lm^2\a_t\left(\frac{L^2_2n_w\!+\!\lm^2\nt(\b,\pr)}{\left(\lm L_1\!+\!L_2\right)^2\a_t\!+\!L^2_2n_w\!+\!\lm^2 \nt(\b,\pr) }\right)\!+\!n_w \label{eq:varBet}
\end{align}
\begin{align}
&\stackrel{(b)}{=}\lm^2\a_t\left(\frac{\left(\sum^L_{i=1}\sqrt{\frac{4h^2_i\b\ps\pr^i}{2\b\ps \!+\! \nr^i}}\right)^2\frac{n_w}{\a_{t-1}} \!+\! \lm^2\nt(\b,\pr)}{\left(\lm \sqrt{2h^2(1 \!-\! \b\ps)}\!+\!\sum^L_{i=1}\sqrt{\frac{4h^2_i\b\ps\pr^i}{2\b\ps\!+\!\nr^i}\frac{\a_t}{\a_{t-1}}}\right)^2\!+\!\left(\sum^L_{i=1}\sqrt{\frac{4h^2_i\b\ps\pr^i}{2\b\ps\!+\!\nr^i}}\right)^2\frac{n_w}{\a_{t-1}}\!+\!\lm^2 \nt(\b,\pr) }\right) \nonumber \\
&\!+\! n_w \stackrel{(c)}{=}\lm^2\left(\lm^2 k \a_{t-1}+n_w\right)\left(\frac{\left(n_w k_1\right)\frac{1}{\a_{t-1}}+\lm^2\nt(\b,\pr)}{\left(\lm k_2+\sqrt{\frac{k_1}{\lm^2}(\lm^2k+n_w\frac{1}{\a_{t-1}})}\right)^2+\left(n_w k_1\right)\frac{1}{\a_{t-1}}+\lm^2\nt(\b,\pr)}\right)+n_w \nonumber \\
&=\lm^2\left(\lm^2 k \a_{t-1}+n_w\right)\left(\frac{\left(\frac{n_w k_1}{\lm^2}\right)\frac{1}{\a_{t-1}}+\nt(\b,\pr)}{\left(k_2+\sqrt{k_1 k+\frac{n_w k_1}{\lm^2}\frac{1}{\a_{t-1}}}\right)^2+\left(\frac{n_w k_1}{\lm^2}\right)\frac{1}{\a_{t-1}}+\nt(\b,\pr)}\right)+n_w, \label{eq:noisyP_varSecondPhase}
\end{align}
where $(a)$ follows from $\ex[X_t\hat{X}_t]=\ex[\hat{X}^2_t]=\frac{\left(\lm L_1+L_2\right)^2\a_t}{\left(\lm L_1+L_2\right)^2\a_t+L^2_2n_w+\lm^2 \nt(\b,\pr)}$; $(b)$ follows by substituting the values of $L_1$ and $L_2$; and $(c)$ by substituting $\frac{\a_t}{\a_{t-1}}$ using (\ref{eq:noisyP_varFirstPhase}) and by defining $k:=\frac{\n}{2h^2\b\ps+\n}$, $k_1:=\left(\sum^L_{i=1}\sqrt{\frac{4h^2_i\b\ps\pr^i}{2\b\ps+\nr^i}}\right)^2$, $k_2:=\sqrt{2h^2(1-\b\ps)}$.

We want to find the values of the parameter $\lm$ for which the second moment of the state remains bounded. Rewriting (\ref{eq:noisyP_varFirstPhase}) and (\ref{eq:noisyP_varSecondPhase}), the variance of the state at any time $t$ is given by
\begin{align}
\a_t&=\lm^2\left(\lm^2 k \a_{t-2}+n_w\right)\underbrace{\left(\frac{\left(\frac{n_w k_1}{\lm^2}\right)\frac{1}{\a_{t-2}}+\nt(\b,\pr)}{\left(k_2+\sqrt{k_1 k+\frac{n_w k_1}{\lm^2}\frac{1}{\a_{t-2}}}\right)^2+\left(\frac{n_w k_1}{\lm^2}\right)\frac{1}{\a_{t-2}}+\nt(\b,\pr)}\right)}_{\triangleq f(\a_{t-2})}+n_w \nonumber \\
&=\lm^2\left(\lm^2 k \a_{t-2}+n_w\right) f(\a_{t-2})+ n_w, \qquad t=3,5,7,... \label{eq:varSecondPhase_Gen} \\
\a_t&=\lm^2\left(\frac{N}{2h^2\b\ps+N}\right)\a_{t-1}+n_w , \qquad t=2,4,6,... \label{eq:varFirstPhase_Gen}
\end{align}
%
%
where $\a_{1}=\frac{\lm^2 N}{h^2\ps}\a_{0}+n_w$. If the odd indexed sub-sequence $\{\a_{2t+1}\}$ in (\ref{eq:varSecondPhase_Gen}) is bounded, then
the even indexed sub-sequence $\{\a_{2t}\}$ in (\ref{eq:varFirstPhase_Gen}) is also bounded. Thus it is sufficient to consider the odd indexed sub-sequence $\{\a_{2t+1}\}$. We will now construct a sequence $\{\a'_{t}\}$ which upper bounds the sub-sequence $\{\a_{2t+1}\}$. Then we will derive conditions on the system parameter $\lm$ for which the sequence $\{\a'_{t}\}$ stays bounded and consequently the boundedness of $\{\a_{2t+1}\}$ will be guaranteed. In order to construct the upper sequence $\{\a'_{t}\}$, we work on the term $f(\a_{t-2})$ in \eqref{eq:varSecondPhase_Gen} and make use of the following lemma.
\begin{lemma}(\cite[Lemma 4.1]{zaidiACC11}) \label{lm:halfDuplex}
Consider a function $f(x)=\frac{a+\frac{b}{x}}{(c+\sqrt{d+\frac{b}{x}})^2+a+\frac{b}{x}}$ defined over the interval $[0,\infty)$, where $0\leq a,b,c,d < \infty$. The function $f(x)$ can be upper bounded as $f(x)\leq f_{\infty}+\frac{m}{x}$ for some $0<m<\infty$, where $f_{\infty}:=\lim_{x\rightarrow\infty}f(x)=\frac{a}{(c+\sqrt{d})^2+a}$.
\end{lemma}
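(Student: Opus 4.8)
The plan is to reduce the claimed bound to a statement about a single non-negative variable and then control the gap between $f$ and its limiting value $f_\infty$ by an explicit linear term in that variable. First I would substitute $u:=b/x\ge 0$, so that $f(x)=g(u)$ with $g(u):=\frac{a+u}{D(u)}$ and $D(u):=(c+\sqrt{d+u})^2+a+u$. Since $D(u)\ge D(0)=(c+\sqrt d)^2+a$, and $D(0)>0$ (the implicit non-degeneracy condition needed for $f_\infty$ to be defined, which always holds in our application where $a=\nt(\b,\pr)\ge\nd>0$), the map $g$ is continuous on $[0,\infty)$, takes values in $[0,1]$, and $g(0)=\frac{a}{(c+\sqrt d)^2+a}=f_\infty=\lim_{x\to\infty}f(x)$. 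Thus the claim $f(x)\le f_\infty+\frac{m}{x}$ is equivalent to $g(u)\le g(0)+\frac{m}{b}\,u$ for all $u>0$.

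Next I would compute the increment $g(u)-g(0)$ exactly. Writing $g(u)-g(0)=\frac{(a+u)D(0)-aD(u)}{D(u)D(0)}$ and expanding the numerator as $u\,D(0)-a\bigl(D(u)-D(0)\bigr)$, the identity $\sqrt{d+u}-\sqrt d=\frac{u}{\sqrt{d+u}+\sqrt d}$ gives, for $u>0$,
\begin{align}
D(u)-D(0)=u\left(2+\frac{2c}{\sqrt{d+u}+\sqrt d}\right),\nonumber
\end{align}
and hence
\begin{align}
g(u)-g(0)=\frac{u\Bigl[(c+\sqrt d)^2-a-\tfrac{2ac}{\sqrt{d+u}+\sqrt d}\Bigr]}{D(u)\,D(0)}.\nonumber
\end{align}
Because $a,c\ge 0$, the term $-\tfrac{2ac}{\sqrt{d+u}+\sqrt d}$ is non-positive, and $D(u)\ge D(0)$, so the right-hand side is at most $\frac{\bigl((c+\sqrt d)^2-a\bigr)^{+}}{D(0)^2}\,u$, where $(\cdot)^{+}$ denotes the positive part. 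Taking $m:=\dfrac{b\bigl((c+\sqrt d)^2-a\bigr)^{+}}{\bigl((c+\sqrt d)^2+a\bigr)^2}$ — and replacing it by an arbitrary strictly positive constant in the degenerate cases $b=0$ or $(c+\sqrt d)^2\le a$, where already $g(u)\le g(0)$ — yields $f(x)=g(b/x)\le f_\infty+\frac{m}{x}$ for every $x>0$, with $0<m<\infty$, as claimed.

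The one delicate point, and the step I would flag as the main obstacle, is that a naive first-order Taylor estimate around $u=0$ fails: when $d=0$ and $ac>0$ one has $g'(0^{+})=-\infty$, so $g$ is not Lipschitz at the origin. The factorization above sidesteps this by retaining the exact form of $D(u)-D(0)$ and discarding only the term $-\tfrac{2ac}{\sqrt{d+u}+\sqrt d}$, which is dropped in the sign-safe direction, leaving the finite constant $(c+\sqrt d)^2-a$; no separate argument for small $x$ (large $1/x$) is then needed, since all boundary behaviour is absorbed into the positive-part and ``any positive constant'' clauses. Finally, applying this bound with $a=\nt(\b,\pr)$, $b=\tfrac{n_w k_1}{\lm^2}$, $c=k_2$, $d=k_1k$, and $x=\a_{t-2}$ turns the recursion \eqref{eq:varSecondPhase_Gen} into one that can be analysed for boundedness of $\{\a_{2t+1}\}$.
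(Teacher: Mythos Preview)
Your argument is correct. The paper does not actually prove this lemma here; it is quoted from \cite[Lemma~4.1]{zaidiACC11} and used as a black box in the proof of Theorem~\ref{thm:HalfDup}, so there is no in-paper proof to compare against. Your substitution $u=b/x$, the exact factorization
\[
g(u)-g(0)=\frac{u\bigl[(c+\sqrt d)^2-a-\tfrac{2ac}{\sqrt{d+u}+\sqrt d}\bigr]}{D(u)\,D(0)},
\]
and the resulting explicit constant $m=\dfrac{b\bigl((c+\sqrt d)^2-a\bigr)^{+}}{\bigl((c+\sqrt d)^2+a\bigr)^2}$ are all sound, including your handling of the degenerate cases $b=0$ or $(c+\sqrt d)^2\le a$ and the $d=0$ non-Lipschitz corner. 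The only implicit assumption you rely on is $D(0)>0$, i.e.\ $(c+\sqrt d)^2+a>0$, which is precisely what is needed for $f_\infty$ to be well defined and which holds in the application since $a=\nt(\b,\pr)\ge\nd>0$.
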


Starting from \eqref{eq:varSecondPhase_Gen} and by using the above lemma, we write the following series of inequalities
\begin{align}\label{eq:uppSeqIneqHalfDup}
&\a_t\!=\!\lm^2\left(\lm^2 k \a_{t-2}\!+\!n_w\right) f(\a_{t-2})\!+\!n_w\stackrel{(a)}{\leq}\lm^2\left(\lm^2 k \a_{t-2}\!+\!n_w\right)\left(f_{\infty}\!+\!\frac{m}{\a_{t-2}}\right)\!+\!n_w \!=\!\lm^4kf_{\infty}\a_{t-2}\!+\! \nonumber \\
&\frac{\lm^2n_w m}{\a_{t-2}}\!+\!\lm^2 n_w f_{\infty}\!+\!\lm^4 m k+n_w\stackrel{(b)}{\leq}\lm^4kf_{\infty}\a_{t-2}\!+\!\lm^2 m\!+\! \lm^2 n_w f_{\infty}\!+\!\lm^4 m k+n_w =:g(\a_{t-2}),
\end{align}
where $(a)$ follows from Lemma \ref{lm:halfDuplex} and $f_{\infty}=\lim_{\a\rightarrow\infty}f(\a)=\left(\frac{\nt(\b,\pr)}{(k_2+\sqrt{k_1 k})^2+\nt(\b, \pr)}\right)$; and $(b)$ follows from the fact that $\a_t\geq n_w$ for all $t$ according to \eqref{eq:varFirstPhase_Gen} and \eqref{eq:varSecondPhase_Gen}. Since $g(\a)$ in \eqref{eq:uppSeqIneqHalfDup} is a linearly increasing function, it can be used to construct the sequence $\{\a'_t\}$, which upper bounds the odd indexed sub-sequence $\{\a_{2t+1}\}$ given in \eqref{eq:varSecondPhase_Gen}. We construct the sequence $\{\a'_t\}$ for all $t\geq 1$ as
\begin{align}\label{eq:upperSeqHalfDuplex}
&\a_{2t+1}\leq \a'_{t+1}:=g(\a'_t)\stackrel{(a)}{=}\lm^4kf_{\infty}\a'_{t}+\lm^2 m+ \lm^2 n_w f_{\infty}+\lm^4 m k+n_w  \nonumber \\
&\stackrel{(b)}{=}\left(\lm^4kf_{\infty}\right)^t\a'_{0}+(\lm^2 m+ \lm^2 n_w f_{\infty}+\lm^4 m k+n_w)\sum^{t-1}_{i=0}\left(\lm^4kf_{\infty}\right)^i,
\end{align}
where ($a$) follows from \eqref{eq:uppSeqIneqHalfDup} and ($b$) follows by recursively apply ($a$).

We observe from (\ref{eq:upperSeqHalfDuplex}) that if $\left(\lm^4kf_{\infty}\right)=\left(\frac{\lm^4 k\nt(\b,\pr)}{(k_2+\sqrt{k_1 k})^2+\nt(\b, \pr)}\right)<1$, then the sequence $\{\a'_t\}$ converges as $t\rightarrow\infty$ and consequently the original sequence $\{\a_t\}$ is guaranteed to stay bounded. Thus the system in (\ref{eq:stateEquation}) can be mean square stabilized if
\begin{align}
&\lm^4<\left(\frac{(k_2+\sqrt{k_1 k})^2+\nt(\b, \{\pr^i\}^L_{i=1})}{k\nt(\b,\{\pr^i\}^L_{i=1})}\right) \label{eq:condHalfDup} \\
&\Rightarrow\log\left(\lm\right)\!<\!\frac{1}{4}\left(\log\left(1+\frac{2h^2\b\ps}{\nd}\right)+\log\left(1+\frac{\tilde{M}(\b,\{\pr^i\}^L_{i=1})}{\nt(\b,\{\pr^i\}^L_{i=1})}\right)\right), \label{eq:suffCondIntHalfDup}
\end{align}
where the last equality follows from $k=\frac{\n}{2h^2\b\ps+\n}$ and $M(\b,\{\pr^i\}^L_{i=1}):=(k_2+\sqrt{k_1 k})^2$. Since the relay nodes amplify the desired signal as well as the noise, which is then superimposed at the decoder to the signal coming directly from the state encoder, the optimal choice of the transmit powers $\{\pr^i\}^L_{i=1}:\sum^L_{i=1}\pr^i\leq\pra$ depends on the parameters $\{\ps,\{\nr^i\}^L_{i=1},\nd,h,h_i,\b\}$. Moreover, the optimal choice of the power allocation factor $\b$ at the state encoder also depends on these parameters. Therefore, we rewrite (\ref{eq:suffCondIntHalfDup}) as \eqref{eq:thmHalfDup},
which completes the proof.\hfill $\square$

\subsection{Remark \ref{rem:infoRate} on Information Rate}\label{appendixAchievability}
The given scheme can be seen as a point-point communication channel, where $R_{2t\sm1}$ is the channel output corresponding to the input $S_{e,2t\sm1}$ and $I_{2t}$ is the channel output corresponding to the input $S_{e,2t}$ for $t=1,2,3,\dots$. The information rate
 is given by
\begin{align} \label{eq:infoRate}
&\lim_{T\rightarrow\infty}\frac{1}{2T}I\left(\left\{S_{e,2t\sm1}\right\}^{T}_{t=1},\left\{S_{e,2t}\right\}^{T}_{t=1}; \left\{R_{2t\sm1}\right\}^{T}_{t=1},\left\{I_{2t}\right\}^{T}_{t=1}\right)\nonumber \\
&=\lim_{T\rightarrow\infty}\frac{1}{2T}\bigg[h\left(\left\{R_{2t\sm1}\right\}^{T}_{t=1},\left\{I_{2t}\right\}^{T}_{t=1}\right)-h\left(\left\{R_{2t\sm1}\right\}^{T}_{t=1},\left\{I_{2t}\right\}^{T}_{t=1}|\left\{S_{e,2t\sm1}\right\}^{T}_{t=1},\left\{S_{e,2t}\right\}^{T}_{t=1}\right)\bigg] \nonumber \\
&\stackrel{(a)}{=}\lim_{T\rightarrow\infty}\frac{1}{2T}\bigg[\sum^{T}_{t=1}\left(h\left(R_{2t\sm1}\right)+h\left(I_{2t}\right)-h\left(R_{2t\sm1}|S_{e,2t\sm1}\right)-h\left(I_{2t}|S_{e,2t}\right)\right) \bigg]\nonumber \\
&\stackrel{(b)}{=}\lim_{T\rightarrow\infty}\frac{1}{2T}\bigg[T\big( h\left(R_{2t\sm1}\right)+ h\left(I_{2t}\right)-h\left(R_{2t\sm1}|S_{e,2t\sm1}\right)-h\left(I_{2t}|S_{e,2t}\right)\big)\bigg] \nonumber \\
&=\frac{1}{2}\left(I\left(S_{e,2t\sm1};R_{2t\sm1}\right)+I\left(S_{e,2t};I_{2t}\right)\right),
\end{align}
where $(a)$ follows from the fact that $P(I_{2t},R_{2t\sm1}|S_{e,2t},S_{e,2t\sm1})=P(I_{2t}|S_{e,2t})P(R_{2t\sm1}|S_{e,2t\sm1})$, the channel is memoryless, the random variables are Gaussian and $\ex[R_{2l\sm1}R_{2k\sm1}]=\ex[I_{2l}I_{2k}]=0$ for $k\neq l$, and $\ex[R_{2l\sm1}I_{2k}]=0$ for all $l,k=1,2,3,..$; and
$(b)$ follows from the fact that $R_{2t\sm1}$ and $I_{2t}$ are both sequences of i.i.d. variables. For the first transmission phase the mutual information between the transmitted variable and the received variable is given by
\begin{align} \label{eq:mutualFirst}
I\left(S_{e,2t\sm1};R_{2t\sm1}\right)\!=\!h(R_{2t\sm1})\!-\!h(R_{2t\sm1}|S_{e,2t\sm1})\!=\!h(R_{2t\sm1})\!-\!h(Z_{2t\sm1})\!\stackrel{(a)}{=}\!\frac{1}{2}\log{\left(1\!+\!\frac{2h^2\b\ps}{\n}\right)},
\end{align}
where $(a)$ follows from $R_{2t\sm1}\sim\mathcal{N}(0,2h^2\b\ps+\n)$ and $Z_{2t\sm1}\sim\mathcal{N}(0,\n)$.
In the second phase the decoder computes the innovation $I_t$ according to (\ref{eq:innovation}). The mutual information between the transmitted variable and the innovation variable is then given by
\begin{align}\label{eq:mutualSecond}
I\left(S_{e,2t};I_{2t}\right)&=h(I_{2t})-h(I_{2t}|S_{e,2t})=h(I_{2t})-h(\tilde{Z}_{2t})\stackrel{(a)}{=}\frac{1}{2}\log{\left(1+\frac{\tilde{M}(\b,\pr)}{\nt(\b,\pr)}\right)},
\end{align}
where $(a)$ follows from $I_{2t}\sim\mathcal{N}(0,\tilde{M}(\b,\pr)+\nt(\b,\pr))$ and $\tilde{Z}_{2t}\sim\mathcal{N}(0,\nt(\b,\pr))$. From \eqref{eq:mutualFirst}, \eqref{eq:mutualSecond}, and \eqref{eq:infoRate} the corresponding information rate is equal to
\begin{align}
\frac{1}{4}\left(\log{\left(1+\frac{2h^2\b\ps}{\n}\right)}+\log{\left(1+\frac{\tilde{M}(\b,\pr)}{\nt(\b,\pr)}\right)}\right).
\end{align}
For the given channel, the directed information rate is equal to information rate due to mutual independence of the channel output sequence \cite[Theorem 2]{Massey90}.

\subsection{Proof of Lemma \ref{lm:vectorConverge}}\label{apx:MultiDim}
Assume that $T(x)$ is a non-decreasing mapping with a unique fixed point $x^\star$. Further assume that there exist $u\leq x^\star \leq v$ such that $T(u)\geq u$ and $T(v) \leq v$. Consider a sequence generated by the following iterations: $x_{t+1}=T(x_t)$ with $t\in\mathbb{N}$, $x_0\in\mathbb{R}$. We want to show that starting from any $x_0\in\mathbb{R}$, the sequence $\{x_t\}$ converges. There are three possibilities: i) $x_0=x^\star$, ii) $x_0>x^\star$, and iii) $x_0<x^\star$. For $x_0\in[x^\star,\infty)$ we have $T(x)\leq x$, therefore $x_1=T(x_0)\leq x_0$. Since $T(x)$ is non-decreasing, $x_2=T(x_1)\leq T(x_0)= x_1$. Thus for any $t\in\mathbb{N}$ we have $x_{t+1}=T(x_t)\leq T(x_{t-1})=x_t$. Further this sequence is lower bounded by $x^\star$ because for any $x_t\in [x^\star,\infty)$, $x^\star=T(x^\star)\leq T(x_t)=x_{t+1}$ due to non-decreasing $T(\cdot)$. Thus the sequence $\{x_t\}$ converges since it is monotonically decreasing and lower bounded by $x^\star$\cite[Theorem 3.14]{RudinBook}. For $x\in(-\infty,x^\star]$ we have $T(x)\geq x$, therefore $x_1=T(x_0)\leq x_0$. Since $T(x)$ is non-decreasing, we have $x_2=T(x_1)\geq T(x_0)= x_1$. Thus for any $t\in\mathbb{N}$ we have $x_{t+1}=T(x_t)\geq T(x_{t-1})=x_t$. Further this sequence is upper bounded by $x^\star$ because for any $x_t\in (-\infty,x^\star]$, we have $x_{t+1}=T(x_t)\leq T(x^\star) = x^\star$ due to non-decreasing $T(\cdot)$.  Since $\{x_t\}$ is strictly increasing and upper bounded by $x^\star$ for $x_0\in[x^\star,\infty)$, it converges \cite[Theorem 3.14]{RudinBook}. \hfill $\square$

\bibliographystyle{IEEEtran}
\bibliography{Feb8}

\end{document}